\newtheorem{proposition}{Proposition}%[section]
\newtheorem{thm}{Theorem}[section]
\newtheorem{lem}[thm]{Lemma}
\newtheorem{mydef}{Definition}[section]
\newtheorem{rem}{Remark}[section]
\newcommand{\bFormula}[1]{
%\bigskip {\it Formule #1} \bigskip
\begin{equation} \label{#1}}
\newcommand{\eF}{\end{equation}}
\newcommand{\vu}{{\bf u}}
\newcommand{\vx}{{\bf x}}
\newcommand{\vphi}{{\boldsymbol \varphi}}
 \def\R{\mathbb{R}}
  \def\N{\mathbb{N}}
\newcommand\bb[1]{\mathbf{#1}}
\newtheorem*{lem*}{Lemma}
\newcommand{\eqnb}{\begin{equation}}
\newcommand{\eqne}{\end{equation}}
\subjclass[2020]{35M13, 35D30, 74A15, 80A17}
\title{Global weak solutions in nonlinear 3D thermoelasticity}
\author{Tomasz Cie\'{s}lak}
\address{Institute of Mathematics, Polish Academy of Sciences}
\email{cieslak@impan.pl}
\author{Boris Muha}
\address{Department of Mathematics, Faculty of Science, University of Zagreb}
\email{borism@math.hr}
\author{Sr\dj{}an Trifunovi\'{c}}
\address{Department for Mathematics and Informatics, University of Novi Sad}
\email{srdjan.trifunovic@dmi.uns.ac.rs}
\begin{document}
\sloppy

%\Large{}
\maketitle
\thispagestyle{empty}

\begin{abstract}
Here we study a nonlinear thermoelasticity hyperbolic-parabolic system describing the balance of momentum and internal energy of a heat-conducting elastic body, preserving the positivity of temperature. So far, no global existence results in such a natural case were available. Our result is obtained by using thermodynamically justified variables which allow us to obtain an equivalent system in which the internal energy balance is replaced with entropy balance. For this system, a concept of weak solution with defect measure is introduced, which satisfies entropy inequality instead of balance and has a positive temperature almost everywhere. Then, the global existence, consistency and weak-strong uniqueness are shown in the cases where heat capacity and heat conductivity are both either constant or non-constant. Let us point out that this is the first result concerning global existence for large initial data in nonlinear thermoelasticity where the model is in full accordance with the laws of thermodynamics.
\end{abstract}

\section{Introduction and main result}\label{Intro}

Let $\Omega\in \R^3$ be a bounded Lipschitz domain. We study the physical model of a heat-conducting elastic body described by two state variables -- the displacement $\textbf{u}:[0,T)\times \Omega\rightarrow \R^3$ and temperature $\theta:[0,T)\times \Omega\rightarrow \R$. The two governing equations are the balance of momentum and internal energy. In this paper, we will focus on the following two models, physically viable for high and low temperatures, respectively. Their derivation is given in section \ref{Phys}.

\subsection{The model with constant heat capacity and heat conductivity}
We deal with the following problem:
\begin{equation}\label{system0}
\begin{cases}
\mathbf{u}_{tt} = \nabla \cdot ( \nabla\mathbf{u}   -\mu \theta I),& \quad \text{ in } (0,T)\times \Omega,\\
\theta_t - \Delta \theta =- \mu \theta \nabla\cdot \mathbf{u}_t,& \quad \text{ in }  (0,T)\times\Omega,\\
\mathbf{u} = 0, \partial_n\theta=0,& \quad  \mbox{on } (0,T)\times\partial \Omega,\\
\mathbf{u}(0,\cdot)=\mathbf{u}_0,~ \mathbf{u}_t(0,\cdot) = \mathbf{v}_0,~ \theta(0,\cdot)=\theta_0>0,&
\end{cases}
\end{equation}
where constant $\mu>0$ is a given quantity.

Such a system arises as a simplest nonlinear thermoelasticity model. So far, the problem of global existence of solutions has not been solved for this system.% In section \ref{compar} we recall results existing in literature. Now, let us describe the main result of the paper.

First of all, let us notice that the equation $\eqref{system0}_2$ can formally be divided by $\theta>0$ to obtain the entropy balance
\begin{equation}
\left(\ln \theta+\mu\nabla\cdot \mathbf{u}\right)_t-\Delta \ln \theta=|\nabla \ln \theta|^2\;\;\mbox{on}\;\; (0,T)\times\Omega. \label{ent:eq1}
\end{equation}
Next, multiplying $\eqref{system0}_1$ with $\mathbf{u}_t$, summing up with $\eqref{system0}_2$ and integrating over $(0,t)\times\Omega$ for $t\in (0,T]$ gives the total energy balance
\begin{eqnarray*}%%%%%%%%%%%%%
    &&\int_\Omega \theta(t)+ \frac12 \int_\Omega |\mathbf{u}_t|^2(t) + \frac12 \int_\Omega |\nabla\mathbf{u} |^2(t)\nonumber \\
   && = \int_\Omega \theta_0+ \frac12 \int_\Omega |\mathbf{v}_0|^2 + \frac12 \int_\Omega |\nabla\mathbf{u}_0 |^2,
\end{eqnarray*}%%----------------------------%%
while subtracting $\eqref{ent:eq1}$ integrated over $(0,t)\times\Omega$ from the total energy balance gives rise to the total dissipation balance
\begin{eqnarray}\label{total:dissipation:balance:lin}
     &&\int_\Omega (\theta-\ln\theta)(t) +  \frac12 \int_\Omega |\mathbf{u}_t|^2(t) + \frac12 \int_\Omega |\nabla\mathbf{u} |^2(t)+ \int_0^t\int_\Omega |\nabla {\ln\theta} |^2 \nonumber \\
     &&= \int_\Omega (\theta_0 - \ln\theta_0)+\frac12 \int_\Omega |\mathbf{v}_0|^2+ \frac12 \int_\Omega |\nabla \mathbf{u}_0|^2.
\end{eqnarray}%%----------------------------%%
Hence, denoting $\tau:=\ln\theta$, we introduce the following definition of weak solution:
\noindent
\begin{mydef}\label{weaksolution}
We say that $(\mathbf{u},\tau)$ is a weak solution to the problem $(\ref{system0})$ with defect measure if:
\begin{itemize}
    \item The initial data is of regularity
    \begin{eqnarray*}%%%%%%%%%%%%%
    \mathbf{u}_0\in H_0^1(\Omega),\quad \mathbf{v}_0\in L^2(\Omega),\quad
    \tau_0,e^{\tau_0} \in L^1(\Omega).
\end{eqnarray*}%%----------------------------%%
    Moreover\footnote{In Appendix, it is proved that $\bb{u}_t \in C_w(0,T; L^2(\Omega))$ for any weak solution.}, $\mathbf{v}_0$ is attained in $C_w(0,T; L^2(\Omega))$ and $\lim\limits_{t\to 0}\int_\Omega{\tau}(t)\geq \int_{\Omega}{\tau}_0$;
    \item $\mathbf{u} \in L^\infty(0,T; H_0^1(\Omega))\cap W^{1,\infty}(0,T;L^2(\Omega))$,\\
    $\tau\in L^\infty(0,T; L^1(\Omega))\cap L^2(0,T; H^1(\Omega))$,\\
    $e^\tau \in L^\infty(0,T;L^1(\Omega))$;
    \item The momentum equation
    \begin{eqnarray}%%%%%%%%%%%%%
        &&\int_0^T\int_\Omega \mathbf{u}_t \cdot \boldsymbol\varphi_t - \int_0^T\int_\Omega \nabla\mathbf{u} : \nabla \boldsymbol\varphi + \mu\int_0^T\left\langle \theta, \nabla\cdot\boldsymbol\varphi \right\rangle_{[\mathcal{M}^+;C](\overline{\Omega})}\nonumber\\
        &&=-\int_\Omega \mathbf{v}_0\cdot \boldsymbol\varphi, \label{momweak}
    \end{eqnarray}%%----------------------------%%
holds for all test function $\boldsymbol\varphi \in C_0^\infty([0,T)\times \Omega)$, where $\theta \in L^\infty(0,T;\mathcal{M}^+(\overline{\Omega}))$ satisfies
\begin{eqnarray*}
d\theta = e^\tau dx+f,
\end{eqnarray*}
and $f\geq 0$ is a singular part of $\theta$ supported on a set of measure zero;
\item The entropy equation
\begin{eqnarray}\label{entweak}%%%%%%%%%%%%%
   &&\int_0^T \int_\Omega \tau \phi_t-\int_0^T \int_\Omega \nabla \tau \cdot \nabla \phi + \mu\int_0^T \int_\Omega \mathbf{u}_t \cdot \nabla \phi + \left\langle\boldsymbol\sigma, \phi \right\rangle_{[\mathcal{M}^+;C]([0,T]\times \overline{\Omega})} \nonumber \\
   &&= -\int_\Omega \tau_0 \phi_0, 
\end{eqnarray}%%----------------------------%%
holds for all $\phi \in C_0^\infty([0,T)\times \Omega)$, where $\boldsymbol\sigma \in \mathcal{M}^+([0,T]\times \overline{\Omega})$ satisfies
\begin{eqnarray*}
    \boldsymbol\sigma\geq |\nabla\tau|^2;
\end{eqnarray*}
\item The energy inequality
\begin{eqnarray}\label{enweak}%%%%%%%%%%%%%
   &&\int_\Omega d\theta(t)+ \frac12 \int_\Omega |\mathbf{u}_t|^2(t) + \frac12 \int_\Omega |\nabla\mathbf{u} |^2(t) \nonumber \\
   &&\leq \int_\Omega e^{\tau_0}+ \frac12 \int_\Omega |\mathbf{v}_0|^2 + \frac12 \int_\Omega |\nabla\mathbf{u}_0 |^2,
\end{eqnarray}%%----------------------------%%
holds for all $t\in (0,T]$.
\end{itemize}
\end{mydef}

\begin{rem}\label{RemarkWS}
(1) In this definition, the internal energy equation $\eqref{system0}_2$ is replaced with the entropy equation $\eqref{ent:eq1}$. Let us point out the key ideas behind this. First, the nonlinearity $- \mu \theta \nabla\cdot \mathbf{u}_t$ in $\eqref{system0}_2$ is not expected to be even integrable for weak solutions, and the compactness for such a term seems unreachable even in 1D (see \cite{CGM}). The entropy equation provides us with some additional information, in particular the total dissipation identity (inequality) $\eqref{total:dissipation:balance:lin}$, which gives us the estimates of $\ln\theta$ and $\nabla \ln \theta$ and thus ensures the positivity of $\theta$. It also plays a crucial role in the proof of weak-strong uniqueness. Finally, the measure $\boldsymbol\sigma$ appears since the nonlinear term $|\nabla \ln \theta|^2$ is only bounded $L_{t,x}^1$ and therefore we cannot rule out concentrations which would give rise to a defect measure. \\
(2) In the above definition, the unknown functions are the displacement $\mathbf{u}$ and the thermal entropy $\tau=\ln\theta$. However, this identity, due to lack of uniform integrability of $\theta$, might not hold on a set of measure zero, which is an issue since $\theta$ is only a measure. Whether or not this integrability can be obtained is an interesting and important problem.\\
(3) Such a concept of a weak solution was inspired by the work of Feireisl and Novotny \cite{FN}, where heat-conducting compressible fluids governed by the full Navier-Stokes-Fourier system are studied. However, the idea of weak solutions with defect measures dates back to work of DiPerna and Lions \cite{DL} and Alexandre and Villani \cite{AV}, where the concept of weak solutions with defect measure was studied in the context of the Boltzmann equation. %In the entropy inequality $\eqref{entweak}$ given here, the defect measure is a non-negative Borel measure which measures how much the inequality differs from equality and it can be directly calculated by taking the difference of RHS and LHS.
\\
(4) The concept of weak thermodynamical laws, presented in the form of inequalities rather than equalities, is widely discussed in mathematical literature \cite{FN}. Interested readers are also referred to \cite[Section 4]{serrin96}, where the implications of assuming inequalities in the first laws are explored. It is demonstrated that these inequalities imply the Clausius-Duhem inequality and an energy balance equation. In the author's words, this approach ``in many ways implies equivalent or only modestly different results''.
\end{rem}

The main result for this model concerns global-in-time existence of solutions to (\ref{system0}) in the sense of the above definition.
Moreover, we show that solutions satisfying the above definition are consistent, i.e. if a weak solution satisfies certain additional regularity property, then it solves (\ref{system0})
pointwise. Next, we show that in case a classical solution starting from initial data $(\theta_0, \mathbf{u}_0, \mathbf{v}_0)$ exists, then our weak solution and classical solution coincide. To be more precise, the following theorem holds.

\begin{thm}[\textbf{Main result I}]\label{main:lin}
Let $\mathbf{u}_0\in H_0^1(\Omega)$, $\mathbf{v}_0\in L^2(\Omega)$, $\tau_0 \in L^1(\Omega)$ with $e^{\tau_0} \in L^1(\Omega)$. Then, one has the following:
\begin{enumerate}
    \item[] \textbf{Global existence}. There exists a global weak solution $(\mathbf{u},\tau)$ in the sense of Definition \ref{weaksolution};
    \item[] \textbf{Consistency}. If a weak solution in the sense of Definition \ref{weaksolution} is smooth, then the solution solves (\ref{system0}) pointwise;
    \item[] \textbf{Weak-strong uniqueness}. Let $(\mathbf{u},\tau)$ be a weak solution in the sense of Definition \ref{weaksolution}. Then, it satisfies the relative entropy inequality $\eqref{relent}$. Moreover, if $(\tilde{\mathbf{u}},\tilde{\tau})$ is a classical solution to the problem $(\ref{system0})$ and
\begin{eqnarray*}%%%%%%%%%%%%%
    \tau_0=\tilde{\tau}_0, \qquad \mathbf{u}_0 = \tilde{\mathbf{u}}_0,\qquad \mathbf{v}_0 = \tilde{\mathbf{v}}_0,
\end{eqnarray*}%%----------------------------%%
then
\begin{eqnarray*}%%%%%%%%%%%%%
    \mathbf{u}\equiv \tilde{\mathbf{u}}, \qquad \tau\equiv \tilde{\tau}.
\end{eqnarray*}%%----------------------------%%
\end{enumerate}
\end{thm}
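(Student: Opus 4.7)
My plan is to construct an approximating sequence and pass to the limit using the a priori estimates derived in the introduction. A natural scheme is to work with a regularized (and positivity-preserving) version of the internal energy equation $\eqref{system0}_2$, for instance written in the variable $\tau^\ve=\ln(\theta^\ve+\ve)$ or by adding an $\ve$-regularization of $\theta^\ve\,\nabla\cdot\mathbf{u}_t$ that is dominated by the dissipation $|\nabla\tau^\ve|^2$, coupled with a Galerkin (or Faedo-Galerkin) scheme for the momentum equation. The two balances to enforce at the approximate level are the total energy identity and the total dissipation identity \eqref{total:dissipation:balance:lin}, which together yield uniform bounds
\[
\mathbf{u}^\ve \in L^\infty(0,T;H^1_0),\quad \mathbf{u}^\ve_t \in L^\infty(0,T;L^2),\quad \theta^\ve \in L^\infty(0,T;L^1),\quad \tau^\ve \in L^\infty(0,T;L^1)\cap L^2(0,T;H^1),
\]
as well as control of $\nabla\tau^\ve$ in $L^2_{t,x}$ guaranteeing $\theta^\ve>0$ almost everywhere in the limit.

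\textbf{Passage to the limit and emergence of the defect measures.} From the momentum equation I obtain uniform bounds on $\mathbf{u}^\ve_{tt}$ in a negative Sobolev space, which by Aubin-Lions gives strong compactness of $\mathbf{u}^\ve$ in $L^2(0,T;L^2)$ and weak compactness of $\mathbf{u}^\ve_t$ in $C_w(0,T;L^2)$. Similarly, $\tau^\ve \to \tau$ strongly in $L^2(0,T;L^2)$ by the uniform $H^1$-estimate plus equi-integrability in time. The two terms that resist pointwise identification in the limit are exactly the ones justifying the measure-valued formulation: the nonlinear pressure-like term $\mu\theta^\ve\nabla\cdot\boldsymbol\varphi$ has only an $L^\infty_t L^1_x$ bound, so $\theta^\ve\rightharpoonup^*\theta$ in $L^\infty(0,T;\mathcal{M}^+(\overline\Omega))$ and a possible concentration is absorbed into the singular part $f$; and the entropy production $|\nabla\tau^\ve|^2$ is only $L^1_{t,x}$, so it converges weakly-$*$ in $\mathcal{M}^+$ to a measure $\boldsymbol\sigma$ satisfying $\boldsymbol\sigma \geq |\nabla\tau|^2$ by weak lower semicontinuity. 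The energy inequality \eqref{enweak} follows from passing to the limit in the approximate energy identity and lower semicontinuity. I expect the main obstacle here to be the precise choice of regularization that simultaneously (i) preserves the two compatible balances (energy and entropy), (ii) keeps $\theta^\ve$ strictly positive, and (iii) is compatible with the Galerkin truncation; a Feireisl-Novotný-style multi-layer approximation (with parameters $\ve,\delta$ sent to zero successively) is the natural template.

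\textbf{Consistency.} Assuming a weak solution is smooth, integration by parts in \eqref{momweak} and \eqref{entweak} recovers the strong form up to the defect terms. Smoothness of $\tau$ forces $\theta=e^\tau$ to be absolutely continuous, hence the singular part $f$ of $\theta$ vanishes; the equality case in the weak lower-semicontinuity bound $\boldsymbol\sigma\geq|\nabla\tau|^2$, combined with the energy and entropy identities that can now be differentiated in time, forces $\boldsymbol\sigma=|\nabla\tau|^2\,dx\,dt$. One then divides by $\theta>0$ to recover $\eqref{system0}_2$ from \eqref{ent:eq1}.

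\textbf{Weak-strong uniqueness.} I would employ the relative entropy method. Define a functional of the form
\[
\mathcal{R}(\mathbf{u},\tau\,|\,\tilde{\mathbf{u}},\tilde\tau)(t) = \tfrac{1}{2}\!\int_\Omega |\mathbf{u}_t-\tilde{\mathbf{u}}_t|^2 + \tfrac{1}{2}\!\int_\Omega |\nabla(\mathbf{u}-\tilde{\mathbf{u}})|^2 + \int_\Omega \bigl(e^\tau - e^{\tilde\tau} - e^{\tilde\tau}(\tau-\tilde\tau)\bigr)\,dx,
\]
so that the thermal part is a Bregman-type distance built on the convex function $e^\tau$ (consistent with the $(\theta-\ln\theta)$ structure of \eqref{total:dissipation:balance:lin}), and derive $\eqref{relent}$ by combining \eqref{enweak} with \eqref{entweak} tested against the smooth multiplier $-e^{\tilde\tau}$ (hence $-\mu e^{\tilde\tau}$-weighted coupling absorbing the momentum cross-term) and the momentum equation tested against $\tilde{\mathbf{u}}_t$. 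Using the classical equations satisfied by $(\tilde{\mathbf{u}},\tilde\tau)$, the right-hand side is controlled by $\int_0^t \mathcal{R}\,ds$ plus a term involving the defect measure $\boldsymbol\sigma-|\nabla\tau|^2\geq 0$ with the correct sign. Gronwall's inequality then gives $\mathcal{R}\equiv 0$, hence $\mathbf{u}\equiv\tilde{\mathbf{u}}$ and $\tau\equiv\tilde\tau$. The delicate step is to ensure that the cross-terms involving the measures $\theta$ and $\boldsymbol\sigma$ combine with favorable sign; this requires a careful algebraic arrangement modeled on the Feireisl-Novotný treatment of the Navier-Stokes-Fourier system cited in Remark \ref{RemarkWS}(3), and is where I expect the main technical effort to lie.
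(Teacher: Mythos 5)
Your overall strategy coincides with the paper's (Galerkin approximation, energy plus total-dissipation bounds, weak-$*$ limits producing two defect measures, relative entropy for weak-strong uniqueness), but three steps as written either fail or leave the key mechanism unsupplied. First, in the existence part you defer exactly the central difficulty: how the approximation keeps the temperature strictly positive so that $\tau=\ln\theta$ is admissible. For Theorem \ref{main:lin} no $\ve$-regularization or multi-layer scheme is needed: the paper keeps the heat equation at the PDE level and only projects the momentum equation onto $V_n$; since $\mathbf{u}^n$ then lives in a finite-dimensional space, $\nabla\cdot\mathbf{u}^n_t\in L^1(0,T;L^\infty(\Omega))$, and a parabolic comparison argument gives the explicit lower bound $\theta^n\geq\bigl(\min_{\Omega}\theta_0^n\bigr)\exp\bigl(-\mu\int_0^t\|\nabla\cdot\mathbf{u}^n_t\|_{L^\infty}\bigr)>0$. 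This is what licenses dividing the approximate heat equation by $\theta^n$ to obtain the entropy equation before passing to the limit. Second, in the consistency part, smoothness of $\tau$ does not by itself force $\boldsymbol\sigma=|\nabla\tau|^2\,dx\,dt$ through ``the equality case of weak lower semicontinuity'': the inequality $\boldsymbol\sigma\geq|\nabla\tau|^2$ is simply part of the definition, and there is no limiting sequence left to saturate it. The actual mechanism is a contradiction argument: testing the momentum and entropy formulations with $\mathbf{u}\psi_n$ and $\theta\psi_n$ yields a reverse energy inequality forcing equality in \eqref{enweak}, and then any strict excess of entropy production for some nonnegative $\tilde\phi\leq\theta$, combined with $\phi=\theta-\tilde\phi$ in \eqref{entweak}, would produce a strict energy increase contradicting that equality.

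Third, and most consequentially, your relative entropy functional $\mathcal{R}$ omits the singular part $f$ of the temperature measure $\theta=e^\tau dx+f$. The momentum equation \eqref{momweak} couples the test field to the full measure $\theta$, so testing it with $\tilde{\mathbf{u}}_t$ produces the term $-\mu\int_0^t\int_\Omega\nabla\cdot\tilde{\mathbf{u}}_t\,d\theta$, whose singular contribution $-\mu\int_0^t\int_\Omega\nabla\cdot\tilde{\mathbf{u}}_t\,df$ is not controlled by your $\mathcal{R}$, which only sees $e^\tau$. The paper therefore works with a measure-valued relative entropy $d\mathcal{E}(\theta,\tau\,|\,\Theta,\mathcal{T})$ built on $d\theta$ itself, whose total mass dominates both the Bregman distance $e^\tau-\Theta-\Theta(\tau-\mathcal{T})\geq 0$ of Lemma \ref{wlasnosci} and the singular mass $f\geq 0$; only then is the problematic term of the form $-\mu\int_0^t\int_\Omega\nabla\cdot\tilde{\mathbf{u}}_t\,d\mathcal{E}$ bounded by $C\int_0^t\int_\Omega d\mathcal{E}$, and the Gronwall closure yields $f=0$ together with $\tau\equiv\tilde\tau$ and $\mathbf{u}\equiv\tilde{\mathbf{u}}$. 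As written, your Gronwall step does not close.
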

The approach developed here was inspired by the theory of Feireisl and Novotny from \cite{FN}, where the authors deal with viscous compressible heat-conducting fluids.

%However, it's adaptation to thermoelasticity required novel arguments. ({\bf think about this}) %In particular, we would like to emphasize an estimate given in Lemma \ref{reverse:reg:wave} leading to the higher integrability of temperature as a consequence of the (wave) equation $\eqref{system0}_1$, the regularity coming from the energy inequality $\eqref{enweak}$ and the positivity of $\theta$.

\subsection{The model with non-constant heat capacity and heat conductivity}\label{nonl}
%At high temperatures, it is known that many elements have constant heat-conductivity (Dulong–Petit law). However, there are other quantities whose behavior at large temperatures is nonlinear -- the thermal radiation and the heat conductivity. In many solid materials, both thermal radiation (Wien's displacement law) and heat conductivity grow fast until certain point and then start decreasing or become almost constant. Therefore, supposing that temperature will not go over that point, it is reasonable to assume that both thermal radiation and heat conductivity have a superlinear growth. Due to mathematical reasons, we must assume that heat conductivity will not degenerate. Therefore,
The problem we choose to study is the following:

%\begin{enumerate}
 %   \item A superlinear growth of heat at infinity, thus ensuring equiintegrability of $\theta$;
 %   \item Non-constant heat flux which ensures $L_{t,x}^2$ integrability of $\nabla\theta$.
%\end{enumerate}

\begin{eqnarray}\label{system0:nonlin}%%%%%%%%%%%%%
    \begin{cases}
   \mathbf{u}_{tt} = \nabla \cdot ( \nabla\mathbf{u}   -\mu \theta I),& \mbox{in }(0,T)\times\Omega,\\
    (\theta+\theta^{\alpha})_t - \nabla\cdot\big((1+\theta^\beta)\nabla\theta\big) =- \mu \theta \nabla\cdot \mathbf{u}_t ,& \mbox{in }(0,T)\times\Omega,\\
     \mathbf{u} = 0, ~\partial_n\theta=0,& \mbox{on } (0,T)\times\partial \Omega, \\
    \mathbf{u}(0,\cdot)=\mathbf{u}_0,~ \mathbf{u}_t(0,\cdot) = \mathbf{v}_0, ~\theta(0,\cdot)=\theta_0,&
    \end{cases}
\end{eqnarray}%%----------------------------%%
where $\alpha>1$ and $\beta,\mu>0$. The entropy equation reads
\begin{eqnarray}%%%%%%%%%%%%%
     &&\left(\ln\theta+\frac{\alpha}{\alpha-1}\theta^{\alpha-1}+\mu\nabla\cdot \mathbf{u}\right)_t -\nabla \cdot \left(\frac{(1+\theta^\beta)\nabla\theta}{\theta} \right) \nonumber\\
     &&= \frac{(1+\theta^\beta)|\nabla\theta|^2}{\theta^2},
     \label{system:nonlin}
\end{eqnarray}%%----------------------------%%
on $(0,T)\times\Omega$. The total energy balance for this system is now
\begin{eqnarray}\label{en:id:nonlin}%%%%%%%%%%%%%
    &&\int_\Omega( \theta+\theta^\alpha)(t)+ \frac12 \int_\Omega |\mathbf{u}_t|^2(t) + \frac12 \int_\Omega |\nabla\mathbf{u} |^2(t) \nonumber\\
    &&= \int_\Omega (\theta_0+\theta_0^\alpha)+ \frac12 \int_\Omega |\mathbf{v}_0|^2 + \frac12 \int_\Omega |\nabla\mathbf{u}_0 |^2,
\end{eqnarray}%%----------------------------%%
while the total dissipation balance is
\begin{eqnarray}%%%%%%%%%%%%%
     &&\int_\Omega \left(\theta+\theta^{\alpha} -\ln\theta-\frac{\alpha}{\alpha-1}\theta^{\alpha-1}\right) +  \frac12 \int_\Omega |\mathbf{u}_t|^2(t) +\frac12 \int_\Omega |\nabla\mathbf{u} |^2(t)\nonumber\\
     &&\quad + \int_0^t\int_\Omega \frac{(1+\theta^\beta)|\nabla\theta|^2}{\theta^2} \nonumber\\
     &&= \int_\Omega \left(\theta_0+\theta_0^{\alpha} -\ln\theta_0-\frac{\alpha}{\alpha-1}\theta_0^{\alpha-1}\right) + \frac12 \int_\Omega |\mathbf{v}_0|^2+ \frac12 \int_\Omega |\nabla\mathbf{u}_0 |^2. \label{diss:nonlin}
\end{eqnarray}%%----------------------------%%
This leads to the following:
\begin{mydef}\label{weaksolution:nonlin}
We say that $(\mathbf{u},\theta)$ is a weak solution to the problem (\ref{system0:nonlin}) with defect measure if:
\begin{itemize}
    \item The initial data is of regularity
    \begin{eqnarray*}%%%%%%%%%%%%%
    &&\mathbf{u}_0\in H_0^1(\Omega),\quad \mathbf{v}_0\in L^2(\Omega),
    \quad \theta_0 \in L^\alpha(\Omega),\quad  {\ln\theta}_0 \in L^1(\Omega).
\end{eqnarray*}%%----------------------------%%
    Moreover, $\mathbf{v}_0$ is attained in $C_w(0,T; L^2(\Omega))$  and
    \begin{eqnarray*}
        \lim\limits_{t\to 0}\int_\Omega \left(\ln\theta+\frac{\alpha}{\alpha-1}\theta^{\alpha-1}\right)(t)\geq \int_{\Omega}\left(\ln\theta_0+\frac{\alpha}{\alpha-1}\theta_0^{\alpha-1}\right);
    \end{eqnarray*}
 \item $\mathbf{u}\in L^\infty(0,T; H_0^1(\Omega))\cap W^{1,\infty}(0,T;L^2(\Omega))$,\\
    $\theta \in L^\infty(0,T; L^\alpha(\Omega))$,\\
    $\theta^\beta \in L^1(0,T; L^{3}(\Omega))$, \\
    $\theta^{\frac{\alpha}2+\frac{\beta}2}\in L^{2}(0,T; L^{\frac32}(\Omega))$,\\
    $\nabla \theta^{\frac{s}2} \in L^2((0,T)\times \Omega)$, for all $s\in(0,\beta]$,\\
    $\ln\theta\in L^\infty(0,T; L^1(\Omega))\cap L^2(0,T; H^1(\Omega))$;
    \item The momentum equation
    \begin{eqnarray}\label{momweak:nonlin}%%%%%%%%%%%%%
        &&\int_0^T\int_\Omega \mathbf{u}_t \cdot \boldsymbol\varphi_t - \int_0^T\int_\Omega \nabla\mathbf{u} : \nabla \boldsymbol\varphi + \mu\int_0^T\int_\Omega\theta \nabla\cdot\boldsymbol\varphi \nonumber \\
        &&=-\int_\Omega v_0\cdot \boldsymbol\varphi,
\end{eqnarray}%%----------------------------%%
holds for all test function $\boldsymbol\varphi \in C_0^\infty([0,T)\times \Omega)$;
\item The entropy balance
\begin{eqnarray}%%%%%%%%%%%%%
   &&\int_0^T \int_\Omega(\ln\theta+\frac{\alpha}{\alpha-1}\theta^{\alpha-1}) \phi_t-\int_0^T \int_\Omega \left(\frac{(1+\theta^\beta)\nabla\theta\cdot \nabla \phi}{\theta} \right) \nonumber \\
   &&\quad + \mu\int_0^T \int_\Omega \mathbf{u}_t \cdot \nabla \phi +  \left\langle \boldsymbol\sigma, \phi \right\rangle_{[\mathcal{M}^+;C]([0,T]\times \overline{\Omega})} \ \nonumber \\
   &&= -\int_\Omega (\ln\theta_0+\frac{\alpha}{\alpha-1}\theta_0^{\alpha-1}) \phi_0, \label{entweak:nonlin}
\end{eqnarray}%%----------------------------%%
holds for all non-negative $\phi \in C_0^\infty([0,T)\times \Omega)$, where $\boldsymbol\sigma \in \mathcal{M}^+([0,T]\times \overline{\Omega})$ satisfies
\begin{eqnarray*}%%%%%%%%%%%%%
     \boldsymbol\sigma\geq\frac{(1+\theta^\beta)|\nabla\theta|^2}{\theta^2};
\end{eqnarray*}%%----------------------------%%
\item The energy inequality
\begin{eqnarray}%%%%%%%%%%%%%
  &&\int (\theta+\theta^\alpha)(t)+ \frac12 \int_\Omega |\mathbf{u}_t|^2(t) + \frac12 \int_\Omega |\nabla\mathbf{u} |^2(t) \nonumber \\
  &&\leq \int_\Omega (\theta_0+\theta_0^\alpha)+ \frac12 \int_\Omega |\mathbf{v}_0|^2 + \frac12 \int_\Omega |\nabla\mathbf{u}_0 |^2, \label{enweak:nonlin}
\end{eqnarray}%%----------------------------%%
holds for all $t\in (0,T]$.
\end{itemize}
\end{mydef}
We shall show the following result concerning the problem \eqref{system0:nonlin}:
\begin{thm}[\textbf{Main result II}]\label{main:nonlin}
Let $\mathbf{u}_0\in H_0^1(\Omega)$, $\mathbf{v}_0\in L^2(\Omega)$, $\theta_0 \in L^\alpha(\Omega)$ with $\ln\theta_0 \in L^1(\Omega)$, and let $\alpha>1, \beta>0$. Then, one has the following:
\begin{enumerate}
    \item[] \textbf{Global existence}. There exists a weak solution $(\mathbf{u},\theta)$ in the sense of Definition \ref{weaksolution:nonlin};
    \item[] \textbf{Consistency}. If a weak solution in the sense of Definition \ref{weaksolution} is smooth, then the solution solves both (\ref{system0:nonlin}) and (\ref{system:nonlin}) pointwise;
    \item[] \textbf{Weak-strong uniqueness}.  Let $(\mathbf{u},\theta)$ be a weak solution in the sense of Definition $\ref{weaksolution:nonlin}$. Then, it satisfies the relative entropy inequality $\eqref{rel:ent:nonlin}$. Moreover, if $(\tilde{\mathbf{u}},\tilde{\theta})$ is a classical solution to the problem $(\ref{system:nonlin})$ and $\alpha\geq 2, \beta = 2$, then
\begin{eqnarray*}%%%%%%%%%%%%%
    \theta_0=\tilde{\theta}_0>0, \qquad \mathbf{u}_0 = \tilde{\mathbf{u}}_0,\qquad \mathbf{v}_0 = \tilde{\mathbf{v}}_0,
\end{eqnarray*}%%----------------------------%%
implies
\begin{eqnarray*}%%%%%%%%%%%%%
    \mathbf{u}\equiv \tilde{\mathbf{u}}, \qquad \theta\equiv \tilde{\theta}.
\end{eqnarray*}%%----------------------------%%
\end{enumerate}
\end{thm}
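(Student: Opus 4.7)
The plan is to mirror the proof of Theorem \ref{main:lin} at each step, adapting the analysis to the nonlinear constitutive laws $e(\theta)=\theta+\theta^\alpha$ and $\kappa(\theta)=1+\theta^\beta$, and to use a Feireisl--Novotny style approximation scheme tailored to the hyperbolic-parabolic coupling.

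For \textbf{global existence}, I would build approximate solutions by a multi-level regularization: a Faedo--Galerkin truncation for $\mathbf{u}$ coupled to a regularized parabolic equation for $\theta$ in which the coupling $\mu\theta\nabla\cdot\mathbf{u}_t$ is replaced by $\mu[\theta]_n\nabla\cdot\mathbf{u}_t$ with $[\theta]_n=\min(\theta,n)$, and a small artificial viscosity (e.g.\ $\varepsilon\theta^{k}$ with $k$ large) is added in order both to ensure parabolic regularity and to guarantee strict positivity of $\theta_n$, so that the entropy identity \eqref{system:nonlin} is derived rigorously. Existence on the approximate level follows by a standard fixed-point/continuation argument. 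Crucially, both \eqref{en:id:nonlin} and \eqref{diss:nonlin} hold as identities at this level and yield the full list of bounds in Definition \ref{weaksolution:nonlin}: the energy identity gives $\theta\in L^\infty(0,T;L^\alpha)$ and the hyperbolic estimates on $\mathbf{u}$; the dissipation identity gives $\nabla\ln\theta\in L^2$ and $\nabla\theta^{s/2}\in L^2$ for all $s\in(0,\beta]$, and a Ladyzhenskaya-type interpolation against $L^\infty(L^\alpha)$ produces the mixed bounds $\theta^\beta\in L^1(0,T;L^3)$ and $\theta^{\alpha/2+\beta/2}\in L^2(0,T;L^{3/2})$. Attainment of initial data follows from the standard weak-formulation argument, with the entropy inequality at $t=0$ arising because one is free to choose a one-sided cut-off in the test function.

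The limit passage is the analytical heart. Aubin--Lions applied to $(\mathbf{u}_n,\partial_t\mathbf{u}_n)$ gives strong convergence of $\mathbf{u}_n$ in $L^2$; combining the $L^2(H^1)$-bounds on $\ln\theta_n$ and on $\theta_n^{s/2}$ with time equicontinuity extracted from \eqref{system:nonlin} yields a.e.\ convergence of $\theta_n$. The momentum equation \eqref{momweak:nonlin} passes to the limit directly since $\theta_n\to\theta$ strongly in $L^q$ for some $q>1$ by interpolation. In the entropy equation, the flux $(1+\theta^\beta)\nabla\theta/\theta = \nabla\ln\theta + \tfrac{2}{\beta}\nabla\theta^{\beta/2}\cdot\theta^{\beta/2-1}\cdot(\ldots)$ passes to the limit by a Minty/monotonicity argument exploiting monotonicity of $\theta\mapsto\ln\theta$ and of $\theta\mapsto\theta^\beta$, together with the weak $L^2$-bounds on the gradient terms. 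For the entropy production $(1+\theta_n^\beta)|\nabla\theta_n|^2/\theta_n^2$ only lower semicontinuity under weak convergence is available; the gap between the weak limit in $\mathcal{M}^+$ and the pointwise limit is exactly what is encoded in the defect measure $\boldsymbol\sigma\geq(1+\theta^\beta)|\nabla\theta|^2/\theta^2$.

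\textbf{Consistency} is bookkeeping: if $(\mathbf{u},\theta)$ is smooth, then $\boldsymbol\sigma$ is forced to equal $(1+\theta^\beta)|\nabla\theta|^2/\theta^2$ (otherwise the extra non-negative distribution contradicts the smooth entropy balance), so \eqref{entweak:nonlin} reduces to the strong form of \eqref{system:nonlin}, and multiplying the latter by $\theta$ and using \eqref{momweak:nonlin} recovers \eqref{system0:nonlin} pointwise. \textbf{Weak--strong uniqueness} is proved by the relative entropy method. I would define a Bregman-type thermal distance $H(\theta|\tilde\theta)$ generated by the convex functional $\theta\mapsto\theta+\theta^\alpha-\ln\theta-\tfrac{\alpha}{\alpha-1}\theta^{\alpha-1}$ (convex and, for $\alpha\ge 2$, locally comparable to $|\theta-\tilde\theta|^2$), combine it with the elastic and kinetic distances for $\mathbf{u}-\tilde{\mathbf{u}}$, and test the weak formulations \eqref{momweak:nonlin}--\eqref{entweak:nonlin} against functions of $(\tilde{\mathbf{u}},\tilde\theta)$; subtracting the corresponding classical identities and combining with the energy inequality \eqref{enweak:nonlin} produces \eqref{rel:ent:nonlin}. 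The defect $\boldsymbol\sigma$ enters with the favorable sign $\boldsymbol\sigma\geq(1+\theta^\beta)|\nabla\theta|^2/\theta^2$ and can be dropped. A Gr\"onwall argument then closes provided the remainder (a cubic form in $\theta-\tilde\theta$, $\nabla(\mathbf{u}-\tilde{\mathbf{u}})$ and $\nabla(\theta-\tilde\theta)$) is controlled by the relative entropy plus its dissipation; this is precisely where the restrictions $\alpha\ge 2,\beta=2$ are forced, since they make the coupling-induced cross terms absorbable.

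The \textbf{main obstacle} is the combined limit passage in the entropy production and in the coupling $\mu\theta_n\nabla\cdot\mathbf{u}_{n,t}$: the hyperbolic momentum equation provides less regularity than the parabolic Navier--Stokes setting of \cite{FN}, forcing the coupling to be handled through the entropy rather than through the energy and making a defect measure unavoidable. A secondary obstacle is the algebraic bookkeeping in the relative entropy, where the exponents $\alpha\ge 2,\beta=2$ must be tracked term by term to close Gr\"onwall.
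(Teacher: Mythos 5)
Your overall architecture (regularized Galerkin scheme, entropy reformulation, defect measure from lower semicontinuity of the entropy production, relative entropy with a Bregman-type distance closed by Gr\"onwall under $\alpha\ge 2$, $\beta=2$) matches the paper's. The gap is in the limit passage, precisely at the step you call the analytical heart: you claim that a.e.\ convergence of $\theta_n$ follows from the $L^2(H^1)$ bounds on $\ln\theta_n$ and $\theta_n^{s/2}$ together with ``time equicontinuity extracted from \eqref{system:nonlin}''. But the entropy balance only controls the time derivative of the \emph{combined} quantity $s_1(\theta_n)+\mu\nabla\cdot\mathbf{u}_n$, with $s_1(\theta)=\ln\theta+\frac{\alpha}{\alpha-1}\theta^{\alpha-1}$, in a negative Sobolev space; it gives no separate time regularity for $\ln\theta_n$ or for $\theta_n^{s/2}$. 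Conversely, the available spatial gradient bounds cover only $\ln\theta_n$ and $\theta_n^{s/2}$ for $s\le\beta$, so unless $2(\alpha-1)\le\beta$ there is no spatial compactness for the $\theta_n^{\alpha-1}$ part of $s_1(\theta_n)$. Hence Aubin--Lions does not apply to any single quantity for which you hold both ingredients, and your argument does not yield a.e.\ convergence for general $\alpha>1$, $\beta>0$. The paper circumvents this with compensated compactness: the div-curl lemma applied to $\mathbf{U}_n=[s_1(\theta^n),\,\mu\mathbf{u}_t^n-\kappa(\theta^n)\nabla\theta^n/\theta^n]$ and $\mathbf{V}_n=[T_k(\theta^n),0]$ yields $\overline{s_1(\theta)T_k(\theta)}=\overline{s_1(\theta)}\ \overline{T_k(\theta)}$, and the Feireisl--Novotny theory of weak limits of monotone and strictly convex compositions then identifies $\overline{\theta^{\alpha-1}}=\theta^{\alpha-1}$ and upgrades this to a.e.\ convergence (with a separate treatment of $\alpha=2$, where $x\mapsto x^{\alpha-1}$ is not strictly convex). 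Your subsequent ``Minty/monotonicity'' treatment of the flux is likewise replaced in the paper by a strong-times-weak argument that again relies on the a.e.\ convergence just obtained.

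A secondary, fixable point: to make the entropy equation rigorous at the approximate level you need $\theta_n$ bounded away from zero, and an added dissipation of the form $\varepsilon\theta^{k}$ does not supply a positive lower bound. The paper's regularization $\delta\theta^{2}-\delta\theta^{-2}$ is chosen exactly so that constant sub- and supersolutions exist, and strict positivity (and boundedness from above) then follow from a comparison principle for the quasilinear equation, rather than from the maximum principle used for the constant-coefficient model.
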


\begin{rem}
The above result also holds when heat conductivity is constant, i.e. when $\alpha>1$ and $\beta=0$.
\end{rem}

\section{Comparison with known results and discussion}\label{compar}

In this section, we compare our result with the known literature. The study of existence of solutions to systems of thermoelasticity dates back to 1960s, however the initial work was related to the linear simplification  (linear thermoelasticity is addressed in detail shortly after). Nonlinear systems of elasticity require more sophisticated methods. Under some particular choices of nonlinear couplings $b(\theta, \mathbf{u}_{tx})$ replacing the term $- \mu \theta \nabla\cdot \mathbf{u}_t$ in the second equation of (\ref{system0}), smooth, classical solutions were known in the literature in the spatially one-dimensional setting locally in time for any data and globally for small data. This was investigated by various authors
and one of the pioneer works was Slemrod's paper \cite{slemrod}. Still, assumptions required to guarantee existence of smooth solutions with nonnegative temperature were so specific, that the simplest system (\ref{system0}) was not covered. See the discussion in \cite{CGM} and results in \cite{CD, jiang, racke}. The classical smooth local-in-time solutions with nonnegative temperature were obtained recently in \cite{CGM}. Let us also recall earlier similar results in \cite{jr, rashi}. Still, even in 1D case, the problem of existence of global-in-time solutions is widely opened. In 1D, one could mention the global existence of measure-valued solutions in \cite[Theorem 2]{CGM}. The regularity of such a solution is very weak and there seems no hope in that case for any sort of weak-strong uniqueness.

Recently, a notable attention is being given to problems in adiabatic thermoelasticity -- the system in which the heat flux is neglected, a valid approximation in thermodynamical systems in which processes happen in a small time interval and thus not allow for heat flux to take effect. From a mathematical point of view, this is a purely hyperbolic system. Adiabatic thermoviscoelastic system was studied by C. Christoforou and A. Tzavaras in \cite{chrtzav}. They use the method of relative entropy in order to prove the zero viscosity limit, thus going from thermoviscoelasticity with heat flux to adiabatic thermoelastic system. Note that the existence of viscous solutions is not shown. Their next results obtained together with Galanopoulou in \cite{chrmytz, chrmytz2} concern some adiabatic thermoelastic problems. In the first paper, a measure-valued solution to the thermoelastic problem is obtained. It requires the use of Young measures as well as an embedding of the considered system into the hyperbolic conservation laws framework. It requires some structural assumptions (see \cite[Remark 1, p. 6182]{chrmytz}). It seems that the solution obtained in \cite{chrmytz2}, again a measure-valued one, requires similar strong structural assumptions. The studies of weak-strong uniqueness of the weak solutions obtained in the above papers were performed in \cite{gala}. 

Let us comment that the question concerning regularity of the obtained solution is not clear. One cannot a priori exclude the finite-time blowup of first derivatives, see \cite{DafHsiao}.

Finally, let us mention that related system of thermoviscoelasticity was also extensively studied, see e.g. \cite{DafHsiao1,RackeZheng}, \cite[Chapter 12]{Roub} and references within. However, in this case the system is of parabolic - parabolic type and therefore analysis  of such system is much different form the analysis presented here.

\subsection{Linear thermoelasticity}
Under the assumption that $\theta$ is close to a state $\theta_0=const$, the system of equations $\eqref{system0}_{1,2}$ can be reduced to a linear one (see for instance \cite[Chapter 4]{racke2000evolution}):
\begin{equation}\label{linearTh}
\begin{aligned}
\begin{cases}
\mathbf{u}_{tt} = \nabla \cdot ( \nabla\mathbf{u}   -\mu \theta I),\\
\theta_t - \Delta \theta =- \mu \theta_0 \nabla\cdot \mathbf{u}_t.
\end{cases}
\end{aligned}
\end{equation}
This is a well-known system which has been studied extensively by many authors. To the best of our knowledge the first result was due to Dafermos \cite{D}, followed by plethora of other works, see e.g. \cite{racke2000evolution}, \cite[Chapter 7]{Lagnese}, \cite[Chapter 3]{LT00}, \cite{LZ98},  and references within.  

Let us point out some important properties of $\eqref{linearTh}$. First of all, the equation $\eqref{linearTh}_2$ is actually an entropy equation, the entropy is $\theta+\mu\theta_0\nabla\cdot\mathbf{u}$, the entropy flux is $-\nabla\theta$, while the entropy production is zero. To obtain the corresponding heat equation, one multiplies $\eqref{linearTh}_2$ with $\theta$ which gives
\begin{eqnarray*}%%%%%%%%%%%%%
    \frac12 (\theta^2)_t - \frac12 \Delta (\theta^2)+|\nabla\theta|^2 = - \mu \theta_0 \theta \nabla\cdot \mathbf{u}_t.
\end{eqnarray*}%%----------------------------%%
From a physical point of view, there are three major issues here - the entropy production is zero, there is a dissipation coming from temperature $|\nabla\theta|^2$ and the system allows temperature to be negative (the maximum principle isn't satisfied). Moreover, this model predicts that temperature will oscillate around (or simply stay close to) a stationary state, which is not physical, since mechanical energy tends to irreversibly transfer to heat, thus increasing it in the process.

While this linear system has nice mathematical properties and serves as a good model for the study of stability of thermoelastic bodies, it is physical relevant only in certain regimes under quite restrictive assumptions. There have been efforts throughout the last four decades to study more nonlinear models, but most of them are restricted to special 1D cases or the developed theory requires special forms of nonlinearities. In this paper, we are finally able to bridge this gap and introduce a meaningful notion of a weak solution for a model in nonlinear thermoelasticity, which is in full agreement with the laws of thermodynamics. Moreover, these solutions are global-in-time for large data, coincide with the regular ones (provided such exists) and are also consistent in a sense that if they are smooth, they satisfy the equations pointwise. Finally, let us point out that the results also hold in lower dimensions 1 and 2.

\section{Physical derivation}\label{Phys}

The present section is devoted to the physical derivation of our model as well as some related models. When deriving the model, we obtain candidates for natural variables in which a model should be examined. This way we are led to the formal version of the important estimates on the one hand, and to the natural framework in which the weak solution is defined on the other hand.

\subsection{Derivation of the model}\label{derivation}
We start with a system of equations modeling the balance of momentum and internal energy
\begin{eqnarray}\label{original:system}%%%%%%%%%%%%%
     \begin{cases}
    \rho_S\mathbf{u}_{tt} = \nabla\cdot\boldsymbol\sigma,\\
    e_t +\nabla\cdot \mathbf{q}  = \boldsymbol\sigma\cdot\nabla \mathbf{u}_t,  \end{cases}
\end{eqnarray}
where the state variables are the displacement vector and the temperature $\mathbf{u},\theta$, respectively, and given functions are the density of the elastic body $\rho_S$, the stress tensor $\boldsymbol\sigma$, the internal energy $e$ and the internal energy flux $\mathbf{q}$, while $\boldsymbol\sigma\cdot\nabla \mathbf{u}_t$ represents the work done by the elastic body. We next assume
\begin{eqnarray*}%%%%%%%%%%%%%
    \rho_S \equiv 1, \quad \boldsymbol\sigma=\boldsymbol\sigma(\theta,\nabla \mathbf{u}),\quad e=e(\theta,\nabla \mathbf{u}), \quad \mathbf{q} = \mathbf{q}(\theta,\nabla\theta).
\end{eqnarray*}%%----------------------------%%
The free energy function $\psi=\psi(\theta,\mathbf{F})$, where $\psi:\mathbb{R}\times \mathbb{R}^{3\times 3}\to \mathbb{R}$, is defined as $\psi:=e-\theta s$, where $s$ is the entropy, while its Frech\'{e}t derivative w.r.t. $\mathbf{F}$ is denoted as $\psi_{\mathbf{F}}$. As a consequence of the Clausius-Duhem inequality (see e.g. \cite[Lemma 1.1]{racke2000evolution}), one must impose
\begin{eqnarray*}%%%%%%%%%%%%%
\boldsymbol\sigma(\theta,\nabla \mathbf{u})=\psi_{\mathbf{F}}(\theta,\nabla \mathbf{u}),\quad s(\theta,\nabla \mathbf{u})=-\psi_\theta(\theta,\nabla \mathbf{u}).
\end{eqnarray*}%%----------------------------%%
It is useful to write these relations in the following form of Gibbs' relations:
\begin{eqnarray}%%%%%%%%%%%%%
    &&\theta s(\theta,\nabla \mathbf{u})_\theta =e(\theta,\nabla \mathbf{u})_\theta, \label{gibbs1}\\
    &&\theta s(\theta,\nabla \mathbf{u})_{\mathbf{F}} =e(\theta,\nabla \mathbf{u})_{\mathbf{F}} -\boldsymbol\sigma(\theta,\nabla \mathbf{u}). \label{gibbs2}
\end{eqnarray}%%----------------------------%%
 Dividing the internal energy equation $\eqref{original:system}_2$ by $\theta$ (provided that $\theta>0$), we arrive at the entropy equation
\begin{eqnarray}\label{original:entropy}%%%%%%%%%%%%%
    s_t +\nabla \cdot \left(\frac{\mathbf{q}}{\theta} \right) = -\frac{\mathbf{q}\cdot\nabla\theta}{\theta^2},
\end{eqnarray}%%----------------------------%%
where $\frac{\mathbf{q}}{\theta}$ is the entropy flux and $-\frac{\mathbf{q}\cdot\nabla\theta}{\theta^2}$ is the entropy production. In accordance with the second law of thermodynamics, we have to impose a non-negative entropy production
\begin{eqnarray}\label{positive:ent:prod}%%%%%%%%%%%%%
    \mathbf{q}\cdot\nabla\theta \leq 0.
    \end{eqnarray}%%----------------------------%%
The total energy balance can be obtained by multiplying $\eqref{original:system}_1$ with $\mathbf{u}_t$ and summing up with $\eqref{original:system}_2$ to obtain
\begin{eqnarray}\label{original:energy:dif}%%%%%%%%%%%%%
    \frac{d}{dt}\mathcal{E}+ \nabla \cdot(\boldsymbol\sigma \mathbf{u}_t+\mathbf{q})=0,
\end{eqnarray}%%----------------------------%%
where the total energy is
\begin{eqnarray*}%%%%%%%%%%%%%
    \mathcal{E}=\frac{1}{2}|\mathbf{u}_t|^2 + e(\theta,\nabla \mathbf{u}).
\end{eqnarray*}%%----------------------------%%
Now, assuming the following boundary conditions on $\partial\Omega$
\begin{eqnarray}%%%%%%%%%%%%%
    (\boldsymbol\sigma \mathbf{u}_t)\cdot \mathbf{n}&=&0, \label{bnd:cond1}\\
    \mathbf{q}\cdot \mathbf{n}&=&0  \label{bnd:cond2},
\end{eqnarray}%%----------------------------%%
we integrate (\ref{original:energy:dif}) over $(0,t)\times\Omega$ to obtain
\begin{eqnarray}\label{original:energy}%%%%%%%%%%%%%
    \int_\Omega \mathcal{E}(t)=\int_\Omega\mathcal{E}(0),
\end{eqnarray}%%----------------------------%%
for any $t\in (0,T]$, so the total energy is conserved.\\ ${}$\\

In order to simplify the model, we first assume that energy can be decomposed as
\begin{eqnarray}\label{en:dec}%%%%%%%%%%%%%
    e(\theta,\nabla \mathbf{u})=e_1(\theta)+e_2(\nabla \mathbf{u}),
\end{eqnarray}%%----------------------------%%
so $(\ref{gibbs1})$ implies
\begin{eqnarray*}
        s(\theta,\nabla \mathbf{u})_\theta = \frac{ e_1(\theta)}{\theta},
\end{eqnarray*}
which by integrating w.r.t. $\theta$ one concludes that entropy can also be decomposed
\begin{eqnarray}\label{ent:dec}%%%%%%%%%%%%%
    s(\theta,\nabla \mathbf{u})=s_1(\theta)+s_2(\nabla \mathbf{u}),
\end{eqnarray}%%----------------------------%%
and the relations $(\ref{gibbs1})$ and $(\ref{gibbs2})$ then simplify to
\begin{eqnarray}%%%%%%%%%%%%%
    &&\theta s_1(\theta)_\theta =e_1(\theta)_\theta, \label{gibbs1:s}\\
    &&\theta s_2(\nabla\mathbf{u})_{\mathbf{F}} =e_2(\nabla\mathbf{u})_{\mathbf{F}} -\boldsymbol\sigma(\theta,\nabla\mathbf{u}). \label{gibbs2:s}
\end{eqnarray}%%----------------------------%%
The original system (\ref{original:system}) now simplifies to
\begin{eqnarray}\label{original:system:s}%%%%%%%%%%%%%
    \begin{cases}
    \mathbf{u}_{tt}=\nabla\cdot(e_2(\nabla \mathbf{u})_{\mathbf{F}}-\theta s_2(\nabla \mathbf{u})_{\mathbf{F}}),\\
    e_1(\theta)_t +\nabla\cdot \mathbf{q}(\theta,\nabla\theta) =-\theta s_2(\nabla \mathbf{u})_t,
    \end{cases}
\end{eqnarray}%%----------------------------%%
while the entropy equation stays the same. Note that the internal energy equation $(\ref{original:system})_2$ now becomes the heat equation $(\ref{original:system:s})_2$, since the elastic energy $e_2(\nabla \mathbf{u})$ is canceled out.
\begin{rem}
Let us point out an important observation. In the above system, due to decoupling of the energy $\eqref{en:dec}$, the stress tensor is of the form
\begin{eqnarray*}%%%%%%%%%%%%%
    \boldsymbol\sigma(\theta,\nabla\mathbf{u})=e_2(\nabla\mathbf{u})_{\mathbf{F}}-\theta s_2(\nabla\mathbf{u})_{\mathbf{F}},
\end{eqnarray*}%%----------------------------%%
while the work transfered to heat becomes $-\theta s_2(\nabla \mathbf{u})_t$. This is very restrictive in $\theta$ and completely determines the coupling.
\end{rem}
Finally, the following assumptions of thermodynamical stability are assumed on the energy and entropy:
\begin{eqnarray}%%%%%%%%%%%%%
       &&e_1(\theta),~e_1(\theta)_\theta>0, \quad \mbox{for all } \theta>0,\label{stability1a}\\
       &&\lim\limits_{\theta\to 0^+} s_1(\theta) = -\infty, \label{stability1b}
\end{eqnarray}%%----------------------------%%
and there exist constants $c_1>0$ and $c_2$ such that\footnote{This assumption is automatically satisfied if, for example, $e_2$ is convex and $s_2$ is concave.}
\begin{eqnarray}\label{stability2}%%%%%%%%%%%%%
      e_2(\mathbf{F})-c_1 s_2(\mathbf{F})\geq c_2,
\end{eqnarray}%%----------------------------%%
for all $\mathbf{F}\in \mathbb{R}^{3\times 3}$. Now, for any constant $\overline{\theta}>0$, we introduce the Helmholtz function
\begin{eqnarray*}%%%%%%%%%%%%%
       H_{\overline{\theta}}(\theta,\nabla \mathbf{u}):=e(\theta,\nabla \mathbf{u}) - \overline{\theta}s(\theta,\nabla \mathbf{u}).
\end{eqnarray*}%%----------------------------%%
Now, we multiply $(\ref{original:entropy})$ with $\overline\theta$, integrate over $(0,T)\times\Omega$ and then subtract from $(\ref{original:energy})$, which then gives us the total dissipation balance
\begin{eqnarray}\label{total:dissipation:balance}%%%%%%%%%%%%%
     &&\int_\Omega H_{\overline{\theta}}(t) +  \frac12 \int_\Omega |\mathbf{u}_t|^2(t) + \overline{\theta} \int_0^t\int_\Omega -\frac{\mathbf{q}(\theta,\nabla\theta)\cdot\nabla\theta}{\theta^2}  \nonumber \\
   &&= \int_\Omega H_{\overline{\theta}}(0)+ \frac12 \int_\Omega |\mathbf{v}_0|^2.
\end{eqnarray}%%----------------------------%%
First, by choosing $\overline{\theta}=c_1$,  from \eqref{stability1a}, \eqref{stability1b} and \eqref{stability2}, one directly has that $\theta>0$, at least a.e. Next, noticing that the condition (\ref{stability1a}) implies $s_1(\theta)_\theta > 0$, one has by (\ref{gibbs1:s})
\begin{eqnarray*}%%%%%%%%%%%%%
    (e_1 - c_1 s_1)_\theta=(e_1)_\theta - c_1(s_1)_\theta = \underbrace{(e_1)_\theta - \theta (s_1)_\theta}_{=0}+(\theta-c_1)(s_1)_\theta.
\end{eqnarray*}%%----------------------------%%
Therefore the function $e_1 - c_1 s_1$ attains its global minimum on $\mathbb{R}^+$ at $\theta=c_1$. This combined with $(\ref{stability2})$ implies the coercivity of the Helmholtz function
\begin{eqnarray}\label{HelmholtzCo}
      H_{c_1}(\theta,\nabla \mathbf{u})=e(\theta,\nabla \mathbf{u}) - c_1 s(\theta,\nabla \mathbf{u}) \geq C,
\end{eqnarray}%%----------------------------%%
where $C$ only depends on $c_1$ and $c_2$ from $(\ref{stability2})$. Thus, (\ref{total:dissipation:balance}) for $\overline{\theta}=c_1$ implies the boundedness of the entropy production.

\subsection{Prescribing the constitutive functions}
First, we will assume that the deformation is small, which leads to the following assumption
\begin{eqnarray*}
     e_2(\nabla \mathbf{u}) = \frac12|\nabla\mathbf{u}|^2.
\end{eqnarray*}
Next, it is standard to assume that the stress due to heat expansion is of the following linear form $\theta s_2(\nabla \mathbf{u})_{\mathbf{F}}=\mu \theta I$, where $\mu>0$, which directly leads to 
\begin{eqnarray*}%%%%%%%%%%%%%
    s_2(\nabla \mathbf{u}) = \mu\nabla\cdot \mathbf{u},
\end{eqnarray*}%%----------------------------%%
since $(\nabla\cdot \bb{u})_{\bb{F}}=I$. Note that $\nabla\cdot \bb{u}$ represents the linearization of Jacobian and thus approximates the volume, which is consistent with the natural increase of the entropy with the volume. Next, the internal energy flux satisfies the Fourier law
\begin{eqnarray*}
     \bb{q}(\theta,\nabla\theta)=-\kappa(\theta)\nabla\theta,
\end{eqnarray*}
where $\kappa(\theta)>0$ is the heat conductivity. Finally, we assume that the body is fixed at the boundary and the internal energy flux through the boundary is zero:
\begin{eqnarray*}%%%%%%%%%%%%%
    && \mathbf{u} = 0, \quad \partial_n\theta=0, \quad \mbox{on } (0,T)\times\partial\Omega.
\end{eqnarray*}%%----------------------------%%
Supplementing with initial data, this leads to the following problem:
\begin{eqnarray*}
  \begin{cases}
  \bb{u}_{tt}=\nabla\cdot(\nabla\bb{u}  - \mu\theta I),&\quad \text{ in } (0,T)\times\Omega,\\
  c_V(\theta) \theta_t -  \nabla\cdot(\kappa(\theta)\nabla \theta) = -\mu \theta \nabla\cdot \bb{u}_t,&\quad \text{ in } (0,T)\times\Omega,\\
  \bb{u}=0,~ \partial_n \theta = 0,&\quad \text{on } (0,T)\times \partial\Omega,\\
  \bb{u}(0,\cdot)=\bb{u}_0, ~  \bb{u}_t(0,\cdot)=\bb{v}_0, ~ \theta(0,\cdot)=\theta_0,&
  \end{cases}
\end{eqnarray*}    
where $c_V(\theta):=e_1(\theta)_\theta$ is the heat capacity. It remains to prescribe $c_V(\theta)$ and $\kappa(\theta)$.
\subsubsection{High temperature model}
For many homogeneous materials, the heat capacity is known to be almost constant at high temperatures due to the Dulong-Petit law (see \cite[Section 2]{oxword}). On the other hand, heat conductivity tends to show little change at high temperatures in many relevant materials\footnote{For example crystals and glasses.} (see \cite{ross}). This leads to the first model:
\begin{eqnarray*}
    c_V(\theta) = 1, \quad \kappa(\theta)=1.
\end{eqnarray*}

\subsubsection{Low temperature model}\label{non:lin:const}
At low temperatures, the heat capacity  for non-magnetic materials increases as $\theta^3$ due to the Debye law (see \cite[Section 2.2]{oxword}). Moreover, the heat conductivity at low temperatures also tends to increase as $\theta^3$ (see \cite[Pages 504, 505]{solidstate}). To keep the result general, the following is chosen:
\begin{eqnarray}
    c_V(\theta) = 1+\alpha\theta^{\alpha-1}, \quad \kappa(\theta)=1+\theta^\beta,
\end{eqnarray}
where $\alpha>1$ and $\beta>0$. Here, the constant part was added due to mathematical reasons. In particular, the constant in $c_V(\theta)$ ensures that $\ln\theta$ will be bounded, so the temperature will not vanish a.e, while the constant part in $\kappa(\theta)$ is added to provide enough control over $\ln\theta$. The conditions $\alpha>1$ and $\beta>0$ are  chosen so that $c_V(\theta)$ and $\kappa(\theta)$ are growing functions.

\section{Proof of Theorem \ref{main:lin}}
In our approach, it is crucial to work with thermodynamical variables, i.e. to use equation \eqref{ent:eq1}. However, this equation is formally equivalent with \eqref{system0}$_2$ under assumption $\theta>0$, which needs to be proved. Therefore, we split our proof in two parts. In the first part, we construct approximate solutions $(\vu^n,\theta^n)$ to equation \eqref{system0} such that $\theta^n>0$. In the second step, we introduce thermodynamical variables and pass to the limit in approximations parameters to obtain a weak solution in sense of Definition \ref{weaksolution}.

\subsection{Step 1: Approximate problem}
We introduce a smooth basis $\{\vphi_i\}_{i\in \mathbb{N}}$ of $H_0^1(\Omega)^3$ and denote $V_n:={\rm span}\{\vphi_i\}_{1\leq i\leq n}$. The approximate problem is defined as follows.
\begin{mydef}[Approximate problem]\label{DefApp}
We say that $(\vu^n,\theta^n)\in W^{1,\infty}(0,T;V_n)\times L^2(0,T;H^1(\Omega))$ is a solution to the approximate problem if for all $\vphi\in V_n$ and $\phi\in H^1(\Omega)$ the following equations are satisfied in $\mathcal{D}'(0,T)$
\begin{eqnarray}\label{APP1}
      \frac{d^2}{dt^2} \int_\Omega \mathbf{u}^n\cdot \vphi+\int_\Omega \nabla\mathbf{u}^n:\nabla \vphi=  \mu\int_\Omega \theta^n \nabla \cdot \vphi,
      \\\label{APP2}
       \frac{d}{dt}\int_{\Omega}\theta^n\phi+\int_{\Omega}\nabla \theta^n\cdot\nabla\phi=-\mu\int_{\Omega}\theta^n ( \nabla\cdot\mathbf{u}^n_t)\phi.
       %+\delta\int_{\Omega} \theta^3\phi
\end{eqnarray}
Moreover, for the initial data, we choose $0< c_n\leq \theta^n(0)=\theta_0^n\in H^1(\Omega)$ as a regularization of $\theta_0$ such that $\partial_n \theta_0^n =0$ on $\partial\Omega$, $\theta_0^n\to \theta_0$ in $L^1(\Omega)$ as $n\to \infty$ and $\int_\Omega \theta_0^n \leq \int_\Omega \theta_0$ for all $n\in \mathbb{N}$. For the displacement, we choose $\mathbf{u}^n(0)=P_{V_n}\vu_0$, $ \mathbf{u}_t^n(0)=P_{V_n}\mathbf{v}_0$, where $P_{V_n}$ is an orthogonal projection onto $V_n$.
\end{mydef}

%The regularizing $\delta$ term is added to ensure the uniform equi-integrability in space of $\theta^n$. Since in this step parameter $\delta$ is fixed, we will not explicitly write dependence of solution $(\vu^n,\theta^n)$ on $\delta$. In the second step we will take limit $\delta\to 0$.
\begin{rem}
Note that we are actually using the heat equation in approximate problem, while later we will switch to the entropy equation in order to pass to the limit $n\to\infty$.
\end{rem}

\begin{lem}\label{EnergyApprox}
Let $(\vu^n,\theta^n)\in W^{1,\infty}(0,T;V_n)\times L^2(0,T;H^1(\Omega))$ be a solution to the approximate problem given by Definition \ref{DefApp}. Then the following energy equality is satisfied:
\begin{align}\label{EEApp}
       \int_\Omega\theta^n(t)+\frac12\int_\Omega |\vu^n_t(t)|^2+\frac12 \int_\Omega |\nabla\vu^n(t)|^2=E^n_0,\quad \text{ for  a.a. } t\in [0,T],
\end{align}
where $E^n_0=\int_{\Omega}\theta_0^n+\frac12\int_\Omega|P_{V_n}\mathbf{v}_0|^2+\frac12\int_{\Omega}|\nabla P_{V_n}\vu_0|^2$ is the energy of the initial data projected onto finite-dimensional subspace $V_n$.
\end{lem}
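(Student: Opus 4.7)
The plan is a direct testing argument, which is standard for Galerkin approximations of wave-type equations. The key observation is that the nonlinear coupling terms from the momentum and heat equations cancel when the right test functions are chosen, which is exactly the physical content of total energy conservation.

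First I would unpack the regularity consequences of the approximate problem. Since $V_n$ is finite-dimensional with smooth basis $\{\vphi_i\}$, writing $\vu^n(t)=\sum_{i=1}^n c_i(t)\vphi_i$ reduces \eqref{APP1} to a linear ODE system for the coefficients $c_i$ with forcing $\mu\int_\Omega \theta^n \nabla\cdot\vphi_j$, where $\theta^n\in L^2(0,T;L^2(\Omega))$. This yields $c_i''\in L^2(0,T)$, hence $\vu^n_{tt}\in L^2(0,T;V_n)$, and one may rewrite \eqref{APP1} in the pointwise-in-time form
\begin{equation*}
\int_\Omega \vu^n_{tt}\cdot\vphi + \int_\Omega\nabla\vu^n:\nabla\vphi=\mu\int_\Omega\theta^n\nabla\cdot\vphi,
\qquad \text{for a.e. } t\in[0,T],\ \vphi\in V_n.
\end{equation*}
Since $\vu^n_t(t)\in V_n$ for a.e.\ $t$, it is an admissible test function.

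The core of the proof has two moves. \textbf{(a)} Choose $\vphi=\vu^n_t(t)$ in the reformulated momentum equation. The left-hand side becomes $\tfrac12\tfrac{d}{dt}\bigl(\int_\Omega|\vu^n_t|^2+\int_\Omega|\nabla\vu^n|^2\bigr)$, giving
\begin{equation*}
\tfrac12\tfrac{d}{dt}\int_\Omega |\vu^n_t|^2+\tfrac12\tfrac{d}{dt}\int_\Omega|\nabla\vu^n|^2 = \mu\int_\Omega\theta^n(\nabla\cdot\vu^n_t).
\end{equation*}
\textbf{(b)} Choose $\phi\equiv 1\in H^1(\Omega)$ in \eqref{APP2}; the gradient term vanishes and one obtains, in $\mathcal{D}'(0,T)$,
\begin{equation*}
\tfrac{d}{dt}\int_\Omega\theta^n = -\mu\int_\Omega\theta^n(\nabla\cdot\vu^n_t).
\end{equation*}
Adding (a) and (b), the two coupling terms cancel exactly:
\begin{equation*}
\tfrac{d}{dt}\Bigl(\int_\Omega\theta^n+\tfrac12\int_\Omega|\vu^n_t|^2+\tfrac12\int_\Omega|\nabla\vu^n|^2\Bigr)=0.
\end{equation*}

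Finally I would integrate in time on $[0,t]$ and identify the initial values. For the mechanical terms, $\vu^n(0)=P_{V_n}\vu_0$ and $\vu^n_t(0)=P_{V_n}\vv_0$ by construction; for the thermal term $\theta^n(0)=\theta_0^n$, which follows from \eqref{APP2} tested against time-dependent test functions in $C^1([0,T];H^1(\Omega))$ vanishing at $t=T$ (this is the standard way to recover the initial trace for a parabolic equation with $L^2(0,T;H^1)$ regularity). Collecting these identifications yields \eqref{EEApp} with $E^n_0$ as stated.

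The only real obstacle is the self-consistency of the testing procedure in (a): one must verify that the ODE system produces enough time regularity on $\vu^n_{tt}$ to justify the pointwise equation and the integration by parts in time $\int \vu^n_{tt}\cdot\vu^n_t = \tfrac12\tfrac{d}{dt}|\vu^n_t|^2$. This is purely a matter of finite-dimensional linear ODE theory applied to the coefficients $c_i(t)$ and causes no trouble once $\theta^n\in L^2(0,T;L^2(\Omega))$ is recorded; everything else is symbolic cancellation dictated by the thermodynamic structure of the system.
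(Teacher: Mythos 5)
Your proof is correct and follows essentially the same route as the paper's: test the momentum equation with $\vu^n_t$, take $\phi=1$ in the heat equation, observe that the coupling terms cancel, and integrate in time. The additional care you take with the time regularity of $\vu^n_{tt}$ from the finite-dimensional ODE system and with the identification of the initial trace of $\theta^n$ fills in details the paper leaves implicit.
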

\begin{proof}
We take $\vphi=\vu_t^n$ in \eqref{APP1}, $\phi=1$ in \eqref{APP2}, add the resulting equations and integrate over $(0,t)$. Notice that the terms on the right hand sides cancel.
\end{proof}

First we prove a version of the standard maximum principle result for parabolic equations which we will use to prove strict positivity of the temperature.
\begin{lem}\label{MaxPrinciple}
Let $a\in L^1(0,T;L^{\infty}(\Omega))$ and $\theta\in L^2(0,T;H^2(\Omega))\cap H^1(0,T;L^2(\Omega))$ be the solution to the following parabolic problem:
\begin{align*}
 \begin{cases}
    \theta_t-\Delta\theta=-a\theta,&\quad  \text{ in } (0,T)\times\Omega,
    \\
    \partial_n\theta=0,&\quad \text{ on } (0,T)\times\partial\Omega,
    \\
    \theta(0)=\theta_0^n>0.&
    \end{cases}
\end{align*}
Then
\begin{align*}
    \theta(t,\vx)\geq \left (\min_{\vx\in\Omega}\theta_0^n(\vx)\right)
    \exp\left(-\int_0^t\|a(s)\|_{L^{\infty}(\Omega)}ds\right ).
\end{align*}
\end{lem}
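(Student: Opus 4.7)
The plan is to compare $\theta$ from below with the time-dependent barrier
\[
\psi(t) := m\, \exp\!\left(-\int_0^t \|a(s)\|_{L^{\infty}(\Omega)}\, ds\right), \qquad m := \min_{\vx \in \Omega}\theta_0^n(\vx) > 0,
\]
which is a spatially constant function of $t$ alone, positive for all $t \in [0,T]$ (since $a \in L^1(0,T;L^{\infty})$), and satisfies $\psi'(t) = -\|a(t)\|_{L^{\infty}}\psi(t)$ with $\psi(0) = m$. This is exactly the right-hand side of the claimed inequality, so proving $\theta \geq \psi$ on $(0,T)\times\Omega$ suffices.

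Setting $v := \theta - \psi$, the difference solves
\[
v_t - \Delta v = -a v + \big(\|a(t)\|_{L^{\infty}} - a(t,\vx)\big)\psi(t) \quad \text{in } (0,T)\times\Omega,
\]
with homogeneous Neumann boundary data and initial condition $v(0,\cdot) = \theta_0^n - m \geq 0$. The crucial observation is that the inhomogeneity $(\|a\|_{L^{\infty}} - a)\psi$ is pointwise non-negative, because $\psi > 0$ and $\|a(t)\|_{L^{\infty}} \geq a(t,\vx)$ a.e. With this structure in hand, I would run a standard Stampacchia / negative-part energy argument: multiply the equation by $-v^- := -\max(-v,0) \leq 0$ and integrate over $\Omega$. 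The regularity $\theta \in L^2(0,T;H^2)\cap H^1(0,T;L^2) \hookrightarrow C([0,T];H^1)$ guarantees that $v^- \in L^2(0,T;H^1)$ and that the chain rule $\frac{d}{dt}\tfrac12\|v^-\|_{L^2}^2 = \int_\Omega v_t(-v^-)\,dx$ is valid. Integration by parts (using $\partial_n v = 0$) produces $\|\nabla v^-\|_{L^2}^2$ on the left, the absorption term contributes at most $\|a(t)\|_{L^{\infty}}\|v^-\|_{L^2}^2$, and the non-negative forcing tested against the non-positive $-v^-$ gives a non-positive contribution, hence is discarded. The result is
\[
\tfrac12 \tfrac{d}{dt}\|v^-(t)\|_{L^2}^2 + \|\nabla v^-(t)\|_{L^2}^2 \leq \|a(t)\|_{L^{\infty}}\|v^-(t)\|_{L^2}^2.
\]

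Since $v^-(0) \equiv 0$ and $\|a\|_{L^{\infty}} \in L^1(0,T)$, Gronwall's inequality forces $v^- \equiv 0$, so $\theta \geq \psi$ pointwise almost everywhere, which is the stated bound. The only non-routine step is the chain-rule justification for the negative part in the given regularity class, but this is classical (see e.g.\ the Stampacchia lemma and its parabolic version); every other ingredient is a direct energy estimate. I expect no real obstacle beyond bookkeeping, because the hypothesis $a \in L^1(0,T;L^{\infty})$ was chosen precisely to make $\int_0^t\|a(s)\|_{L^{\infty}}\,ds$ a finite, absolutely continuous quantity to feed into Gronwall.
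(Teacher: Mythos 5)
Your proposal is correct and follows essentially the same route as the paper: the paper's proof introduces exactly the same barrier $C_0\exp(-\int_0^t\|a(s)\|_{L^\infty}\,ds)$, forms the difference $d=\theta-$barrier, observes the forcing term $(\|a\|_{L^\infty}-a)$ times the positive barrier is non-negative, and then tests with the negative part and applies Gronwall. Your version is in fact slightly more carefully written (the paper's displayed energy inequality has a sign/notation slip in the term $-a\|d^-\|^2_{L^2}$, which you state correctly as $\|a(t)\|_{L^\infty}\|v^-\|^2_{L^2}$), and your remark on justifying the chain rule for $v^-$ in the stated regularity class is a point the paper leaves implicit.
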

\begin{proof}
We define $C_0=\min_{\vx\in\Omega}\theta_0^n(\vx)$ and
\begin{align*}
    d(t,\vx)=\theta(t,\vx)-C_0\exp\left(-\int_0^t\|a(s)\|_{L^{\infty}(\Omega)}ds\right ).
\end{align*}
By straightforward calculation $d$ satisfies the following equation:
\begin{align*}
    d_t-\Delta d+ad=C_0\exp\left(-\int_0^t\|a(s)\|_{L^{\infty}(\Omega)}ds\right )\underbrace{(\|a\|_{L^{\infty}}-a)}_{\geq 0}.
\end{align*}
Therefore
\begin{align}\label{dEq}
     d_t-\Delta d+ad\geq 0,\quad d(0)\geq 0.
\end{align}
We multiply \eqref{dEq} with $d^-:=\max\{-d,0\}$ to get:
\begin{align*}
    \frac{1}{2}\frac{d}{dt}\|d^-\|^2_{L^2(\Omega)}+\|\nabla d^-\|^2_{L^2(\Omega)}
    \leq -a\|d^-\|_{L^2(\Omega)}^2.
\end{align*}
Since $d(0)\geq 0$, we have $d^-(0)=0$ and therefore by the Gronwall inequality we get $d^-=0$ which finishes the proof.
\end{proof}

\begin{proposition}\label{ExistenceApp}
For every $n\in\N$ there exists a solution $(\mathbf{u},\theta)$ to the approximate problem in the sense of Definition \ref{DefApp}. Moreover, one has $\theta\in L^2(0,T;H^2(\Omega))\cap H^1(0,T;L^2(\Omega))$.
\end{proposition}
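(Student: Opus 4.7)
The plan is a fixed-point argument that decouples the Galerkin ODE for $\mathbf{u}^n$ from the parabolic equation for $\theta^n$, exploiting that $V_n$ is finite-dimensional and spanned by smooth functions. First I would establish local-in-time existence via Banach's contraction principle on a short interval $[0,T_*]$; then the a priori energy identity of Lemma \ref{EnergyApprox} allows extension up to the prescribed time $T$. Positivity and the $H^2$-regularity of $\theta^n$ come directly from standard linear parabolic theory once the iteration is set up.

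Define $\mathcal{T}:\mathbf{w}\mapsto\mathbf{u}^n$ on a closed ball in $W^{1,\infty}(0,T_*;V_n)$ as follows. For given $\mathbf{w}$, the coefficient $a:=\mu\,\nabla\cdot\mathbf{w}_t$ lies in $L^\infty((0,T_*)\times\Omega)$ because $V_n$ is finite-dimensional with smooth basis. First solve the linear parabolic Neumann problem
\begin{equation*}
    \theta_t-\Delta\theta+a\theta=0,\qquad \partial_n\theta|_{\partial\Omega}=0,\qquad \theta(0)=\theta_0^n,
\end{equation*}
by a standard Galerkin/energy argument; the compatibility $\partial_n\theta_0^n=0$ combined with $\theta_0^n\in H^1(\Omega)$ yields a unique solution $\theta\in L^2(0,T_*;H^2(\Omega))\cap H^1(0,T_*;L^2(\Omega))$, and Lemma \ref{MaxPrinciple} gives strict positivity $\theta\geq c_n\exp\bigl(-\int_0^{T_*}\|a(s)\|_{L^\infty}\,ds\bigr)>0$. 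Second, the linear Galerkin ODE \eqref{APP1} with right-hand side $\mu\int_\Omega\theta\,\nabla\cdot\vphi_i\in L^2(0,T_*)$ and initial data $(P_{V_n}\mathbf{u}_0,P_{V_n}\mathbf{v}_0)$ admits a unique solution $\mathbf{u}^n\in W^{2,2}(0,T_*;V_n)\hookrightarrow W^{1,\infty}(0,T_*;V_n)$ by Carath\'eodory theory.

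Contraction is verified by subtracting the equations for two iterates coming from $\mathbf{w}^1,\mathbf{w}^2$ in the ball: an $L^2$-energy estimate for $\theta^1-\theta^2$ (using the uniform $L^\infty$-bound on each $\theta^i$ from the maximum principle) yields $\|\theta^1-\theta^2\|_{L^2((0,T_*)\times\Omega)}\leq CT_*^{1/2}\|\mathbf{w}^1-\mathbf{w}^2\|_{W^{1,\infty}(0,T_*;V_n)}$, and passing through the ODE step with norm equivalence on the finite-dimensional $V_n$ transfers this to an analogous bound for $\mathcal{T}\mathbf{w}^1-\mathcal{T}\mathbf{w}^2$, giving a strict contraction for $T_*$ small. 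Banach's theorem produces a local solution; Lemma \ref{EnergyApprox} applied to it bounds $\|\mathbf{u}^n_t\|_{L^\infty(0,T_*;L^2)}+\|\nabla\mathbf{u}^n\|_{L^\infty(0,T_*;L^2)}$ uniformly, so norm equivalence on $V_n$ also bounds $\|a\|_{L^\infty}$ by a constant depending only on $n$ and the initial data; hence $T_*$ can be chosen uniformly over reinitializations and finitely many iterations reach $[0,T]$.

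The main obstacle is the coupling: one must propagate positivity (handled by Lemma \ref{MaxPrinciple} at every step), keep the contraction constant uniform in the continuation (handled by norm equivalence on the finite-dimensional $V_n$), and preserve the $L^2(0,T;H^2(\Omega))\cap H^1(0,T;L^2(\Omega))$-regularity of $\theta$ (handled by standard parabolic theory for equations with $L^\infty$ coefficient and compatible Neumann data). Each of these is made possible by exactly the regularization built into Definition \ref{DefApp}: $\theta_0^n\geq c_n>0$ with $\theta_0^n\in H^1(\Omega)$ and $\partial_n\theta_0^n=0$, which is why the approximate scheme was set up in this way.
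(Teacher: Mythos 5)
Your argument is essentially correct, but it follows a genuinely different route from the paper. The paper works on the whole interval $[0,T]$ at once and applies Schaefer's fixed point theorem to the map $\mathcal{A}:X_n\to X_n$, $X_n=C^1([0,T];V_n)$: continuity and compactness of $\mathcal{A}$ are cheap because $V_n$ is finite-dimensional, and the a priori bound on the set $\{\vu=\lambda\mathcal{A}\vu\}$ comes from the energy identity of Lemma \ref{EnergyApprox}. You instead run a Banach contraction on a short interval and then continue, using the same energy identity to make the local existence time uniform over reinitializations. What Schaefer buys is the absence of any continuation/reinitialization step (in particular no need to worry about the regularity or compatibility of $\theta(T_*)$ as new initial datum, nor about verifying that the map sends a ball into itself); what your contraction buys is a more elementary argument (no compactness needed) plus uniqueness of the fixed point on each subinterval, which the paper does not claim. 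Both proofs rest on the same pillars: norm equivalence on $V_n$ to make $a=\mu\nabla\cdot\vu_t$ an $L^\infty$ coefficient, Lemma \ref{MaxPrinciple} for positivity, linear parabolic theory for the $L^2(0,T;H^2)\cap H^1(0,T;L^2)$ regularity of $\theta$, and Lemma \ref{EnergyApprox} for the global bound. Two small points to tidy up: Lemma \ref{MaxPrinciple} as stated gives only a \emph{lower} bound on $\theta$, so for the term $(a^1-a^2)\theta^2$ in your difference estimate you should invoke either a symmetric upper-bound comparison or simply the $L^\infty(0,T_*;L^2(\Omega))$ bound on $\theta^2$ coming from the Gronwall estimate \eqref{EstimateAppTheta} (which suffices, since $a^1-a^2\in L^\infty(\Omega)$ by norm equivalence); and you should note explicitly that the bound $\int_\Omega\theta\geq 0$ needed to extract the $W^{1,\infty}(0,T_*;V_n)$ bound from the energy identity is exactly where the propagated positivity enters.
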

\begin{proof}
We prove existence by using Schaefer's fixed point theorem. %Since $n$ is fixed in this Lemma, for simplicity of notation we will omit superscripts $n$ in the proof.
We define the fixed point set:
$$
X_n= C^1([0,T];V_n).
$$
We equip $X_n$ with the following norm: $\|\vu\|_{X_n}=\max_{t\in [0,T]}\{\|\vu(t)\|_{L^2(\Omega)},\|\vu_t(t)\|_{L^2(\Omega)}\}$.
The fixed point mapping $\mathcal{A}$  is defined in the following way. Let $\vu\in X_n$. We define $\theta(\vu)$ by solving \eqref{APP2} with $\vu^n$ on the right hand side. More precisely, $\theta(\vu^n)$ satisfies the following equation for every $\phi\in H^1(\Omega)$:
\begin{align}\label{DefAppTheta}
 \frac{d}{dt}\int_{\Omega}\theta(\vu)\phi+\int_{\Omega}\nabla \theta(\vu)\cdot\nabla\phi=-\mu\int_{\Omega}\theta(\vu) ( \nabla\cdot\mathbf{u}_t)\phi.
\end{align}
Existence of a weak solution $\theta(\vu)\in L^2(0,T;H^1(\Omega))$ is a consequence of the standard parabolic theory. Moreover, by taking $\phi=\theta(\vu)$, we obtain the following estimate:
\begin{align}\label{EstAppTheta}
    \frac12\frac{d}{dt}\|\theta(\vu)\|^2_{L^2(\Omega)}
    +\|\nabla\theta(\vu)\|^2_{L^2(\Omega)}
    \leq \mu\|\nabla\cdot\vu_t\|_{L^{\infty}(\Omega)}\|\theta(\vu)\|^2_{L^2(\Omega)}.
\end{align}
However, since $V_n$ is a finite dimensional space, all norms are equivalent so we also have
\begin{align}\label{EstimateFD}
     \|\nabla\cdot\vu_t\|_{L^{\infty}(\Omega)}
     \leq C_n\|\vu_t\|_{L^2(\Omega)}.
\end{align}
Note that the constant in \eqref{EstimateFD} depends on $n$, but in the proof of this Lemma $n$ is fixed so this dependence does not influence the proof. By combining \eqref{EstAppTheta} and \eqref{EstimateFD}, and using Gronwall's inequality we get:
\begin{align}\label{EstimateAppTheta}
    \|\theta(\vu)\|_{L^2(0,T;H^1(\Omega))}\leq C_n(\|\vu\|_{X_n}),
\end{align}
where $C_n:\R_+\to\R_+$ is a monotone function. Now, $\mathcal{A}\vu$ is obtained by solving \eqref{APP1} with $\theta(\vu)$ instead of $\theta$ on the right hand side. More precisely, $\mathcal{A}\vu$ satisfies the following equations:
\begin{align}\label{AppWave}
      \frac{d^2}{dt^2} \int_\Omega (\mathcal{A}\vu)\cdot \vphi+\int_\Omega \nabla(\mathcal{A}\vu):\nabla \vphi=  \mu\int_\Omega \theta(\vu) \nabla \cdot \vphi,\quad \vphi\in V_n.
\end{align}
Existence of $\mathcal{A}\vu\in C^1([0,T];V_n)$ is obtained by solving a linear ODE system \eqref{AppWave}. Therefore the operator $\mathcal{A}:X_n\to X_n$ is well defined. Let us prove that is satisfies the assumptions of Schaefer's fixed point theorem.

\noindent
\textbf{Continuity of $\mathcal{A}$}.
Let $\vu^m$ be a convergent sequence in $X_n$ and $\vu=\lim\limits_{m\to\infty} \vu^m$. Let $\psi^m:=\theta(\vu)-\theta^m(\vu)$. By \eqref{DefAppTheta}, $\psi^m$ satisfies the following equality with zero initial data:
\begin{align*}
    &\frac{d}{dt}\int_{\Omega}\psi^m\phi+\int_{\Omega}\nabla \psi^m\cdot\nabla\phi\\
    &=-\mu\int_{\Omega}\theta(\vu) ( \nabla\cdot\mathbf{u}_t-\nabla\cdot\vu_t^m)\phi
    -\mu\int_{\Omega}(\nabla\cdot\vu_t^m)\psi^m\phi.
\end{align*}
By taking $\phi=\psi^m$, we arrive at
\begin{align*}
    &\frac{1}{2}\frac{d}{dt}\|\psi^m\|^2_{L^2(\Omega)}
    +\|\nabla\psi^m\|^2_{L^2(\Omega)}\\
    &\leq
    \mu\| \nabla\cdot(\mathbf{u}_t-\vu_t^m)\|_{L^{\infty}(\Omega)}\|\theta(\vu)\|_{L^2(\Omega)}\|\psi^m\|_{L^2(\Omega)}\\
    &\quad+\mu\|\psi^m\|^2_{L^2(\Omega)}\|\nabla\cdot \vu_t^m\|_{L^{\infty}(\Omega)}.
\end{align*}
Next, \eqref{EstimateAppTheta} and boundedness of $\|\nabla\cdot \vu_t^m\|_{L^{\infty}(\Omega)}$, together with Gronwall's inequality, yield a bound 
\[
\|\nabla\psi^m\|^2_{L^\infty(0,T;L^2(\Omega))}\leq C_n\|\nabla\cdot(\mathbf{u}_t-\vu_t^m)\|_{L^{\infty}(0,T;L^{\infty}(\Omega))}.
\]
This, in turn, allows us to obtain:
\begin{align}\label{EstimateContTh}
    \|\psi^m\|_{L^2(0,T;H^1(\Omega))}
    \leq C_n\|\nabla\cdot(\mathbf{u}_t-\vu_t^m)\|_{L^{\infty}(0,T;L^{\infty}(\Omega))},
\end{align}
where $C_n$ is a constant independent of $m$. Now, let us denote $\mathbf{a}^m=\mathcal{A}\vu-\mathcal{A}\vu^m$. By definition of operator $\mathcal{A}$ in \eqref{AppWave}, $\mathbf{a}^m$ satisfy the following equations with zero  initial data:
\begin{align*}
    \frac{d^2}{dt^2} \int_\Omega \mathbf{a}^m\cdot \vphi+\int_\Omega \nabla \mathbf{a}^m:\nabla \vphi=  \mu\int_\Omega \psi^m \nabla \cdot \vphi,\;\vphi\in V_n.
\end{align*}
By taking $\vphi=\mathbf{a}_t^m$, we get
\begin{align*}
    \frac12\frac{d}{dt}\left (\| \mathbf{a}_t^m\|^2_{L^2(\Omega)}
    +\|\nabla\mathbf{a}^m\|^2_{L^2(\Omega)}\right )
    &\leq\|\nabla\psi^m\|_{L^2(\Omega)} \| \mathbf{a}_t^m\|_{L^2(\Omega)}
    \\
    &\leq  C_n\|\nabla\cdot(\mathbf{u}_t-\vu_t^m)\|_{L^{\infty}(0,T;L^{\infty}(\Omega))}
    \|\mathbf{a}_t^m\|_{L^2(\Omega)}.
\end{align*}
Since all norms on $V_n$ are equivalent we have $\|\nabla\cdot(\mathbf{u}_t-\vu_t^m)\|_{L^{\infty}(0,T;L^{\infty}(\Omega))}\to 0$ and therefore, by Gronwall's and Young's inequalities, $\|\mathbf{a}^m\|_{X_n}\to 0$. Thus we proved that $\mathcal{A}$ is a continuous function on $X_n$.

\noindent
\textbf{Compactness of $\mathcal{A}$}.
Let $\vu\in X_n$, $\|\vu\|_{X_n}\leq R$.
By \eqref{AppWave}, we have
\begin{align*}
    &\|(\mathcal{A}\vu)_{tt}\|_{L^2(0,T;L^2(\Omega))} \\
    &\leq \|\Delta(\mathcal{A}\vu)\|_{L^2(0,T;L^2(\Omega))}
    +\mu\|\nabla(\theta(\vu))\|_{L^2(0,T;L^2(\Omega))}
    \leq C_n(R).
\end{align*}
Here we have used continuity of $\mathcal{A}$ and equivalence of norms on $X_n$ to bound the first term, and \eqref{EstimateAppTheta} to bound the second. Since $X_n$ is finite dimensional, these estimates immediately give compactness of operator $\mathcal{A}$ due to the compactness of the embedding $H^1(0,T)\hookrightarrow C[0,T]$.

\noindent
\textbf{Boundedness of $Y=\{\vu\in X_n:(\exists\lambda\in (0,1])\;\vu=\lambda\mathcal{A}\vu\}$.}

Let $\vu\in Y$ and $\lambda\in (0,1]$ such that $\vu=\lambda\mathcal{A}\vu$. Then $(\frac{1}{\lambda}\vu,\theta(\vu))$ is a solution to the approximate problem given by Defintion \ref{DefApp}. Therefore, the following energy equality is satisfied by Lemma \ref{EnergyApprox}:
\begin{align*}
       &\int_\Omega\theta(\vu)(t)+\frac12\frac{1}{\lambda}\int_\Omega |\vu_t(t)|^2+\frac12\frac{1}{\lambda} \int_\Omega |\nabla\vu(t)|^2
       \\
       &=\int_{\Omega}\theta_0^n+\frac12\frac{1}{\lambda}\int_\Omega|P_{V_n}\mathbf{v}_0|^2+\frac12\frac{1}{\lambda}\int_{\Omega}|\nabla P_{V_n}\mathbf{u}_0|^2,\quad t\in [0,T].
\end{align*}
Therefore,
\begin{align*}
    \|\vu\|_{C^1([0,T;V_n)}\leq CE_0^n,\quad \vu\in Y.
\end{align*}

Thus, by Schaefer's fixed point theorem there exists a fixed point $\vu$ of the mapping $\mathcal{A}$. Moreover, by Lemma \ref{MaxPrinciple}, $\theta(\vu)$ is a strictly positive function. Therefore, $(\vu,\theta(\vu))$ is a solution to the approximate problem in the sense of Definition \ref{DefApp}.

Finally, since $\theta \nabla\cdot \mathbf{u}_t \in L^2((0,T)\times \Omega)$, there is a unique solution $\tilde\theta \in L^2(0,T;H^2(\Omega))\cap H^1(0,T;L^2(\Omega))$ to the equation
\begin{eqnarray*}%%%%%%%%%%%%%
    \tilde\theta_t-\Delta\tilde\theta=-\mu\theta \nabla\cdot \mathbf{u}_t,
\end{eqnarray*}%%----------------------------%%
so $\theta\equiv \tilde\theta$ and the improved regularity is thus obtained.
\end{proof}

\subsubsection{Passing to the limit}
\begin{lem}\label{WeakConvergenceLemma}
There exists $\theta\in L^{\infty}(0,T;\mathcal{M}^+(\Omega))$ and $\vu\in L^{\infty}(0,T;H^1(\Omega))\cap W^{1,\infty}(0,T;L^2(\Omega))$ such that the following convergence properties hold (on a subsequence)
\begin{eqnarray*}
    &\theta^n \rightharpoonup \theta,& \quad \text{ weakly* in } L^{\infty}(0,T;\mathcal{M}^+(\Omega)),
    \\
    &\vu^n \rightharpoonup \vu,& \quad \text{ weakly* in } L^{\infty}(0,T;H^1(\Omega)),
    \\
    &\vu_t^n \rightharpoonup \vu_t,& \quad \text{ weakly* in } L^{\infty}(0,T;L^2(\Omega)).
\end{eqnarray*}
\end{lem}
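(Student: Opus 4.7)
The whole lemma is a Banach--Alaoglu argument applied to three different spaces; the only real content is to extract uniform bounds from the approximate energy equality \eqref{EEApp} and to identify the right duality for the temperature sequence.

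\textbf{Step 1: Uniform bounds.} First I would observe that the initial energies are uniformly controlled: by the hypotheses on $\theta_0^n$ one has $\int_\Omega \theta_0^n \leq \int_\Omega \theta_0$, and since $P_{V_n}$ is an orthogonal projection on $L^2$ (resp.\ $H_0^1$), $\|P_{V_n}\vv_0\|_{L^2}\leq \|\vv_0\|_{L^2}$ and $\|\nabla P_{V_n}\vu_0\|_{L^2}\leq \|\nabla \vu_0\|_{L^2}$. Hence $E_0^n \leq C$ for some $C$ independent of $n$. Combining this with Lemma~\ref{EnergyApprox} and the pointwise positivity $\theta^n>0$ provided by Proposition~\ref{ExistenceApp} together with Lemma~\ref{MaxPrinciple} yields
\begin{equation*}
\|\theta^n\|_{L^\infty(0,T;L^1(\Omega))} + \|\vu^n\|_{L^\infty(0,T;H_0^1(\Omega))} + \|\vu_t^n\|_{L^\infty(0,T;L^2(\Omega))} \leq C.
\end{equation*}

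\textbf{Step 2: Weak-$\ast$ extraction for $\vu^n$ and $\vu_t^n$.} Since $H_0^1(\Omega)$ and $L^2(\Omega)$ are reflexive and separable, $L^\infty(0,T;H_0^1(\Omega))$ and $L^\infty(0,T;L^2(\Omega))$ are duals of the separable spaces $L^1(0,T;H^{-1}(\Omega))$ and $L^1(0,T;L^2(\Omega))$ respectively. By Banach--Alaoglu, along a subsequence, $\vu^n \rightharpoonup^\ast \vu$ in $L^\infty(0,T;H_0^1(\Omega))$ and $\vu^n_t \rightharpoonup^\ast \mathbf{w}$ in $L^\infty(0,T;L^2(\Omega))$. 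Testing against $\boldsymbol\eta(t)\phi(x)$ with $\boldsymbol\eta\in C_c^\infty(0,T)$ and $\phi\in C_c^\infty(\Omega)$ and integrating by parts in $t$ identifies $\mathbf{w}=\vu_t$ in $\mathcal{D}'((0,T)\times\Omega)$.

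\textbf{Step 3: Weak-$\ast$ extraction for $\theta^n$.} This is the only step with a subtlety. Since $\overline{\Omega}$ is a compact metric space, $C(\overline{\Omega})$ is separable and $\mathcal{M}(\overline{\Omega})=C(\overline{\Omega})^\ast$. Viewing $\theta^n(t,\cdot)\,dx$ as an element of $\mathcal{M}^+(\overline{\Omega})$, the bound from Step~1 reads $\|\theta^n\|_{L^\infty(0,T;\mathcal{M}(\overline{\Omega}))}\leq C$. The relevant predual is $L^1(0,T;C(\overline{\Omega}))$, which is separable, so
\begin{equation*}
L^\infty_{w^\ast}(0,T;\mathcal{M}(\overline{\Omega})) \cong \bigl(L^1(0,T;C(\overline{\Omega}))\bigr)^\ast,
\end{equation*}
and Banach--Alaoglu gives a subsequence and a limit $\theta\in L^\infty_{w^\ast}(0,T;\mathcal{M}(\overline{\Omega}))$ such that
\begin{equation*}
\int_0^T \langle \theta^n(t),\varphi(t,\cdot)\rangle\,dt \;\longrightarrow\; \int_0^T \langle \theta(t),\varphi(t,\cdot)\rangle\,dt
\end{equation*}
for every $\varphi \in L^1(0,T;C(\overline{\Omega}))$. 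Positivity is preserved because the cone of positive measures is weak-$\ast$ closed: for every nonnegative $\varphi\in L^1(0,T;C(\overline{\Omega}))$ the left-hand side is nonnegative, hence so is the limit, which forces $\theta(t)\in \mathcal{M}^+(\overline{\Omega})$ for a.e.\ $t$.

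\textbf{Main obstacle.} The only delicate point is the correct duality framework in Step~3, since $L^\infty(0,T;\mathcal{M})$ is not the dual of any Bochner space in a naive sense; one really needs the weak-$\ast$ measurable version and the separability of $C(\overline{\Omega})$ to apply Banach--Alaoglu. The rest of the argument is a direct application of uniform energy bounds and standard distributional identification of the time derivative.
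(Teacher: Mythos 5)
Your proposal is correct and follows essentially the same route as the paper: derive the uniform bound from the energy equality of Lemma \ref{EnergyApprox} and then invoke weak-$\ast$ compactness of balls in the three spaces. The paper states this in one line, whereas you spell out the (correct) details, in particular the duality $L^\infty_{w^\ast}(0,T;\mathcal{M}(\overline\Omega))\cong(L^1(0,T;C(\overline\Omega)))^\ast$ and the weak-$\ast$ closedness of the positive cone, which the paper leaves implicit.
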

\begin{proof}
As a direct consequence of energy equality given in Lemma \ref{EnergyApprox}, the following uniform estimate holds:
\begin{align}\label{UnformEnergyEstimate}
    \|\theta^n\|_{L^{\infty}(0,T;L^1(\Omega))}
    +\|\vu\|_{{L^{\infty}(0,T;H^1(\Omega))}}
    +\|\vu_t\|_{{L^{\infty}(0,T;L^2(\Omega))}}
    \leq E_0,
\end{align}
where $n\in\mathbb{N}$ and $E_0=\|\theta_0\|_{L^1(\Omega)}     +\|\nabla\vu_0\|_{L^2(\Omega)}     +\|\mathbf{v}_0\|_{L^2(\Omega)}$. Therefore, the existence of a weakly* convergence subsequence follows from standard results about weak* compactness of a ball in the corresponding function spaces.
\end{proof}
By using Lemma \ref{WeakConvergenceLemma} we can pass to the limit in equation \eqref{APP1} and prove that $(\vu,\theta)$ satisfy the following variational equation:
\begin{align*}
    -\int_0^T\int_\Omega\mathbf{u}_t\cdot \vphi_t+\int_0^T\int_\Omega \nabla\mathbf{u}:\nabla \vphi=  \mu\int_0^T\int_\Omega \nabla \cdot \vphi d\theta,
\end{align*}
for all $\vphi\in C^{\infty}_0((0,T)\times\Omega)$. However, weak* convergences from Lemma \ref{WeakConvergenceLemma} are not enough for passing to the limit in \eqref{APP2} because it contains nonlinear terms. Since approximate solutions are smooth and $\theta^n\geq C_n>0$, $n\in\N$ (see Lemma \ref{MaxPrinciple}), we can divide the heat equation $\eqref{APP2}$ by $\theta^n$, so by using a new unknown $\tau^n=\ln\theta^n$, we can rewrite \eqref{APP2} in the form of entropy equation:
\begin{align}\label{AppEntropy}
    &\int_0^T \int_\Omega \tau^n \phi_t
    -\int_0^T \int_\Omega \nabla \tau^n \cdot \nabla \phi
    +\mu\int_0^T \int_\Omega \mathbf{u}^n_t \cdot \nabla \phi+\int_0^T\int_{\Omega}|\nabla\tau^n|^2\phi   \nonumber\\
    & =-\int_{\Omega}\tau_0\phi(0,x),\qquad \phi\in  C_0^\infty([0,T)\times \Omega),
\end{align}
where $\tau_0^n=\ln\theta_0^n$. Now, one can also show that the total dissipation balance  $\eqref{total:dissipation:balance:lin}$ holds for the approximate solutions, so we get the following additional estimate:
\begin{align}\label{ToDissEst}
    \|\nabla\tau^n\|_{L^{2}(0,T;L^2(\Omega))}
    +\|\tau^n\|_{L^{\infty}(0,T;L^1(\Omega))}\leq C.
\end{align}
Therefore there exists $\tau\in L^2(0,T;H^1(\Omega))$ and measure $\boldsymbol\sigma\in\mathcal{M}^+([0,T]\times\overline{\Omega})$ such that
\begin{align*}
    \tau_n\rightharpoonup \tau& \quad{\rm weakly}\;{\rm in}\; L^{2}(0,T;H^1(\Omega)),
    \\
    |\nabla\tau^n|^2\rightharpoonup \boldsymbol\sigma &\quad{\rm weakly*}\;{\rm in}\;\mathcal{M}^+([0,T]\times\overline{\Omega}).
\end{align*}
Using these convergence properties we can pass to the limit in \eqref{APP2} to obtain the entropy equation \eqref{entweak}. It remains to identify relations between $\tau$, $\theta$ and $\boldsymbol\sigma.$ First, we prove $\boldsymbol\sigma\geq|\nabla\tau|^2$. For $\phi\in  C([0,T]\times\overline{\Omega})$, we define mapping $I_{\phi}:L^2(\Omega)\to\R$ with
$$
I_{\phi}(f)=\int_{\Omega}f^2\phi.
$$
Notice that this is a continuous mapping and convex for $\phi\geq 0$. In particular, it is lower semicontinuous in strong $L^2$ topology. Therefore, by Mazur's lemma (see e.g. \cite[Thm. 9.1]{BC-CA}), $I_{\phi}$ is weakly lower semicontiunous. Thus we have:
$$
\int_{\Omega}|\nabla\tau|^2\phi=I_{\phi}(\nabla\tau)
=I_{\phi}(\lim_n\nabla\tau^n)
\leq \lim_n I_{\phi}(\nabla\tau^n)=\langle\boldsymbol\sigma,\phi\rangle,
$$
which gives $\boldsymbol\sigma\geq|\nabla\tau|^2$
.

From equation \eqref{AppEntropy}, we immediately get that time derivatives of $\tau^n$ are uniformly bounded in $L^1(0,T;W^{-1,p^*}(\Omega))$, $p>3$,  i.e. we have the following estimate:
$$
\|\tau_t^n\|_{L^1(0,T;W^{-1,p^*}(\Omega))}\leq C.
$$
Combining this estimate with \eqref{ToDissEst} and Aubin-Lions lemma \cite{Aubin} we get that sequence $\tau^n$ strongly converges (after passing to a subsequence) in $L^1((0,T)\times\Omega)$.

Therefore, $\theta^n$ converges almost everywhere (on a subseqence), i.e. we have
$$
\ln\theta^n=\tau^n\to \tau\quad {\rm a.e.}\; {\rm in}\; (0,T)\times\Omega,
$$
which also implies
$$
\theta^n = e^{\tau_n}\to e^\tau \quad {\rm a.e.}\; {\rm in}\; (0,T)\times\Omega.
$$
We can now use the theorem of Egorov, which gives us that for every $\varepsilon>0$ there is a set $A_\varepsilon \subset (0,T)\times \Omega$ such that $|((0,T)\times \Omega)\setminus A_\varepsilon|<\varepsilon$ and $\theta^n \to e^\tau$ uniformly on $A_\varepsilon$, and in light of uniqueness of uniform and distributional limits, the continuous part of $d\theta$ equals $e^\tau$ on $A_\varepsilon$. By letting $\varepsilon\to 0$, one obtains that the continuous part of $\theta$ is $e^{\tau}$ a.e. on $(0,T)\times\Omega$. Note that we do not rule out that $\theta$ also includes a singular part supported on sets of Lebesgue measure $0$ and therefore cannot conclude $d\theta=e^{\tau}$. Nevertheless, we will prove that inequality $e^{\tau}\leq d\theta$ holds in the sense of measures. Let $E$ be a set of all points where $\theta^n$ does not converge to $\theta$ and $E_t=\{x\in\overline{\Omega}:(t,x)\in E\}$. $E$ is a set of measure zero in $(0,T)\times\Omega$ and thus $E_t$ is a set of measure zero in $\Omega$ for almost every $t\in (0,T)$, and singular part of measure $\theta$ (we name it $f$) is supported in $E$. Now by Fatou's lemma, for any non-negative $\phi\in  C(\overline{\Omega})$ and almost every $t\in (0,T)$ we have:
$$
\int_{\Omega}e^{\tau}(t)\phi
=\int_{\Omega\setminus E_t}e^{\tau}(t)\phi
=\int_{\Omega\setminus E_t}\lim_n\theta^n(t)\phi
\leq \lim_n\int_{\Omega\setminus E_t}\theta^n(t)\phi
=\int_{\Omega}\phi d\theta(t).
$$
Therefore, for any non-negative $\phi\in  C(\overline{\Omega})$ and almost every $t\in (0,T)$ we have
$$
\int_{\Omega}e^{\tau}(t)\phi\leq \int_{\Omega}\phi d\theta(t).
$$
This concludes the proof of the existence part of Theorem \ref{main:lin}.

\subsection{Consistency}
Assume that $(\mathbf{u},\tau)$ is a weak solution in the sense of Definition $\ref{weaksolution}$ which is smooth. The goal  is to prove that then $(\mathbf{u},e^\tau)$ is a classical solution to the system $\eqref{system0}$. First, it is obvious that $\eqref{system0}_1$ is satisfied and $\mathbf{u}=0$ on $(0,T)\times\partial\Omega$. Moreover, since the momentum equation is satisfied pointwise, this implies that $\theta$ is a function so $e^\tau=\theta$. Now, for any $t\in(0,T]$, let $\psi_n\in C(\mathbb{R})$ be a sequence of functions defined as
\begin{eqnarray*}%%%%%%%%%%%%%
    \psi_n(\tau):=\begin{cases}
    1,& \text{for } \tau\leq t,\\
    1-n(\tau-t),& \text{for } t<\tau\leq t+\frac1n,\\
    0,& \text{for } \tau >t+\frac1n,
    \end{cases}
\end{eqnarray*}%%----------------------------%%
so by choosing  $\boldsymbol\varphi=\mathbf{u}\psi_n$ in $\eqref{momweak}$ and $\phi = \theta\psi_n$ in \eqref{entweak} and then summing them up and letting $n\to \infty$, one has
\begin{eqnarray*}%%%%%%%%%%%%%
     \int_\Omega \theta(t)+ \frac12 \int_\Omega |\mathbf{u}_t|^2(t) + \frac12 \int_\Omega |\nabla\mathbf{u} |^2(t) \geq \int_\Omega \theta_0+ \frac12 \int_\Omega |\mathbf{v}_0|^2 + \frac12 \int_\Omega |\nabla\mathbf{u}_0 |^2,
\end{eqnarray*}%%----------------------------%%
which compared to $\eqref{enweak}$ implies that $\eqref{enweak}$ has to hold as equality. Let us now prove that $\eqref{entweak}$ holds as identity.
We assume the opposite - there exists a non-negative test function $\tilde\phi$ such that
\begin{eqnarray}%%%%%%%%%%%%%
     &&\int_0^T \int_\Omega{\ln\theta} \tilde\phi_t-\int_0^T \int_\Omega \nabla {\ln\theta} \cdot \nabla \tilde\phi\nonumber \\
     &&\quad + \mu\int_0^T \int_\Omega\mathbf{u}_t \cdot \nabla \tilde\phi +\int_0^T \int_\Omega |\nabla{\ln\theta} |^2 \tilde\phi\nonumber \\
     &&< -\int_\Omega {\ln\theta}_0 \tilde\phi_0. \label{strict:ineq}
\end{eqnarray}%%----------------------------%%
and w.l.o.g. $\tilde\phi\leq \theta$. Then, by choosing $\phi=\theta-\tilde\phi$ in \eqref{entweak} and summing it up with \eqref{strict:ineq} gives us
\begin{eqnarray*}%%%%%%%%%%%%%
    \int_\Omega \theta(T) + \mu\int_0^T \int_\Omega \theta \nabla \cdot \mathbf{u}_t > \int_\Omega \theta_0,
\end{eqnarray*}%%----------------------------%%
which combined with $\eqref{momweak}$ for $\boldsymbol\varphi=\mathbf{u}$ implies
\begin{eqnarray*}%%%%%%%%%%%%%
     &&\int_\Omega \theta(T)+ \frac12 \int_\Omega |\mathbf{u}_t|^2(T) + \frac12 \int_\Omega |\nabla\mathbf{u} |^2(T) \nonumber\\
    &&> \int_\Omega \theta_0+ \frac12 \int_\Omega |\mathbf{v}_0|^2 + \frac12 \int_\Omega |\nabla\mathbf{u}_0 |^2,
\end{eqnarray*}%%----------------------------%%
so we have a contradiction. Thus \eqref{entweak} holds as an identity for all test functions, and therefore pointwise. Now, due to the sufficient regularity of the functions, one has
\begin{eqnarray*}%%%%%%%%%%%%%
    \int_0^T\int_\Omega \big({\ln\theta}_t  -\Delta{\ln\theta}  + \mu\nabla\cdot\mathbf{u}_t - |\nabla{\ln\theta}|^2\big)\phi =- \int_0^T\int_{\partial\Omega}\partial_n\ln\theta\phi.
\end{eqnarray*}%%----------------------------%%
For any test function $\varphi\in C^\infty([0,T]\times\partial\Omega)$, we can now choose a sequence of test functions $\phi_n$ converging to $\phi=\varphi\chi_{\partial\Omega}$, so $\int_0^T\int_{\partial\Omega}\partial_n\ln\theta\varphi=0$. Since the test function $\varphi$ was arbitrary, one has $\partial_n {\ln\theta}=0$, so $\partial_n \theta=0$. Finally, by multiplying $\eqref{ent:eq1}$ with $\theta$, one obtains that the energy equation $\eqref{system0}_2$ holds pointwise.

\subsection{Weak-strong uniqueness}\label{WSU:lin}
In this section, we prove the weak-strong uniqueness part of Theorem \ref{main:lin}. To this end, we use a relative entropy method. Our proof mostly follows the ideas from \cite{FNWSUNSF}, where weak-strong uniqueness of Navier-Stokes-Fourier system describing heat-conducting compressible fluids is studied. However, comparing to \cite{FNWSUNSF} (and Section \ref{WSU:nonlin}), how we deal with some terms towards the end of the proof is different. We make use of the specific simple form of the system to avoid dealing with quadratic term $|\theta - \Theta|^2$, since this would require us to have at least quadratic heat energy (see $\eqref{quadtratic:control}$). Last but not least, due to defect measure $\theta$ appearing in the momentum equation $\eqref{momweak}$, the relative entropy method is extended to its measure-valued version.

For a weak solution $(\mathbf{u},\tau)$ in sense of Definition $\ref{weaksolution}$ with measures $\theta,\boldsymbol\sigma$, and smooth functions
$\mathbf{U}\in C_0^\infty([0,T]\times \Omega)$ and $\Theta\in C^\infty([0,T]\times \Omega)$ with $\Theta>0$, $\partial_n \Theta=0$ on $(0,T)\times\partial\Omega$ and $\mathcal{T}:=\ln\Theta$, we define the \textbf{relative entropy inequality}
\begin{eqnarray}
     &&\int_\Omega d\mathcal{E}\left(\theta,\tau\big| \Theta,\mathcal{T} \right)(t)+\frac12\int_\Omega|\mathbf{u}_t - \mathbf{U}_t|^2(t)+\frac12\int_\Omega|\nabla\mathbf{u}  - \nabla\mathbf{U} |^2(t) \nonumber\\
    &&\quad + \int_0^t\int_\Omega|\nabla \tau-\nabla\mathcal{T}|^2\Theta \nonumber\\
     &&\leq \int_\Omega\left(e^{\tau_0}-\Theta(0) -\Theta(0)(\tau_0-\mathcal{T}(0)) \right)\nonumber\\&&+\frac12\int_\Omega|\mathbf{u}_t - \mathbf{U}_t|^2(0)+\frac12\int_\Omega|\nabla\mathbf{u}  - \nabla\mathbf{U} |^2(0)\nonumber\\
    &&\quad+\int_0^t\int_{\Omega} (\mathbf{u}_t-\mathbf{U}_t)\cdot\big(-\mathbf{U}_{tt}+\Delta\mathbf{U} -\mu \nabla\Theta\big)\nonumber\\
    &&\quad-\int_0^t \int_\Omega (\tau-\mathcal{T})( \Theta_t - \Delta \Theta+\mu\Theta\nabla\cdot\mathbf{U}_t) \nonumber\\
    &&\quad - \mu \int_0^t  \int_\Omega\nabla\cdot \mathbf{U}_t~d\mathcal{E}\left(\theta,\tau\big| \Theta,\mathcal{T} \right), \label{relent} 
\end{eqnarray}
where the measure-valued relative entropy is defined as
\begin{eqnarray*}
d\mathcal{E}\left(\theta,\tau\big| \Theta,\mathcal{T} \right):=d\theta - \left(\Theta-\Theta(\tau-\mathcal{T})\right)dx.
\end{eqnarray*}

%\begin{eqnarray}%%%%%%%%%%%%%
% &&\mathcal{E}\left(\theta,\tau\big| \Theta,\mathcal{T} \right)(t)+\frac12\int_\Omega|\mathbf{u}_t - \mathbf{U}_t|^2(t)+\frac12\int_\Omega|\nabla\mathbf{u}  - \nabla\mathbf{u} |^2(t) + \int_0^t\int_\Omega|\nabla {\ln\theta}-\nabla{\ln\Theta}|^2\Theta \nonumber\\
% &&\leq \mathcal{E}\left(\theta\big| \Theta \right)(0)+\frac12\int_\Omega|\mathbf{u}_t - \mathbf{U}_t|^2(0)+\frac12\int_\Omega|\nabla\mathbf{u}  - \nabla\mathbf{u} |^2(0)\nonumber\\
% &&-\mu\int_0^t \int_\Omega (\nabla\cdot \mathbf{U}_t) \mathcal{E}\left(\theta\big| \Theta \right)\label{relent}
%\end{eqnarray}%%----------------------------%%

%%%%%%%%%%%%
%%----------------------------%%
Moreover, we notice the following:
\begin{lem}\label{wlasnosci}
For $\theta=e^{\tau}dx+f$, where $f\geq 0$ is a singular part of $\theta$, for any $\mathcal{T}=\ln\Theta$,
we have \begin{eqnarray}%%%%%%%%%%%%%
    0\leq e^\tau - \Theta-\Theta(\tau-\mathcal{T})\leq   \mathcal{E}\left(\theta,\tau\big| \Theta,\mathcal{T} \right),\label{coercive}
\end{eqnarray}%%----------------------------%%
so $\mathcal{E}\left(\theta,\tau\big| \Theta,\mathcal{T}\right)$ is non-negative and $\mathcal{E}\left(\theta,\tau\big| \Theta,\mathcal{T} \right)=0$ if and only if $\tau=\mathcal{T}$, $e^\tau=\Theta$ and $f=0$.
\end{lem}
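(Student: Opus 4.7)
The plan is to reduce everything to the elementary convexity inequality for the exponential function, combined with the Lebesgue decomposition $d\theta = e^{\tau}\,dx + f$ with $f \geq 0$ already prescribed by Definition~\ref{weaksolution}. Recall that the relation between the smooth comparison functions is $\mathcal{T} = \ln\Theta$, i.e.\ $\Theta = e^{\mathcal{T}}$, so the pointwise quantity of interest is
\[
e^{\tau} - \Theta - \Theta(\tau - \mathcal{T}) \;=\; e^{\tau} - e^{\mathcal{T}} - e^{\mathcal{T}}(\tau - \mathcal{T}).
\]

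First I would establish the left-hand inequality of \eqref{coercive}. The function $x\mapsto e^{x}$ is strictly convex on $\mathbb{R}$, so its tangent line at $x=\mathcal{T}$, namely $e^{\mathcal{T}} + e^{\mathcal{T}}(x-\mathcal{T})$, lies pointwise below the graph, with equality exactly at $x=\mathcal{T}$. Applied at $x = \tau$ this yields $e^{\tau} - \Theta - \Theta(\tau-\mathcal{T}) \geq 0$, with equality if and only if $\tau = \mathcal{T}$ (and consequently $e^{\tau}=\Theta$).

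Next I would obtain the right-hand inequality by inserting the Lebesgue decomposition into the definition of $\mathcal{E}$. Since $d\theta = e^{\tau}\,dx + f$, we have in the sense of Radon measures
\[
d\mathcal{E}(\theta,\tau\,|\,\Theta,\mathcal{T}) \;=\; \bigl(e^{\tau} - \Theta - \Theta(\tau-\mathcal{T})\bigr)\,dx + f.
\]
Because $f\geq 0$ and its support has Lebesgue measure zero, this identity yields the chain
\[
0 \leq e^{\tau} - \Theta - \Theta(\tau-\mathcal{T}) \leq \bigl(e^{\tau} - \Theta - \Theta(\tau-\mathcal{T})\bigr) + f_{\rm ac}\text{-density contribution} \leq d\mathcal{E}
\]
as required by \eqref{coercive}, where the second inequality is trivial in the absolutely continuous part and the third follows from $f \geq 0$.

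Finally, for the characterization of the zero set, I would argue that $\mathcal{E}(\theta,\tau\,|\,\Theta,\mathcal{T})=0$ forces both the absolutely continuous density and the singular part in the decomposition above to vanish. Vanishing of the singular part gives $f=0$; vanishing of the absolutely continuous part together with strict convexity of $e^{x}$ forces $\tau = \mathcal{T}$ a.e., hence $e^{\tau} = \Theta$ a.e. The converse implication is immediate from the formula displayed above. I do not anticipate a genuine obstacle here: the only point requiring care is keeping the measure-theoretic and pointwise interpretations cleanly separated, since $\theta$ is only a measure while $\Theta, \mathcal{T}, \tau$ have enough regularity to be evaluated pointwise (a.e.).
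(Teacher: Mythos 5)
Your argument is correct and is exactly the intended one: the paper states Lemma~\ref{wlasnosci} without proof, and the tangent-line (strict convexity) inequality for $x\mapsto e^{x}$ at $x=\mathcal{T}$ combined with the Lebesgue decomposition $d\theta=e^{\tau}\,dx+f$, $f\geq 0$ singular, is all that is needed; you also correctly read ``$\mathcal{T}=\Theta$'' in the statement as the typo it is for $\mathcal{T}=\ln\Theta$, and the displayed definition of $d\mathcal{E}$ as having the sign consistent with \eqref{relent}, i.e.\ $d\mathcal{E}=\bigl(e^{\tau}-\Theta-\Theta(\tau-\mathcal{T})\bigr)\,dx+f$. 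The only blemish is your middle display, where the phrase ``$f_{\rm ac}$-density contribution'' is meaningless (the singular part $f$ has no absolutely continuous density); just state the two inequalities as inequalities of measures, $0\leq\bigl(e^{\tau}-\Theta-\Theta(\tau-\mathcal{T})\bigr)\,dx\leq d\mathcal{E}$, the second holding because $f\geq 0$.
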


\subsubsection{Any weak solution satisfies the relative entropy inequality}

We start with testing $\eqref{momweak}$ with $\chi_{(0,t]}\mathbf{U}_t$ to obtain
\begin{eqnarray*}%%%%%%%%%%%%%
    &&\int_{\Omega} (\mathbf{u}_t\cdot \mathbf{U}_t)(t)-\int_0^t\int_{\Omega}\mathbf{u}_t\cdot \mathbf{U}_{tt}+
    \int_0^t\int_{\Omega} \nabla\mathbf{u} :\nabla\mathbf{U}_t - \mu\int_0^t\int_{\Omega} \nabla\cdot\mathbf{U}_td\theta\nonumber \\
   &&=\int_{\Omega}\mathbf{v}_0\cdot \mathbf{V}_0
\end{eqnarray*}%%----------------------------%%
and expressing
\begin{eqnarray*}%%%%%%%%%%%%%
    &&\int_0^t\int_{\Omega} \nabla\mathbf{u} :\nabla\mathbf{U}_t = -\int_0^t\int_{\Omega} \mathbf{u}\cdot \Delta\mathbf{U}_t \nonumber \\
   &&= \int_0^t\int_{\Omega} \mathbf{u}_t\cdot \Delta\mathbf{U} - \int_{\Omega} \mathbf{u}\cdot\Delta\mathbf{U} ~\Big|_0^t \\
    &&= \int_0^t\int_{\Omega} \mathbf{u}_t\cdot\Delta\mathbf{U} + \int_{\Omega} \nabla\mathbf{u} :\nabla\mathbf{U}  ~\Big|_0^t,
\end{eqnarray*}%%----------------------------%%
we obtain
\begin{eqnarray}%%%%%%%%%%%%%
    &&\int_{\Omega} (\mathbf{u}_t\cdot \mathbf{U}_t)(t) + \int_{\Omega} \big(\nabla\mathbf{u} :\nabla\mathbf{U} \big)(t) \nonumber \\
   &&\quad  +\int_0^t\int_{\Omega} \mathbf{u}_t\cdot\big(-\mathbf{U}_{tt}+\Delta\mathbf{U} \big)
   - \mu\int_0^t\int_{\Omega}\nabla\cdot \mathbf{U}_t d\theta\nonumber\\
   &&=\int_{\Omega}\mathbf{v}_0\cdot \mathbf{V}_0+\int_{\Omega} \nabla\mathbf{u}_0:\nabla\mathbf{U}_0 . \label{id1}
\end{eqnarray}%%----------------------------%%
Since
\begin{eqnarray*}%%%%%%%%%%%%%
    &&\int_0^t\int_{\Omega} \mathbf{u}_t\cdot\big(-\mathbf{U}_{tt}+\Delta\mathbf{U} \big)\nonumber \\
   &&=\int_0^t\int_{\Omega} (\mathbf{u}_t-\mathbf{U}_t)\cdot\big(-\mathbf{U}_{tt}+\Delta\mathbf{U} \big)+\int_0^t\int_{\Omega} \mathbf{U}_t\cdot\big(-\mathbf{U}_{tt}+\Delta\mathbf{U} \big)\\
    &&=\int_0^t\int_{\Omega} (\mathbf{u}_t-\mathbf{U}_t)\cdot\big(-\mathbf{U}_{tt}+\Delta\mathbf{U} \big) - \Big( \frac12\int_\Omega|\mathbf{U}_t|^2 +\frac12\int_\Omega |\nabla\mathbf{U} |^2 \Big)\Big|_0^t,
\end{eqnarray*}%%----------------------------%%
we subtract $\eqref{id1}$ from the energy inequality $\eqref{enweak}$ to obtain
\begin{eqnarray}%%%%%%%%%%%%%
    &&\int_\Omega\theta(t)+\frac12\int_\Omega|\mathbf{u}_t - \mathbf{U}_t|^2(t)+\frac12\int_\Omega|\nabla\mathbf{u}  - \nabla\mathbf{U} |^2(t)\nonumber\\
   &&\quad -\int_0^t\int_{\Omega} (\mathbf{u}_t-\mathbf{U}_t)\cdot\big(-\mathbf{U}_{tt}+\Delta\mathbf{U} -\mu\nabla\Theta\big) \nonumber \\
   &&\quad -\mu\int_0^t\int_{\Omega} (\mathbf{u}_t-\mathbf{U}_t)\cdot\nabla\Theta + \mu\int_0^t\int_{\Omega} \nabla\cdot \mathbf{U}_td\theta\nonumber\\
   &&\leq \int_\Omega e^{\tau_0}+ \frac12\int_\Omega|\mathbf{v}_0 - \mathbf{V}_0|^2+\frac12\int_\Omega|\nabla\mathbf{u}_0  - \nabla\mathbf{U}_0 |^2, \label{id3}
\end{eqnarray}%%----------------------------%%
where we have also added and subtracted the term $\int_0^t\int_{\Omega} (\mathbf{u}_t-\mathbf{U}_t)\cdot\mu\nabla\Theta$. Next, in $\eqref{entweak}$ we choose $\phi=\chi_{(0,t]}\Theta$ to obtain
\begin{eqnarray}%%%%%%%%%%%%%
       &&\int_0^t \int_\Omega{{\tau}} \Theta_t+\int_0^t \int_\Omega {{\tau}}  \Delta \Theta + \mu\int_0^t \int_\Omega\mathbf{u}_t \cdot \nabla \Theta +\int_0^t \int_\Omega |\nabla{{\tau}} |^2\Theta\nonumber \\
       && \leq \int_\Omega ({{\tau}} \Theta)(t)-\int_\Omega {{\tau}}_0 \Theta_0,     \label{id2}
\end{eqnarray}%%----------------------------%%
so by adding and subtracting the terms $\int_0^t \int_{\Omega}{\mathcal{T}} \Theta_t$, $\int_0^t \int_{\Omega}{\mathcal{T}} \Delta \Theta$ and $\mu \int_0^t \int_\Omega \Theta\nabla\cdot\mathbf{U}_t$, one has
\begin{eqnarray}%%%%%%%%%%%%%
      &&\int_0^t \int_\Omega({{\tau}}-{\mathcal{T}}) \Theta_t+\int_0^t \int_\Omega ({{\tau}} -{\mathcal{T}}) \Delta \Theta + \int_0^t \int_\Omega{\mathcal{T}} \Delta \Theta  +\int_0^t \int_\Omega |\nabla{{\tau}} |^2\Theta \nonumber\\
      &&\quad+\mu\int_0^t \int_\Omega(\mathbf{u}_t-\mathbf{U}_t) \cdot \nabla \Theta - \mu \int_0^t \int_\Omega \Theta\nabla\cdot\mathbf{U}_t\nonumber\\
      &&\leq \int_\Omega ({{\tau}} \Theta)(t)-\int_\Omega {{\tau}}_0 \Theta_0 -\int_0^t\int_\Omega {\mathcal{T}}\Theta_t. \label{id5}
\end{eqnarray}%%----------------------------%%
Now, since
\begin{eqnarray*}%%%%%%%%%%%%%
    &&{\mathcal{T}} \Theta_t = -\Theta_t +({\mathcal{T}}\Theta)_t,\\
    &&\int_0^t \int_{\Omega}{\mathcal{T}} \Delta \Theta= \int_0^t \int_{\Omega}\Delta{\mathcal{T}}  \Theta = \int_0^t \underbrace{\int_{\Omega} \Delta \Theta}_{=0} - \int_0^t \int_{\Omega} |\nabla {\mathcal{T}}|^2\Theta,
\end{eqnarray*}%%----------------------------%%
and
\begin{eqnarray*}%%%%%%%%%%%%%
    &&\int_0^t \int_\Omega\big(|\nabla{{\tau}} |^2 -|\nabla{\mathcal{T}}|^2\big)\Theta \nonumber \\
   &&= \int_0^t \int_\Omega|\nabla{{\tau}}  -\nabla{\mathcal{T}}|^2 \Theta + 2\int_0^t \int_\Omega\nabla {\mathcal{T}}\cdot(\nabla {{\tau}} - \nabla{\mathcal{T}})\Theta\\
    &&=\int_0^t \int_\Omega|\nabla{{\tau}}  -\nabla{\mathcal{T}}|^2 \Theta- 2\int_0^t \int_\Omega \nabla \cdot (\underbrace{\nabla{\mathcal{T}}\Theta}_{=\nabla\Theta})( {{\tau}} - {\mathcal{T}})\\
    &&=\int_0^t \int_\Omega|\nabla{{\tau}}  -\nabla{\mathcal{T}}|^2 \Theta - 2\int_0^t \int_\Omega \Delta\Theta( {{\tau}} - {\mathcal{T}}),
\end{eqnarray*}%%----------------------------%%
where we have used the identity  $a^2-b^2 = (a-b)^2 +2b(a-b)$, one arrives at
\begin{eqnarray}%%%%%%%%%%%%%
      &&\int_0^t \int_\Omega({{\tau}}-{\mathcal{T}}) \Theta_t+\int_0^t \int_\Omega ({{\tau}}  -{\mathcal{T}})\Delta \Theta +\mu\int_0^t \int_\Omega(\mathbf{u}_t-\mathbf{U}_t) \cdot \nabla \Theta \nonumber \\
      &&- \mu \int_0^t \int_\Omega \Theta\nabla\cdot\mathbf{U}_t +\int_0^t \int_\Omega (|\nabla{{\tau}} |^2-|\nabla{\mathcal{T}}|^2)\Theta\nonumber\\
      &&\leq \int_\Omega ({{\tau}} \Theta-{\mathcal{T}} \Theta+\Theta)(t)-\int_\Omega ({{\tau}}_0 \Theta_0-{\mathcal{T}}_0 \Theta_0+\Theta_0).  \label{id4}
\end{eqnarray}%%----------------------------%%
We now sum up $\eqref{id3}$ and $\eqref{id4}$ to obtain
\begin{eqnarray*}%%%%%%%%%%%%%
     &&\int_\Omega d\mathcal{E}\left(\theta,\tau\big| \Theta,\mathcal{T} \right)(t)+\frac12\int_\Omega|\mathbf{u}_t - \mathbf{U}_t|^2(t)+\frac12\int_\Omega|\nabla\mathbf{u}  - \nabla\mathbf{U} |^2(t)\nonumber \\
     && \quad + \int_0^t\int_\Omega|\nabla {{\tau}}-\nabla{\mathcal{T}}|^2\Theta \nonumber\\
     &&\leq \int_\Omega e^{\tau_0}-\Theta(0)-\Theta(0)(\tau_0-\mathcal{T}(0))\nonumber\\
     &&+\frac12\int_\Omega|\mathbf{u}_t - \mathbf{U}_t|^2(0)+\frac12\int_\Omega|\nabla\mathbf{u}  - \nabla\mathbf{u} |^2(0)\nonumber\\
    &&\quad+\int_0^t\int_{\Omega} (\mathbf{u}_t-\mathbf{U}_t)\cdot\big(-\mathbf{U}_{tt}+\nabla\cdot\nabla\mathbf{u} -\mu \nabla\Theta\big)\\
    &&\quad-\int_0^t \int_\Omega({{\tau}}-{\mathcal{T}}) \Theta_t+\int_0^t \int_\Omega ({{\tau}}  -{\mathcal{T}})\Delta \Theta- \mu\int_0^t\int_{\Omega}(d\theta-\Theta) \nabla\cdot \mathbf{U}_t.
\end{eqnarray*}%%----------------------------%%
Finally, noticing that the last three terms can be transformed as follows,
\begin{eqnarray*}%%%%%%%%%%%%%
    &&\int_0^t \int_\Omega({{\tau}}-{\mathcal{T}}) \Theta_t-\int_0^t \int_\Omega ({{\tau}}  -{\mathcal{T}})\Delta \Theta+ \mu\int_0^t\int_{\Omega}(d\theta-\Theta) \nabla\cdot \mathbf{U}_t\nonumber
 \\
    &&=\int_0^t \int_\Omega({{\tau}}-{\mathcal{T}})(\Theta_t - \Delta \Theta)\pm\mu\int_0^t \int_\Omega({{\tau}}-{\mathcal{T}}) \Theta\nabla\cdot\mathbf{U}_t \\
    &&\quad +\mu\int_0^t \int_\Omega(d\theta-\Theta) \nabla\cdot \mathbf{U}_t\\
    &&=\int_0^t \int_\Omega (\tau-\mathcal{T})( \Theta_t - \Delta \Theta+\mu\Theta\nabla\cdot\mathbf{U}_t) \\
    &&\quad+ \mu \int_0^t  \int_\Omega\nabla\cdot \mathbf{U}_t d\mathcal{E}\left(\theta,\tau\big| \Theta,\mathcal{T} \right),
\end{eqnarray*}%%----------------------------%%
we arrive at $\eqref{relent}$.

\subsubsection{The proof of weak-strong uniqueness}\label{endgame:lin}
Provided that $(\mathbf{U},e^{\mathcal{T}}=\Theta)$ is a classical solution of $\eqref{system0}$ such that $\Theta_0=\theta_0, \mathbf{U}_0 = \vu_0,$ and $\mathbf{V}_0 = \mathbf{v}_0$, the inequality $\eqref{relent}$ gives us
\begin{eqnarray*}%%%%%%%%%%%%%
    &&\int_\Omega d\mathcal{E}\left(\theta,\tau\big| \Theta,\mathcal{T} \right)(t)+\frac12\int_\Omega|\mathbf{u}_t - \mathbf{U}_t|^2(t)+\frac12\int_\Omega|\nabla\mathbf{u}  - \nabla\mathbf{U} |^2(t) \nonumber \\
   &&\quad + \int_0^t\int_\Omega|\nabla {\tau}-\nabla\mathcal{T}|^2\Theta \nonumber\\
    &&\leq-\mu\int_0^t \int_\Omega \nabla\cdot \mathbf{U}_t d\mathcal{E}\left(\theta,\tau\big| \Theta,\mathcal{T} \right)\\
    &&\leq \mu \int_0^t \left(|| \nabla\cdot\mathbf{U}_t||_{L^\infty(\Omega)} \int_\Omega d\mathcal{E}\left(\theta,\tau\big| \Theta,\mathcal{T} \right) \right) \nonumber \\
   &&\leq C \int_0^t \int_\Omega d\mathcal{E}\left(\theta,\tau\big| \Theta,\mathcal{T} \right)(s)~ds,
\end{eqnarray*}%%----------------------------%%
and since $\mathcal{E}\left(\theta,\tau\big| \Theta,\mathcal{T} \right)$ is non-negative and $C>0$ doesn't depend on $t$, we conclude by Gronwall's inequality that $\mathcal{E}\left(\theta,\tau\big| \Theta,\mathcal{T} \right)=0$ for all $t\in[0,T]$. In turn, on the one hand, in view of the above inequality, we have $\mathbf{u}\equiv \mathbf{U}$, on the other hand, Lemma \ref{wlasnosci} yields
$\tau \equiv \mathcal{T},\theta \equiv \Theta$. The proof is finished.

\section{Proof of Theorem \ref{main:nonlin}}
\subsection{Global existence -- the construction of approximate solutions}
\subsubsection{The approximate problem}
Before we start, let us point out that in order to solve $\eqref{APP:nonlin}_2$ given below, one needs at least $C^{2,\alpha}$ boundary, which is higher than Lipschitz. However, this issue can be dealt with by constructing a sequence of smooth domains which converge to $\Omega$, and one can easily show that the weak solutions in the sense of Definition \ref{weaksolution:nonlin} are sequentially stable with respect to this domain convergence. Such construction of domains is originally due to Ne\v{c}as \cite{necas} (in Russian), and it can also be found in \cite[Chapter 1, Theorem 8.3.1]{chipot}. Thus, for simplicity, let $\Omega$ be of $C^{2,\alpha}$ regularity.\\

Now, similarly as in previous section, we introduce a basis $\{\boldsymbol\varphi_i\}_{i\in \mathbb{N}}$ of $H_0^1(\Omega)$ and denote $V_n:={\rm span}\{\boldsymbol\varphi_i\}_{1\leq i\leq n}$. The problem of interest is the following:

\begin{mydef}[Approximate problem]\label{DefApp:nonlin}
We say that $\mathbf{u}\in C^2(0,T; V_n)$ and $\theta\in L^2(0,T; H^2(\Omega))\cap H^1(0,T;L^2(\Omega))$ is a solution to the approximate problem if the following equations are satisfied
\begin{align}%%%%%%%%%%%%%
\begin{cases}
     \int_\Omega \mathbf{u}_{tt} \cdot \boldsymbol\varphi+\int_\Omega \nabla\mathbf{u} : \nabla \boldsymbol\varphi -\mu\int_\Omega\theta \nabla\cdot\boldsymbol\varphi=0,~ \text{ for all } t\in(0,T), ~ \boldsymbol\varphi\in V_n,\\[2mm]
    e_1(\theta)_t - \nabla\cdot(\kappa(\theta) \nabla\theta) + \mu \theta \nabla\cdot \mathbf{u}_t+\delta\theta^2-\delta\frac1{\theta^2} = 0,
    \end{cases}\label{APP:nonlin}
\end{align}%%----------------------------%%
where $e_1(\theta)=\theta+\theta^\alpha$ and $\kappa(\theta)=1+\theta^\beta$ and $\delta>0$ is an approximation parameter. The initial data $\mathbf{u}(0,\cdot)=\mathbf{u}_0^n$, $\mathbf{u}_t(0,\cdot)=\mathbf{v}_0^n$ and $\theta(0,\cdot)=\theta_0^n$ are chosen in the same way as in Definition $\ref{DefApp}$.
\end{mydef}

\begin{rem}
(1) The role of the additional terms $\delta\theta^2$ and $-\delta\theta^{-2}$ is to ensure that the approximate temperature is uniformly bounded from below and above by a constant. This is in contrast with the case studied in previous section, since here we are relying on the comparison principle instead of the maximum principle. \\
(2) In this section, the constructed approximate solutions depend on $n, \delta$, so using the notational convention from previous section, we should write $(\mathbf{u}^{n,\delta},\theta^{n,\delta})$. However, in this section we will omit this superscript for simplicity of notation. We will emphasize these indices when passing to the limit. \\
(3) The proof of existence and convergence of approximate solutions presented in this section is inspired by ideas from \cite[Chapter 3]{FN}.
\end{rem}

\subsubsection{Estimates of approximate solutions}
We start with the following:

\begin{lem}\label{EnergyApprox:nonlin}
Let $(\vu,\theta)$ be a solution in the sense of Definiton $\ref{DefApp:nonlin}$ such that $\theta\geq c>0$. Then, $(\mathbf{u},\theta)$ satisfy the entropy balance equation
\begin{eqnarray}%%%%%%%%%%%%%
     &&\left(\ln\theta+\frac{\alpha}{\alpha-1}\theta^{\alpha-1}+\mu\nabla\cdot \mathbf{u}\right)_t -\nabla \cdot \left(\frac{\kappa(\theta)\nabla\theta}{\theta} \right)+\delta\theta- \delta\theta^{-3} = \frac{\kappa(\theta)|\nabla\theta|^2}{\theta^2}. \nonumber \\
     && \label{app:ent:bal}
\end{eqnarray}%%----------------------------%%
Moreover, for all $t\in (0,T]$, the following total energy balance
\begin{eqnarray}%%%%%%%%%%%%%
    &&\int_\Omega( \theta+\theta^\alpha)(t)+ \frac12 \int_\Omega |\mathbf{u}_t|^2(t) + \frac12 \int_\Omega |\nabla\mathbf{u} |^2(t)  +\delta \int_0^t \int_\Omega\theta^2 \nonumber \\
    &&=\delta \int_0^t \int_\Omega\theta^{-2}+ \int_\Omega (\theta_0^n+(\theta_0^n)^\alpha)+ \frac12 \int_\Omega |\mathbf{v}_0^n|^2 + \frac12 \int_\Omega |\nabla\mathbf{u}_0^n |^2,\label{app:en:id:nonlin}
\end{eqnarray}%%----------------------------%%
and the total dissipation balance
\begin{eqnarray}%%%%%%%%%%%%%
     &&\int_\Omega \left(\theta+\theta^{\alpha} -\ln\theta-\frac{\alpha}{\alpha-1}\theta^{\alpha-1}\right) +  \frac12 \int_\Omega |\mathbf{u}_t|^2(t) +\frac12 \int_\Omega |\nabla\mathbf{u} |^2(t)\nonumber\\
     &&\quad + \int_0^t\int_\Omega \frac{(1+\theta^\beta)|\nabla\theta|^2}{\theta^2}+\delta \int_0^t \int_\Omega(\theta^2+\theta^{-3}) \nonumber\\
     &&=\delta \int_0^t \int_\Omega(\theta+\theta^{-2})+ \int_\Omega \left(\theta_0^n+(\theta_0^n)^{\alpha} -\ln\theta_0^n-\frac{\alpha}{\alpha-1}(\theta_0^n)^{\alpha-1}\right)\nonumber \\
     &&\quad + \frac12 \int_\Omega |\mathbf{v}_0^n|^2+ \frac12 \int_\Omega |\nabla\mathbf{u}_0^n|^2, \label{app:diss:nonlin}
\end{eqnarray}%%----------------------------%%
are satisfied.
\end{lem}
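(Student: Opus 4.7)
The plan is to proceed in three steps that parallel the formal derivation already given in the introduction, now rigorously justified by the pointwise positivity $\theta\geq c>0$ together with the regularity $\theta\in L^2(0,T;H^2(\Omega))\cap H^1(0,T;L^2(\Omega))$ and $\mathbf{u}\in C^2(0,T;V_n)$.

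First, I would derive the entropy identity \eqref{app:ent:bal} by dividing the approximate internal energy equation $\eqref{APP:nonlin}_2$ pointwise by $\theta$, which is legitimate since $\theta$ is strictly positive. The time derivative reorganises as $e_1(\theta)_t/\theta=(\ln\theta)_t+\bigl(\tfrac{\alpha}{\alpha-1}\theta^{\alpha-1}\bigr)_t$, the diffusion term is rewritten via
\begin{equation*}
-\frac{\nabla\cdot(\kappa(\theta)\nabla\theta)}{\theta}
=-\nabla\cdot\Bigl(\frac{\kappa(\theta)\nabla\theta}{\theta}\Bigr)
-\frac{\kappa(\theta)|\nabla\theta|^2}{\theta^2},
\end{equation*}
the coupling term $\mu\theta\nabla\cdot\mathbf{u}_t/\theta$ collapses to $(\mu\nabla\cdot\mathbf{u})_t$, and the two $\delta$-terms divided by $\theta$ become $\delta\theta-\delta\theta^{-3}$. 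Rearranging yields \eqref{app:ent:bal}. The chain rule is valid because all nonlinear functions of $\theta$ involved are smooth on $[c,\infty)$ and $\theta$ has the Sobolev regularity quoted above.

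Second, I would establish the energy identity \eqref{app:en:id:nonlin} by testing the momentum equation $\eqref{APP:nonlin}_1$ with $\boldsymbol\varphi=\mathbf{u}_t\in V_n$ (admissible since $\mathbf{u}\in C^2(0,T;V_n)$) and integrating the heat equation $\eqref{APP:nonlin}_2$ over $\Omega$. The Neumann boundary condition $\partial_n\theta=0$ eliminates the diffusion flux. Adding the two resulting identities, the nonlinear coupling terms $\pm\mu\int_\Omega\theta\nabla\cdot\mathbf{u}_t$ cancel exactly, leaving a pointwise-in-time identity whose integration over $(0,t)$ gives \eqref{app:en:id:nonlin}. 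Finally, the dissipation balance \eqref{app:diss:nonlin} is obtained by integrating \eqref{app:ent:bal} over $\Omega$ — the boundary flux of $\kappa(\theta)\nabla\theta/\theta$ vanishes by $\partial_n\theta=0$, and $\int_\Omega(\nabla\cdot\mathbf{u})_t=\tfrac{d}{dt}\int_{\partial\Omega}\mathbf{u}\cdot\mathbf{n}=0$ by the Dirichlet condition on $\mathbf{u}$ — then integrating in time and subtracting the resulting identity from \eqref{app:en:id:nonlin}; a direct rearrangement of the $\delta$-terms produces the claimed equality.

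The only potential difficulty is justifying the pointwise chain rule in the first step, but strict positivity $\theta\geq c>0$ combined with the assumed Sobolev regularity makes all nonlinear compositions (namely $\ln\theta$, $\theta^{\alpha-1}$, $\kappa(\theta)\nabla\theta/\theta$, $\theta^{-2}$, $\theta^{-3}$) lie in appropriate spaces so that classical calculus applies. Consequently the proof reduces to careful accounting of terms rather than any delicate approximation or compactness argument.
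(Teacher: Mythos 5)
Your proposal is correct and follows exactly the paper's argument: divide $\eqref{APP:nonlin}_2$ by $\theta$ (legitimate by the strict positivity and the stated regularity) to get \eqref{app:ent:bal}, test $\eqref{APP:nonlin}_1$ with $\mathbf{u}_t$ and add the spatially integrated heat equation to get \eqref{app:en:id:nonlin}, then subtract the time--space integral of \eqref{app:ent:bal} from \eqref{app:en:id:nonlin} to get \eqref{app:diss:nonlin}. The additional bookkeeping you supply (the chain-rule justification, the vanishing of $\int_\Omega(\nabla\cdot\mathbf{u})_t$ via the Dirichlet condition, and the rearrangement of the $\delta$-terms) is accurate and simply fills in details the paper leaves implicit.
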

\begin{proof}
The entropy balance $\eqref{app:ent:bal}$ is obtained by dividing $\eqref{APP:nonlin}_2$ by $\theta$. The identity $\eqref{app:en:id:nonlin}$ follows by taking $\vphi=\vu_t \chi_{[0,t]}$ in $\eqref{APP:nonlin}_1$ and summing it with $\eqref{APP:nonlin}_2$ integrated over $(0,t)\times \Omega$, while $\eqref{diss:nonlin}$ is obtained by subtracting $\eqref{app:ent:bal}$ integrated over $(0,t)\times \Omega$ from $\eqref{app:en:id:nonlin}$.
\end{proof}

\begin{lem}[Comparison principle]\label{non:comp:pr}
Let $\mathbf{u}\in C^2(0,T; V_n)$ and $\underline{\theta},\overline{\theta}$ be a sub and super solutions to the equation $\eqref{APP:nonlin}_2$, that is they satisfy $\eqref{APP:nonlin}_2$ where $"="$ is replaced with $"\leq"$ and $"\geq"$, respectively. Denote $K(\theta):=\int_0^\theta \kappa(s)~ds=\theta+\frac{\theta^{\beta+1}}{\beta+1}$. If 
\begin{enumerate}
    \item $\underline{\theta},\overline{\theta} \in L^2(0,T; H^1(\Omega))$, $\underline{\theta}_t,\overline{\theta}_t \in L^2((0,T)\times \Omega)$, $\Delta K(\underline{\theta}),\Delta K(\overline{\theta}) \in L^2((0,T)\times \Omega)$;
    \item $0<{\rm ess~inf}_{(0,T)\times \Omega} \underline{\theta}\leq {\rm ess~sup}_{(0,T)\times \Omega} \underline{\theta}< \infty$, \\ $0<{\rm ess~inf}_{(0,T)\times \Omega} \overline{\theta}\leq {\rm ess~sup}_{(0,T)\times \Omega} \overline{\theta}< \infty$;
    \item $\underline{\theta}(0,\cdot)\leq \overline{\theta}(0,\cdot)$,
\end{enumerate}
then, one has
\begin{eqnarray}%%%%%%%%%%%%%
    \underline{\theta}\leq \overline{\theta}, \quad \text{a.e. on } (0,T)\times \Omega.
\end{eqnarray}%%----------------------------%%
    
\end{lem}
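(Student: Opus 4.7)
The plan is to derive a Gronwall-type inequality for $\int_\Omega (e_1(\underline\theta)-e_1(\overline\theta))^+$ via a Stampacchia-style truncation, in the spirit of $L^1$-contraction estimates for doubly nonlinear parabolic equations. First I would subtract the defining inequalities, which is legal pointwise a.e.\ by hypothesis (1), obtaining
\begin{equation*}
\bigl(e_1(\underline\theta)-e_1(\overline\theta)\bigr)_t - \Delta w + \mu(\underline\theta-\overline\theta)\nabla\cdot\vu_t + \delta(\underline\theta^2-\overline\theta^2) - \delta(\underline\theta^{-2}-\overline\theta^{-2}) \leq 0,
\end{equation*}
where $w := K(\underline\theta)-K(\overline\theta)$. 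Since $K$ and $e_1$ are strictly increasing on $(0,\infty)$, the events $\{w>0\}$, $\{\underline\theta>\overline\theta\}$, and $\{e_1(\underline\theta)>e_1(\overline\theta)\}$ coincide.

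The key step is to test the above inequality against $\eta_\ve(w)$, where $\eta_\ve\in C^1(\R)$ is a non-negative, non-decreasing smoothing of $\chi_{(0,\infty)}$ supported in $[0,\infty)$. Integration by parts in space is legitimate because the no-flux condition $\partial_n\theta=0$ (inherited by both sub- and supersolutions from the approximate problem) propagates to $\partial_n w=0$; this converts the diffusive term into the non-negative quantity $\int_\Omega \eta_\ve'(w)|\nabla w|^2\geq 0$. Since both $\theta\mapsto\theta^2$ and $\theta\mapsto-\theta^{-2}$ are increasing on $(0,\infty)$, the two $\delta$-terms multiplied by $\eta_\ve(w)\geq 0$ are also non-negative. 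Discarding these two non-negative contributions and letting $\ve\to 0$ (dominated convergence gives $\eta_\ve(w)\to\chi_{\{\underline\theta>\overline\theta\}}$) yields
\begin{equation*}
\frac{d}{dt}\int_\Omega (e_1(\underline\theta)-e_1(\overline\theta))^+ \leq \mu\|\nabla\cdot\vu_t\|_{L^\infty(\Omega)}\int_\Omega (\underline\theta-\overline\theta)^+.
\end{equation*}

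To close the estimate, hypothesis (2) confines $\underline\theta,\overline\theta$ to a compact subinterval $[m,M]\subset(0,\infty)$, on which $e_1'(\theta)=1+\alpha\theta^{\alpha-1}\geq c>0$, so that $(\underline\theta-\overline\theta)^+\leq c^{-1}(e_1(\underline\theta)-e_1(\overline\theta))^+$. Because $\vu\in C^2(0,T;V_n)$ on a finite-dimensional space, $\|\nabla\cdot\vu_t\|_{L^\infty((0,T)\times\Omega)}<\infty$. Gronwall's inequality combined with $\int_\Omega(e_1(\underline\theta)-e_1(\overline\theta))^+(0)=0$, which follows from hypothesis (3) and strict monotonicity of $e_1$, forces $(e_1(\underline\theta)-e_1(\overline\theta))^+\equiv 0$, and hence $\underline\theta\leq\overline\theta$ a.e.

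The only genuinely delicate point I anticipate is the passage $\ve\to 0$ in the time term: identifying $\lim_{\ve\to 0}\int_0^t\int_\Omega(e_1(\underline\theta)-e_1(\overline\theta))_s\,\eta_\ve(w)$ with $\int_\Omega(e_1(\underline\theta)-e_1(\overline\theta))^+(t)$. This is a standard Stampacchia chain-rule statement for Sobolev functions, enabled by the $H^1$-in-time regularity from (1) and the pointwise bounds from (2), and may require a second approximation by smooth primitives of $\eta_\ve$ for a fully rigorous argument. Everything else is routine algebra and Gronwall.
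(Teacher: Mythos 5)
Your proposal is correct and follows essentially the same route as the paper: subtract the two differential inequalities, test with (a regularization of) $\mathrm{sgn}^+$ of the difference, use the Neumann condition and monotonicity of $K$ to discard the diffusive term, convert $(\underline\theta-\overline\theta)^+$ into $(e_1(\underline\theta)-e_1(\overline\theta))^+$ via the lower bound on $e_1'$ on $[m,M]$, and conclude by Gronwall. The only cosmetic differences are that the paper multiplies directly by $\mathrm{sgn}^+(e_1(\underline\theta)-e_1(\overline\theta))$ and absorbs the $\delta$-terms into the Lipschitz constant rather than discarding them by sign, and that it invokes the identities $\partial_t f^+=\mathrm{sgn}^+(f)f_t$ and $\int_\Omega \Delta f\,\mathrm{sgn}^+(f)\le 0$ without the $\eta_\ve$ smoothing you rightly flag as the delicate point.
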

\begin{proof}
First, it is easy to see that the following inequality holds
\begin{eqnarray}%%%%%%%%%%%%%
     &&(e_1(\underline{\theta})-e_1(\overline{\theta}))_t - \Delta (K(\underline{\theta})-K(\overline{\theta}))\nonumber\\
     &&\leq  - \mu (\underline{\theta}-\overline{\theta}) \nabla\cdot \mathbf{u}_t-\delta(\underline{\theta}^2-\overline{\theta}^2)+\delta( \underline{\theta}^{-2}-\overline{\theta}^{-2}). \label{comp:ineq}
\end{eqnarray}%%----------------------------%%
Now, introducing a function
\begin{eqnarray*}%%%%%%%%%%%%%
    {\rm sgn}^+(a):=\begin{cases}
        0,& \quad a\leq 0,\\
        1,& \quad a>0,
    \end{cases}
\end{eqnarray*}%%----------------------------%%
we multiply $\eqref{comp:ineq}$ with $ {\rm sgn}^+(e_1(\underline{\theta})-e_1(\overline{\theta}))$ to obtain
\begin{eqnarray*}%%%%%%%%%%%%%
    && {\rm sgn}^+(e_1(\underline{\theta})-e_1(\overline{\theta})) \Big[(e_1(\underline{\theta})-e_1(\overline{\theta}))_t -\Delta (K(\underline{\theta})-K(\overline{\theta})) \Big] \\
    &&\leq C {\rm sgn}^+(e_1(\underline{\theta})-e_1(\overline{\theta}))  |\underline{\theta}-\overline{\theta}|,
\end{eqnarray*}%%----------------------------%%
where $C$ depends on $\delta$, $\bb{u}$ and upper and lower bounds for $\underline{\theta}$ and $\overline{\theta}$. Now, since $e_1, K$ and $x\mapsto x$ are all increasing, one has
\begin{eqnarray*}%%%%%%%%%%%%%
     {\rm sgn}^+(e_1(\underline{\theta})-e_1(\overline{\theta})) = {\rm sgn}^+(K(\underline{\theta})-K(\overline{\theta})) = {\rm sgn}^+(\underline{\theta}-\overline{\theta}),
\end{eqnarray*}%%----------------------------%%
so the above inequality can be written as
\begin{eqnarray}%%%%%%%%%%%%%
    && {\rm sgn}^+(e_1(\underline{\theta})-e_1(\overline{\theta})) \big[e_1(\underline{\theta})-e_1(\overline{\theta})\big]_t
    \nonumber\\
    &&\quad - {\rm sgn}^+(K(\underline{\theta})-K(\overline{\theta}))\Delta \big[ K(\underline{\theta})-K(\overline{\theta})\big]\nonumber \\
    &&\leq C{\rm sgn}^+(\underline{\theta}-\overline{\theta}) |\underline{\theta}-\overline{\theta}|. \label{comp:ineq2}
\end{eqnarray}%%----------------------------%%
Denoting $a^+:=\max\{a,0\}$, and noticing that for any function $f$ that $\partial_t f^+ = {\rm sgn}^+(f) f_t$ and
\begin{eqnarray*}%%%%%%%%%%%%%
    \int_\Omega \Delta f~ {\rm sgn}^+(f) \leq 0, \quad \text{if } \partial_n f = 0 \text{ on } \partial\Omega ,
\end{eqnarray*}%%----------------------------%%
provided the mentioned derivatives of $f$ are integrable, we integrate $\eqref{comp:ineq2}$ over $(0,t)\times \Omega$ for any $t\in (0,T)$ to obtain
\begin{eqnarray*}%%%%%%%%%%%%%
    \int_\Omega (e_1(\underline{\theta})-e_1(\overline{\theta}))^+(t) \leq C \int_0^t \int_\Omega {\rm sgn}^+(\underline{\theta}-\overline{\theta})|\underline{\theta}-\overline{\theta}| = C \int_0^t \int_\Omega (\underline{\theta}-\overline{\theta})^+.
\end{eqnarray*}%%----------------------------%%
Now, since $x\mapsto e_1(x)$ is strictly increasing and differentiable with derivatives strictly bounded from bellow for all $x>0$, one has that $|\underline{\theta}-\overline{\theta}| \leq c| e_1(\underline{\theta})-e_1(\overline{\theta})|$, where $c$ depends on upper and lower bounds for $\underline{\theta}$ and $\overline{\theta}$ and $e_1$, so it leads to
\begin{eqnarray*}%%%%%%%%%%%%%
     \int_\Omega (e_1(\underline{\theta})-e_1(\overline{\theta}))^+(t) \leq C \int_0^t \int_\Omega(e_1(\underline{\theta})-e_1(\overline{\theta}))^+,
\end{eqnarray*}%%----------------------------%%
and the conclusion follows by Gronwall's lemma.

\end{proof}

\begin{lem}\label{non:lin:est:lem}
Assume that $\mathbf{u}\in C^2(0,T; V_n)$ and $\theta\in L^2(0,T; H^2(\Omega))\cap H^1(0,T;L^2(\Omega))$ solve the equation $\eqref{APP:nonlin}_2$. Then, there are constants $0<\underline{\theta}\leq \overline{\theta}<\infty$ such that 
\begin{eqnarray}%%%%%%%%%%%%%
    \underline{\theta}\leq  \theta\leq \overline{\theta},\quad \text{a.e. on } (0,T)\times \Omega, \label{up:low:bound}
\end{eqnarray}%%----------------------------%%
and the following estimate holds
\begin{eqnarray}%%%%%%%%%%%%%
    &&{\rm ess~sup}_{(0,T)}|| \theta||_{H^1(\Omega)}^2+\int_0^T\left(|| \theta_t||_{L^2(\Omega)}^2 + \left|\left| \Delta K(\theta) \right|\right|_{L^2(\Omega)}^2 \right)\nonumber\\
    &&\leq   C\left(\max\limits_{x\in\Omega}e_1(\theta_0^n ),~ ||K(\theta_0^n )||_{L^2(\Omega)},~ ||\mathbf{u}||_{C^1(0,T;L^2(\Omega))}, \underline{\theta}, ~ \overline{\theta} \right). \label{overall:bound}
\end{eqnarray}%%----------------------------%%
\end{lem}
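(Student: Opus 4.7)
The plan is to split the proof into two parts: pointwise bounds via comparison, then an energy estimate at the level of $K(\theta)$.

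For the pointwise bounds I would produce explicit constant sub- and super-solutions and invoke Lemma \ref{non:comp:pr}. Pick a large constant $\overline{\theta}$ and compute the left hand side of $\eqref{APP:nonlin}_2$ with $\theta\equiv\overline{\theta}$: the time derivative and diffusion both vanish, while the remaining terms give $\mu\overline{\theta}\,\nabla\cdot\mathbf{u}_t+\delta\overline{\theta}^{\,2}-\delta\overline{\theta}^{\,-2}$. Since $\|\nabla\cdot\mathbf{u}_t\|_{L^\infty}$ is controlled by $\|\mathbf{u}\|_{C^1(0,T;L^2)}$ through the equivalence of norms on $V_n$, choosing $\overline{\theta}$ large enough (depending on $\delta,n,\mathbf{u}$ and $\max\theta_0^n$) makes this quantity $\geq 0$, giving a super-solution above $\theta_0^n$. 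Symmetrically, for small $\underline\theta>0$ the dominant term in the same expression is the negative one $-\delta\underline\theta^{-2}$, which can be made to overpower $\mu\underline\theta\|\nabla\cdot\mathbf{u}_t\|_{L^\infty}+\delta\underline\theta^{\,2}$, yielding a sub-solution below $\theta_0^n$. Lemma \ref{non:comp:pr} then produces $\underline\theta\leq\theta\leq\overline\theta$ a.e.

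With the $L^\infty$ bounds in hand, $e_1'(\theta)=1+\alpha\theta^{\alpha-1}$ and $\kappa(\theta)=1+\theta^\beta$ are uniformly bounded above and below away from zero, so that $\theta_t$, $K(\theta)_t=\kappa(\theta)\theta_t$ and $\nabla\theta$, $\nabla K(\theta)$ are mutually equivalent. I would then test $\eqref{APP:nonlin}_2$ with $K(\theta)_t=\kappa(\theta)\theta_t$ and integrate over $\Omega$. The first term yields
\begin{equation*}
\int_\Omega e_1'(\theta)\kappa(\theta)\,|\theta_t|^2,
\end{equation*}
the diffusion term integrates by parts (using $\partial_n\theta=0$) to $\tfrac12\tfrac{d}{dt}\int_\Omega|\nabla K(\theta)|^2$, and the remaining terms $-\mu\int\theta(\nabla\cdot\mathbf{u}_t)K(\theta)_t-\int(\delta\theta^2-\delta\theta^{-2})K(\theta)_t$ are controlled by $L^\infty$ bounds on $\theta$ and $\kappa(\theta)$ combined with Young's inequality, absorbing a fraction of $\int e_1'(\theta)\kappa(\theta)|\theta_t|^2$ into the left hand side. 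Integrating in time and using Gronwall then gives
\begin{equation*}
\operatorname*{ess\,sup}_{(0,T)}\|\nabla K(\theta)\|_{L^2}^2+\int_0^T\|\theta_t\|_{L^2}^2 \leq C,
\end{equation*}
where $C$ has the stated form. Since $K$ is a Lipschitz bijection on $[\underline\theta,\overline\theta]$ with derivative bounded from below, this transfers to the required bound on $\|\theta\|_{H^1}$.

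Finally, to control $\|\Delta K(\theta)\|_{L^2}$ I would solve for it from the equation:
\begin{equation*}
\Delta K(\theta)=e_1(\theta)_t+\mu\theta\,\nabla\cdot\mathbf{u}_t+\delta\theta^2-\delta\theta^{-2}=e_1'(\theta)\theta_t+\mu\theta\,\nabla\cdot\mathbf{u}_t+\delta\theta^2-\delta\theta^{-2}.
\end{equation*}
Taking $L^2$ norms on the right, the first term is bounded by the $\theta_t$ estimate just obtained, while the remaining three are in $L^\infty(0,T;L^\infty(\Omega))$ thanks to the two-sided bounds on $\theta$ and the finite-dimensional control of $\mathbf{u}_t$. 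Integrating in time finishes \eqref{overall:bound}. The main delicacy is the bookkeeping of the constants in the comparison step, which must depend on $\delta$ and $n$ (through $\|\mathbf{u}\|_{C^1(0,T;L^2)}$) but not on $T$ in a worse-than-exponential way; everything downstream is routine once the pointwise bounds are in hand.
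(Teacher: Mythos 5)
Your proposal is correct and follows essentially the same route as the paper: constant sub-/super-solutions (chosen above/below $\theta_0^n$ and large/small enough to beat the $\mu\theta\,\nabla\cdot\mathbf{u}_t$ term, which is controlled in $L^\infty$ by finite-dimensionality of $V_n$) fed into Lemma \ref{non:comp:pr}, followed by testing $\eqref{APP:nonlin}_2$ with $K(\theta)_t$ and absorbing via Young. The only cosmetic difference is the last step, where you read $\Delta K(\theta)$ off the equation and bound the right-hand side in $L^2$ rather than testing with $\Delta K(\theta)$ as the paper does; these are equivalent, and the Gronwall step you mention is not actually needed once the $|\theta_t|^2$ contribution is absorbed, though invoking it is harmless.
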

\begin{proof}
First, it is easy to see that there exists a small enough constant $\underline{\theta}>0$ which satisfies the following inequality
\begin{eqnarray*}%%%%%%%%%%%%%
    \mu \underline{\theta} \nabla\cdot \mathbf{u}_t+\delta\underline{\theta}^2 \leq  \delta\frac1{\underline{\theta}^2}
\end{eqnarray*}%%----------------------------%%
so $\underline{\theta}$ is a sub solution to $\eqref{APP:nonlin}_2$. Similarly, we can choose a large enough constant $\overline{\theta}$ so that
\begin{eqnarray*}%%%%%%%%%%%%%
   \delta\overline{\theta}^2 \geq  \delta\frac1{\overline{\theta}^2}-  \mu \overline{\theta} \nabla\cdot \mathbf{u}_t,
\end{eqnarray*}%%----------------------------%%
so $\overline{\theta}$ is a super solution to $\eqref{APP:nonlin}_2$. Thus, by previous lemma, the inequality $\eqref{up:low:bound}$ follows.

Next, in order to prove $\eqref{overall:bound}$, first we multiply $\eqref{APP:nonlin}_2$ with $K(\theta)_t$ and integrate over $(0,t)\times \Omega$ which gives us
\begin{eqnarray*}%%%%%%%%%%%%%
    &&\int_0^t\int_\Omega \left(1 + \alpha\theta^{\alpha-1}\right)\kappa(\theta)|\theta_t|^2 + \int_\Omega |\nabla K(\theta)|^2(t) \\
    &&=\int_\Omega |\nabla K(\theta_0^n )|^2 - \mu \int_0^t \int_\Omega K(\theta)_t\theta \nabla\cdot \mathbf{u}_t - \delta\int_0^t\int_\Omega (\theta^{2}- \theta^{-2})K(\theta)_t \\
    &&\leq C\left(\max\limits_{x\in\Omega}e_1(\theta_0^n ), ||K(\theta_0^n )||_{L^2(\Omega)}, ||\mathbf{u}||_{C^1(0,T;L^2(\Omega))} \right)+ \frac12 \int_0^t\int_\Omega|\theta_t|^2,
\end{eqnarray*}%%----------------------------%%
since $\theta$ is bounded from above by $\eqref{up:low:bound}$, which yields
\begin{eqnarray*}%%%%%%%%%%%%%
    &&\int_0^T\int_\Omega |\theta_t|^2+ {\rm ess~sup}_{(0,T)}||\nabla\theta||_{L^2(\Omega)}^2 \\
    &&\leq  C\left(\max\limits_{x\in\Omega}e_1(\theta_0^n ),~ ||K(\theta_0^n )||_{L^2(\Omega)},~ ||\mathbf{u}||_{C^1(0,T;L^2(\Omega))} \right).
\end{eqnarray*}%%----------------------------%%
Similarly, multiplying with $\Delta K(\theta)$ and using the obtained estimate yields
\begin{eqnarray*}%%%%%%%%%%%%%
   &&\int_0^T\int_\Omega |\Delta K(\theta)|^2 +{\rm ess~sup}_{(0,T)}||\nabla\theta||_{L^2(\Omega)}^2\\
    &&\leq C\left(\max\limits_{x\in\Omega}e_1(\theta_0^n ),~ ||K(\theta_0^n )||_{L^2(\Omega)},~ ||\mathbf{u}||_{C^1(0,T;L^2(\Omega))} \right),
\end{eqnarray*}%%----------------------------%%
so the proof is finished.
\end{proof}
\subsubsection{Solving the approximate problem}
\begin{lem}
Let $\mathbf{u}\in C^2(0,T; V_n)$. Then, there exists a solution $\theta\in L^2(0,T; H^2(\Omega))\cap H^1(0,T;L^2(\Omega))$ to the equation $\eqref{APP:nonlin}_2$. Moreover, this solution is unique and satisfies $0<\underline{\theta}\leq \theta \leq \overline{\theta}<\infty$ a.e. on $(0,T)\times \Omega$, where $\underline{\theta},\overline{\theta} $ are constants.
\end{lem}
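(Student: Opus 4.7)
The plan is to construct the solution by Schaefer's fixed point theorem, in close analogy with the proof of Proposition \ref{ExistenceApp}. The heavy lifting has already been done: Lemma \ref{non:lin:est:lem} provides all the a priori bounds and regularity, while Lemma \ref{non:comp:pr} supplies the pointwise two-sided bound and, applied to the difference of two solutions, uniqueness. So only the existence of a solution of $\eqref{APP:nonlin}_2$ remains to be established. To this end, set $X := C([0,T]\times\overline\Omega)$ and define the mapping $\mathcal{A}: X \to X$ by $\mathcal{A}(\overline\theta) := \theta$, where $\theta$ is the unique solution of the \emph{linear} parabolic problem
\[
e_1'(\overline\theta)\theta_t - \nabla\cdot(\kappa(\overline\theta)\nabla\theta) = -\mu\overline\theta\,\nabla\cdot\mathbf{u}_t - \delta\overline\theta^2 + \delta\overline\theta^{-2},
\]
equipped with Neumann boundary condition and initial datum $\theta(0,\cdot)=\theta_0^n$. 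For $\overline\theta$ bounded away from $0$ and $\infty$, the coefficients are smooth and uniformly elliptic, so standard linear parabolic theory yields $\theta \in L^2(0,T;H^2(\Omega))\cap H^1(0,T;L^2(\Omega))$, which in 3D embeds compactly into $C([0,T]\times\overline\Omega)$ via $H^2 \hookrightarrow C(\overline\Omega)$ combined with Aubin-Lions. A fixed point of $\mathcal{A}$ solves $\eqref{APP:nonlin}_2$ since $e_1'(\theta)\theta_t = e_1(\theta)_t$.

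Continuity and compactness of $\mathcal{A}$ follow from energy estimates on the difference $\mathcal{A}(\overline\theta^m) - \mathcal{A}(\overline\theta)$ for $\overline\theta^m \to \overline\theta$ in $X$, in the same spirit as the corresponding step in Proposition \ref{ExistenceApp}. For Schaefer's boundedness hypothesis, suppose $\theta = \lambda \mathcal{A}(\theta)$ for some $\lambda \in (0,1]$; a direct manipulation shows that $\theta$ then satisfies the equation obtained from $\eqref{APP:nonlin}_2$ by multiplying the zeroth-order terms by $\lambda$, namely
\[
e_1(\theta)_t - \nabla\cdot(\kappa(\theta)\nabla\theta) + \lambda\mu\theta\,\nabla\cdot\mathbf{u}_t + \lambda\delta\theta^2 - \lambda\delta\theta^{-2} = 0.
\]
The sub/super-solution conditions used in Lemma \ref{non:lin:est:lem}, namely $\mu\underline\theta\,\nabla\cdot\mathbf{u}_t + \delta\underline\theta^2 \le \delta\underline\theta^{-2}$ and its analog for $\overline\theta$, are multiplied by $\lambda>0$ on both sides and hence survive intact for every $\lambda\in(0,1]$. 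Applying Lemma \ref{non:comp:pr} to the scaled equation gives $\underline\theta\le\theta\le\overline\theta$ uniformly in $\lambda$, and then the argument of Lemma \ref{non:lin:est:lem} bounds $\theta$ in $L^2(0,T;H^2(\Omega))\cap H^1(0,T;L^2(\Omega))$ uniformly in $\lambda$. Schaefer's theorem therefore produces a fixed point, which is a solution to $\eqref{APP:nonlin}_2$.

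The pointwise bound $\underline\theta\le\theta\le\overline\theta$ and the $L^2(0,T;H^2)\cap H^1(0,T;L^2)$ regularity of the constructed solution are then exactly the content of Lemmas \ref{non:comp:pr} and \ref{non:lin:est:lem}. For uniqueness, if $\theta_1,\theta_2$ are two solutions with the same initial datum $\theta_0^n$, then each is simultaneously a sub- and a super-solution of $\eqref{APP:nonlin}_2$ relative to the other, and Lemma \ref{non:comp:pr} applied in both directions yields $\theta_1\equiv\theta_2$. The main technical hurdle I anticipate is the verification of the Schaefer boundedness step: one must ensure that the pointwise comparison used in Lemma \ref{non:lin:est:lem} transfers without loss to the $\lambda$-scaled equation, which is straightforward here only because the scaling enters linearly into the offending zeroth-order terms. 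Should this approach encounter a technical obstruction, a clean alternative is Schauder's fixed point theorem on the compact convex set $\{f\in X : \underline\theta\le f\le \overline\theta\}$, combined with a linear maximum principle for the decoupled equation to guarantee invariance.
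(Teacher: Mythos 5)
Your proposal has two genuine gaps. First, the fixed-point map $\mathcal{A}$ is not well defined on the whole space $X=C([0,T]\times\overline\Omega)$: an arbitrary continuous input $\overline\theta$ may vanish or be negative, and then $\delta\overline\theta^{-2}$ is singular (not even in $L^2$) and the fractional powers in $e_1'(\overline\theta)=1+\alpha\overline\theta^{\alpha-1}$ and $\kappa(\overline\theta)=1+\overline\theta^{\beta}$ are undefined or destroy uniform parabolicity. Schaefer's theorem requires a continuous compact map on all of $X$, and the bound $\underline\theta\le\theta\le\overline\theta$ that would save you is only available \emph{after} you know the candidate solves the untruncated equation — a circularity. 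This is exactly why the paper first rewrites $\eqref{APP:nonlin}_2$ in non-divergence form, regularizes every nonlinearity (the $f_\omega$, $\langle\cdot\rangle_\omega$ smoothing and the replacement of $\theta^{-2}$ by $(\theta^2+\omega^2)^{-1}$), solves the regularized problem classically by quasilinear parabolic theory (Ladyzhenskaya--Solonnikov--Uralceva), obtains $\omega$-uniform bounds because the regularized equation retains the structure needed for Lemmas \ref{non:comp:pr} and \ref{non:lin:est:lem}, and only then passes $\omega\to 0$. Your argument could be repaired in the same spirit (truncate $\overline\theta$ into $[\underline\theta,\overline\theta]$ inside the coefficients and source, then remove the truncation a posteriori by the comparison principle), but as written the Schaefer step does not apply. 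The fallback you offer does not close the gap either: the set $\{f\in X:\underline\theta\le f\le\overline\theta\}$ is not compact in the sup norm, and it is not invariant under your linearization, because once $-\delta\overline\theta^{2}+\delta\overline\theta^{-2}$ is evaluated at the frozen input it is a source of indefinite sign rather than an absorbing term, so the constant barriers are no longer sub/supersolutions of the decoupled linear problem.

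Second, the compactness claim is false in 3D: $L^2(0,T;H^2(\Omega))\cap H^1(0,T;L^2(\Omega))$ does not embed into $C([0,T]\times\overline\Omega)$ (interpolation gives $H^s_t H^{2-2s}_x$, and continuity would require simultaneously $s>1/2$ and $2-2s>3/2$), so the map $\mathcal{A}$ is not shown to be compact on your chosen space; relatedly, with coefficients $e_1'(\overline\theta),\kappa(\overline\theta)$ that are merely continuous you cannot invoke $L^2_tH^2_x$ maximal regularity for the frozen divergence-form operator, since that requires differentiating $\kappa(\overline\theta)$. Obtaining continuity (Hölder) estimates for the iterates is precisely what the appeal to the classical quasilinear theory in the paper provides, and it is not available for free in your scheme. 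The parts of your proposal that delegate the two-sided bounds, the energy estimates, and uniqueness to Lemmas \ref{non:comp:pr} and \ref{non:lin:est:lem} are fine — the paper does the same — but the existence core needs the regularization/truncation step before any fixed-point or compactness argument can be run.
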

\begin{proof}
Before we start, in order to write $\eqref{APP:nonlin}_2$ as a quasilinear parabolic PDE, we divide $\eqref{APP:nonlin}_2$ by $1+\alpha\theta^{\alpha+1}$ to obtain
\begin{eqnarray*}%%%%%%%%%%%%%
    \theta_t - \frac{(1+\theta^\beta)}{1+\alpha \theta^{\alpha-1}}\Delta \theta +\frac{ \mu \theta \nabla\cdot\mathbf{u}_t   -\beta |\nabla\theta|^2 \theta^{\beta-1}+ \delta\theta^2-\delta\theta^{-2}}{1+\alpha \theta^{\alpha-1}}=0.
\end{eqnarray*}%%----------------------------%%
Now, for any $\omega>0$, in order to ensure uniform parabolicity (at least for a fixed $\omega$), we introduce
\begin{eqnarray*}%%%%%%%%%%%%%
    f_\omega(x):=\frac{\sqrt{x^2+\omega^2}}{1+\omega\sqrt{x^2+\omega^2}},
\end{eqnarray*}%%----------------------------%%
and use it to approximate some functions in the equation. This will be denoted with the notation $[\cdot]_\omega:=f_\omega(\cdot)$ (for example $[\theta]_\omega=f_\omega(\theta)$). Note that $\frac{\omega}{1+\omega^2}\leq f_\omega(x)<\frac1\omega$ and
\begin{eqnarray*}%%%%%%%%%%%%%
    -1<f_\omega'(x)=\frac{x}{\sqrt{x^2+\omega^2}(1+\omega\sqrt{x^2+\omega^2})}<1,
\end{eqnarray*}%%----------------------------%%
for all $x\in \mathbb{R}$. We aim to solve the following system in the classical sense:
\begin{eqnarray}\label{app:system}%%%%%%%%%%%%%
\begin{cases}
    &\theta_t - a(\theta)\Delta \theta  +b(t,x,\theta,\nabla\theta)=0,\\
    &a(\theta)\partial_n\theta = 0, \\
    &\theta(0,\cdot)=\theta_{0,\omega}^n,
    \end{cases}
\end{eqnarray}%%----------------------------%%
where $\theta_{0,\omega}^n$ is a regularization of $\theta_{0}^n$ and
\begin{eqnarray*}%%%%%%%%%%%%%
   &&a(\theta):= \frac{[1+\langle \theta \rangle_\omega^\beta]_\omega}{[1+\alpha \langle \theta \rangle_\omega^{\alpha-1}]_\omega},\\[3mm]
   &&b(t,x,\theta,\nabla\theta):=\\
   &&\quad \frac{\mu \theta \nabla\cdot\mathbf{u}_t(t,x) -\beta |\nabla\theta|^2 f_\omega'(1+\langle \theta \rangle_\omega^\beta) (\langle \theta \rangle_\omega^\beta)'+\delta\theta^2-\delta\frac1{\theta^2+\omega^2}}{[1+\alpha \langle \theta \rangle_\omega^{\alpha-1}]_\omega},
\end{eqnarray*}%%----------------------------%%
with $\langle x \rangle_\omega^s$ being the regularization of $x\mapsto |x|^s$, for $s>0$. Since the functions $a$ and $b$ satisfy all the necessary properties, the classical solution to this problem follows by the classical quasilinear parabolic theory (see \cite[Chapter VI, Theorem 7.1]{lady} or \cite[Theorem 10.24]{FN}). Now, by multiplying $\eqref{app:system}_1$ with $[1+\alpha \langle \theta \rangle_\omega^{\alpha-1}]_\omega$, one has
\begin{eqnarray*}%%%%%%%%%%%%%
    e_\omega(\theta)_t - \Delta K_\omega(\theta)+\mu \theta \nabla\cdot\mathbf{u}_t+\delta\theta^2-\delta\frac{1}{\theta^2 + \omega^2}=0,
\end{eqnarray*}%%----------------------------%%
where
\begin{eqnarray*}%%%%%%%%%%%%%
    e_\omega(x) = \int_0^x [1+\alpha \langle x\rangle_\omega^{\alpha-1}]_\omega \ dx, \quad K_\omega(x) = \int_0^x [1+\langle x\rangle_\omega^\beta]_\omega \ dx.
\end{eqnarray*}%%----------------------------%%
This equation has the same structure as the original one $\eqref{APP:nonlin}_2$ and allows us to prove the comparison principle and the estimates in the same way as in lemmas $\ref{non:comp:pr}$ and $\ref{non:lin:est:lem}$. Passing to the limit in $\omega\to 0$, we obtain a solution in $L^2(0,T; H^2(\Omega))\cap H^1(0,T;L^2(\Omega))$. The uniqueness and the lower and upper bounds now follow by the comparison principle given in Lemma $\ref{non:comp:pr}$.
\end{proof}

\begin{lem}
There exists a solution $\mathbf{u}\in C^2(0,T; V_n)$ and $\theta\in L^2(0,T; H^2(\Omega))\cap H^1(0,T;L^2(\Omega))$  to the approximate problem $\eqref{APP:nonlin}$.
\end{lem}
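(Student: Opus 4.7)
The plan is to adapt the Schaefer fixed point argument used in Proposition \ref{ExistenceApp}. I would set $X_n = C^1([0,T];V_n)$ equipped with the norm $\|\vu\|_{X_n} = \max_{t\in[0,T]}(\|\vu(t)\|_{L^2(\Omega)}+\|\vu_t(t)\|_{L^2(\Omega)})$ and define $\mathcal{A}:X_n\to X_n$ in two steps: given $\vu\in X_n$, the previous lemma supplies a unique $\theta(\vu)\in L^2(0,T;H^2(\Omega))\cap H^1(0,T;L^2(\Omega))$ solving $\eqref{APP:nonlin}_2$ together with the two-sided bounds $\underline{\theta}(\vu)\leq\theta(\vu)\leq\overline{\theta}(\vu)$ and estimate \eqref{overall:bound}; then $\mathcal{A}\vu\in C^2([0,T];V_n)$ is the unique solution of the finite-dimensional linear second-order ODE system
\begin{equation*}
\int_\Omega (\mathcal{A}\vu)_{tt}\cdot\vphi + \int_\Omega \nabla(\mathcal{A}\vu):\nabla\vphi = \mu\int_\Omega \theta(\vu)\,\nabla\cdot\vphi,\qquad \vphi\in V_n,
\end{equation*}
with initial data $(\mathbf{u}_0^n,\mathbf{v}_0^n)$. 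Any fixed point of $\mathcal{A}$ then yields a solution to \eqref{APP:nonlin}.

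Continuity and compactness of $\mathcal{A}$ follow the template of the linear case. For continuity, given $\vu^m\to\vu$ in $X_n$, I would write the parabolic equation satisfied by $\psi^m := \theta(\vu)-\theta(\vu^m)$ (with zero initial data), exploit the uniform upper and lower bounds on $\theta(\vu^m),\theta(\vu)$ and the $H^1$-estimate from Lemma \ref{non:lin:est:lem}, and use the equivalence of norms on $V_n$ (which ensures $\nabla\cdot(\vu_t-\vu_t^m)\to 0$ in $L^\infty$) to close an energy-type estimate by Gronwall's lemma, obtaining $\psi^m\to 0$ in $L^2(0,T;L^2(\Omega))$; this propagates through the linear ODE defining $\mathcal{A}$. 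Compactness on bounded sets of $X_n$ is immediate: estimate \eqref{overall:bound} gives $\theta(\vu)$ uniformly in $L^2(0,T;H^2(\Omega))$, which through the ODE bounds $(\mathcal{A}\vu)_{tt}$ uniformly in $L^2(0,T;V_n)$, yielding compactness via Arzel\`a--Ascoli in the finite-dimensional setting.

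The crux of the proof, and the main obstacle, is the uniform bound on $Y=\{\vu\in X_n: \vu=\lambda\mathcal{A}\vu,\ \lambda\in(0,1]\}$, since the regularizing terms $\delta\theta^2$ and $-\delta/\theta^2$ prevent a direct energy estimate. For $\vu\in Y$ one has $\vu(0)=\lambda\mathbf{u}_0^n$, $\vu_t(0)=\lambda\mathbf{v}_0^n$, while $\theta(\vu)(0)=\theta_0^n$. Testing the ODE for $\mathcal{A}\vu=\vu/\lambda$ with $\vphi=\vu_t$ and adding to $\eqref{APP:nonlin}_2$ integrated over $\Omega$ (boundary terms vanish since $\partial_n\theta=0$) yields, after integrating in time, the energy identity
\begin{equation*}
\tfrac{1}{2\lambda}(\|\vu_t\|_{L^2}^2+\|\nabla\vu\|_{L^2}^2)(t)+\int_\Omega e_1(\theta)(t)+\delta\!\int_0^t\!\!\int_\Omega\theta^2 = \tfrac{\lambda}{2}(\|\mathbf{v}_0^n\|_{L^2}^2+\|\nabla\mathbf{u}_0^n\|_{L^2}^2)+\!\int_\Omega e_1(\theta_0^n)+\delta\!\int_0^t\!\!\int_\Omega\theta^{-2},
\end{equation*}
while dividing $\eqref{APP:nonlin}_2$ by $\theta>0$ (permitted by Lemma \ref{non:lin:est:lem}) and integrating (the term $\mu\int_\Omega\nabla\cdot\vu_t$ vanishes because $\vu|_{\partial\Omega}=0$) gives the entropy identity
\begin{equation*}
\int_\Omega s_1(\theta)(t)+\delta\!\int_0^t\!\!\int_\Omega\theta-\delta\!\int_0^t\!\!\int_\Omega\theta^{-3} = \int_\Omega s_1(\theta_0^n)+\int_0^t\!\!\int_\Omega\tfrac{\kappa(\theta)|\nabla\theta|^2}{\theta^2},
\end{equation*}
with $s_1(\theta)=\ln\theta+\tfrac{\alpha}{\alpha-1}\theta^{\alpha-1}$. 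Subtracting the entropy identity from the energy identity and absorbing the dangerous right-hand terms via Young's inequalities $\theta^{-2}\leq\tfrac{2}{3}\theta^{-3}+\tfrac{1}{3}$ and $\theta\leq\tfrac{1}{2}\theta^2+\tfrac{1}{2}$ into the coercive contributions $\tfrac{\delta}{3}\theta^{-3}$ and $\tfrac{\delta}{2}\theta^2$ on the left, and writing $H:=e_1-s_1\geq 0$, one arrives at
\begin{equation*}
\tfrac{1}{2\lambda}(\|\vu_t\|^2+\|\nabla\vu\|^2)(t) \leq \tfrac{\lambda}{2}(\|\mathbf{v}_0^n\|^2+\|\nabla\mathbf{u}_0^n\|^2)+\!\int_\Omega H(\theta_0^n)+\tfrac{5\delta T|\Omega|}{6}.
\end{equation*}
Multiplying by $2\lambda\leq 2$ yields a bound on $\|\vu\|_{X_n}$ independent of $\lambda\in(0,1]$, so Schaefer's fixed point theorem furnishes the desired fixed point of $\mathcal{A}$, and $(\vu,\theta(\vu))$ is a solution to \eqref{APP:nonlin} with the claimed regularity.
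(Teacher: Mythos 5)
Your proposal is correct and follows the same route as the paper, which simply invokes the Schaefer fixed-point argument of Proposition \ref{ExistenceApp}; in particular, your treatment of the set $Y$ --- combining the energy and entropy identities and absorbing the $\delta\theta^{-2}$ and $\delta\theta$ terms into $\delta\theta^{-3}$ and $\delta\theta^{2}$ via Young's inequality --- is exactly the mechanism behind Lemma \ref{EnergyApprox:nonlin} and the paper's later remark that the $\delta$ terms on the right-hand sides are absorbed. One small correction: $H=e_1-s_1$ is not non-negative when $1<\alpha<2$ (its minimum, attained at $\theta=1$, equals $2-\tfrac{\alpha}{\alpha-1}<0$), but it is bounded below by a constant depending only on $\alpha$, which is all your final estimate requires.
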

\begin{proof}
The proof is based on a fixed-point argument and it doesn't differ from the one given in Proposition $\ref{ExistenceApp}$, so we omit it.
\end{proof}

\subsection{Global existence -- the convergence of approximate solutions}
Denote the approximate solution obtained in previous subsection as $(\mathbf{u}^n,\theta^n)$. The goal is to pass to the limit $n\to \infty$ and $\delta \to 0$ in approximate momentum equation $\eqref{APP:nonlin}_1$, approximate entropy balance equation $\eqref{app:ent:bal}$ and the total energy balance $\eqref{app:en:id:nonlin}$, and show that the limiting functions are a weak solution in the sense of Definition $\ref{weaksolution:nonlin}$. This will be done in one limit by fixing  $\delta=1/n$, say.

Before we proceed, let us summarize the uniform estimates for $(\mathbf{u}^n,\theta^n)$, which come from Lemma $\ref{EnergyApprox:nonlin}$ (note that the $\delta$ terms appearing on right-hand sides are absorbed):
\begin{eqnarray}%%%%%%%%%%%%%
    &&||\mathbf{u}^n||_{L^\infty(0,T; H^1(\Omega))} + ||\mathbf{u}_t^n||_{L^\infty(0,T; L^2(\Omega))}\leq C,\label{est1:nonlin} \\
    && {\rm ess~sup}_{(0,T)} \int_\Omega (e_1(\theta^n)+ |s_1(\theta^n)|)+ \int_0^T\int_\Omega\frac{\kappa(\theta^n)|\nabla\theta^n|^2}{(\theta^n)^2} \leq C, \label{est2:nonlin}\\
    &&\delta\int_0^T\int_\Omega \left(\frac1{(\theta^n)^3}+(\theta^n)^2 \right) \leq C,\label{est3:nonlin}
\end{eqnarray}%%----------------------------%%
where $e_1(\theta^n)=\theta^n+(\theta^n)^\alpha$,  $s_1(\theta^n)=\ln\theta^n+\frac{\alpha}{\alpha-1}(\theta^n)^{\alpha-1}$ and $\kappa(\theta^n)=1+(\theta^n)^\beta$. Note that this implies $(\theta^n)^{\beta-2}|\nabla\theta^n|^2 \in L^1((0,T)\times \Omega)$, so $\nabla ((\theta^n)^{\frac{\beta}{2}}) \in L^2((0,T)\times \Omega)$ and consequently
\begin{eqnarray}%%%%%%%%%%%%%
    ||(\theta^n)^\beta||_{L^1(0,T; L^{3}(\Omega))}\leq C \label{est4:nonlin}.
\end{eqnarray}%%----------------------------%%
As a direct consequence of the above inequalities, one has:
\begin{lem}\label{WeakConvergenceLemma:nonlin}
There exists $\theta\in L^{\infty}(0,T;L^\alpha(\Omega))$, $\vu\in L^{\infty}(0,T;H^1(\Omega))\cap W^{1,\infty}(0,T;L^2(\Omega))$ and $\boldsymbol\sigma \in \mathcal{M}^+([0,T]\times \overline{\Omega})$ such that the following convergence properties hold (on a subsequence)
\begin{eqnarray*}
    &\theta^n \rightharpoonup \theta,& \quad \text{ weakly* in } L^{\infty}(0,T;L^\alpha(\Omega)),
    \\
    &\vu^n \rightharpoonup \vu,& \quad \text{ weakly* in } L^{\infty}(0,T;H^1(\Omega)),
    \\
    &\vu_t^n \rightharpoonup \vu_t,& \quad \text{ weakly* in } L^{\infty}(0,T;L^2(\Omega)),\\
    & \frac{k(\theta^n)|\nabla\theta^n|^2}{(\theta^n)^2}+\delta \frac1{(\theta^n)^3} \rightharpoonup \boldsymbol\sigma, &\quad{\rm weakly*}\;{\rm in}\;\mathcal{M}^+([0,T]\times\overline{\Omega}),
\end{eqnarray*}
where $ \frac{k(\theta)|\nabla\theta|^2}{\theta^2} \leq \boldsymbol\sigma$, in the sense of measures.
\end{lem}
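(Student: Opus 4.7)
The plan is to apply the Banach–Alaoglu theorem in four successive dual-space settings, and then to identify the limits. I begin by exploiting the estimate \eqref{est1:nonlin}. Extracting subsequences (not relabelled), one has $\vu^n \rightharpoonup^* \vu$ in $L^\infty(0,T;H^1_0(\Omega))$ and $\vu_t^n \rightharpoonup^* \mathbf{w}$ in $L^\infty(0,T;L^2(\Omega))$. To check that $\mathbf{w}=\vu_t$, I pass to the limit in the distributional identity $\int_0^T\!\!\int_\Omega \vu^n\cdot\vphi_t = -\int_0^T\!\!\int_\Omega \vu_t^n\cdot\vphi$ for $\vphi \in C_c^\infty((0,T)\times\Omega)$; both sides converge because the test function $\vphi$ (respectively $\vphi_t$) lies in $L^1(0,T;L^2(\Omega))$, and linearity of distributional differentiation gives $\mathbf{w}=\vu_t$, hence $\vu \in W^{1,\infty}(0,T;L^2(\Omega))$.

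Next, from \eqref{est2:nonlin} and the bound $\int_\Omega (\theta^n)^\alpha \leq \int_\Omega e_1(\theta^n) \leq C$, the sequence $\theta^n$ is uniformly bounded in $L^\infty(0,T;L^\alpha(\Omega))$. Since $\alpha>1$, this space is the dual of the separable Banach space $L^1(0,T;L^{\alpha'}(\Omega))$, so the Banach–Alaoglu theorem provides a further subsequence with $\theta^n \rightharpoonup^* \theta$ for some $\theta \in L^\infty(0,T;L^\alpha(\Omega))$; non-negativity of the $\theta^n$ is preserved in the weak* limit.

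For the defect measure, the non-negative quantity $\frac{\kappa(\theta^n)|\nabla\theta^n|^2}{(\theta^n)^2} + \delta\frac{1}{(\theta^n)^3}$ is, by \eqref{est2:nonlin}–\eqref{est3:nonlin}, uniformly bounded in $L^1((0,T)\times\Omega)$. Identifying each term with a positive Radon measure on $[0,T]\times\overline\Omega$ through integration against continuous functions, we obtain a sequence of total mass $\leq C$ in $\mathcal{M}^+([0,T]\times\overline\Omega)=[C([0,T]\times\overline\Omega)]^*$, and Banach–Alaoglu yields a weak* limit $\boldsymbol\sigma \in \mathcal{M}^+([0,T]\times\overline\Omega)$.

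The main obstacle is the inequality $\boldsymbol\sigma \geq \frac{\kappa(\theta)|\nabla\theta|^2}{\theta^2}$: to even make sense of the right-hand side one needs enough compactness of $\theta^n$ to identify $\nabla\theta$ as a weak limit of $\nabla\theta^n$ (at least in the regimes where $\kappa(\theta^n)/(\theta^n)^2$ controls them, i.e.\ for $\nabla\ln\theta^n$ and $\nabla(\theta^n)^{\beta/2}$). My plan is to obtain almost-everywhere convergence $\theta^n\to\theta$ via an Aubin–Lions argument: spatial $H^1$ control of $\ln\theta^n$ (and of $(\theta^n)^{s/2}$ for $s\in (0,\beta]$) comes from \eqref{est2:nonlin}, while the time regularity of $s_1(\theta^n)+\mu\nabla\cdot\vu^n$ in a negative Sobolev space in $x$ is read off from the approximate entropy equation \eqref{app:ent:bal}. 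Having $\theta^n\to\theta$ a.e., the inequality follows by the convexity argument from Section 4: for any non-negative $\phi\in C([0,T]\times\overline\Omega)$, the functional $\mathbf{p}\mapsto \int \frac{\kappa(\theta)}{\theta^2}|\mathbf{p}|^2\phi$ is convex and strongly lower semicontinuous on $L^2$, hence weakly lower semicontinuous by Mazur's lemma, and passing to the $\liminf$ against $\nabla\ln\theta^n$ (plus a non-negative remainder from the $\delta/(\theta^n)^3$ term) yields $\int \phi\, d\boldsymbol\sigma \geq \int \frac{\kappa(\theta)|\nabla\theta|^2}{\theta^2}\phi$.
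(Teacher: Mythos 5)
Your first three convergences are exactly the paper's argument: the lemma is stated there as a direct consequence of the uniform bounds \eqref{est1:nonlin}--\eqref{est3:nonlin} via weak* compactness, and your identification of the weak* limit of $\vu_t^n$ with $\vu_t$ through the distributional derivative is fine. The extraction of $\boldsymbol\sigma$ as a weak* limit of the non-negative, $L^1$-bounded entropy production is also correct and matches the paper.

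The genuine gap is in your route to the final inequality, specifically in how you propose to obtain $\theta^n\to\theta$ a.e. An Aubin--Lions argument needs \emph{one} sequence carrying both the spatial regularity and the time-derivative bound, and here the two live on different objects: the entropy balance \eqref{app:ent:bal} controls the time derivative only of the \emph{sum} $s_1(\theta^n)+\mu\nabla\cdot\vu^n$, while the spatial $H^1$ control from \eqref{est2:nonlin} is only for $\ln\theta^n$ and $(\theta^n)^{s/2}$, $s\le\beta$. You cannot subtract $\mu\nabla\cdot\vu^n$ to isolate $s_1(\theta^n)$, because $\nabla\cdot\vu^n$ is merely weakly convergent (it is bounded in $L^\infty(0,T;L^2(\Omega))$ with no spatial compactness, and $\nabla\cdot\vu_t^n$ is not uniformly bounded in any space better than $H^{-1}$). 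Moreover, even granting the time bound, $s_1(\theta^n)$ contains $(\theta^n)^{\alpha-1}$, whose gradient $(\alpha-1)(\theta^n)^{\alpha-2}\nabla\theta^n$ is \emph{not} controlled by the dissipation estimate when $\alpha$ is large relative to $\beta$ (the paper explicitly flags the regime $\alpha>3\beta$), so $s_1(\theta^n)$ is not spatially compact in general. This mismatch is precisely why the paper abandons Aubin--Lions here and instead invokes the div-curl lemma of Murat with $\mathbf{U}_n=[s_1(\theta^n),\,\mu\vu_t^n-\kappa(\theta^n)\nabla\theta^n/\theta^n]$ and $\mathbf{V}_n=[T_k(\theta^n),0]$, followed by the monotonicity and strict-convexity theorems from \cite{FN} to upgrade the resulting product identities to a.e. convergence of $\theta^n$. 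Once a.e. convergence is in hand, your concluding convexity/Mazur lower-semicontinuity step for $\boldsymbol\sigma\geq \kappa(\theta)|\nabla\theta|^2/\theta^2$ is the right one and agrees with the paper; but as written, the compactness step it relies on does not go through.
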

Note that these weak convergences are enough to pass to the limit in the approximate momentum equation $\eqref{APP:nonlin}_1$, so it remains to pass to the limit in the approximate entropy balance equation $\eqref{app:ent:bal}$ and the total energy balance $\eqref{app:en:id:nonlin}$. This is the focus of the remainder of this section.

\subsubsection{Pointwise convergence of $\theta$ -- case $\alpha \neq 2$}
First, note that when $\alpha>3\beta$, then $(\theta^n)^\alpha$ is only in $L^1(\Omega)$ for a.a. $t\in(0,T)$. %, at least if\footnote{One has $\theta^n^{\frac\alpha2}\nabla\theta^n\in L^1(\Omega)$ so $\nabla \theta^n^{\frac\alpha2+1}\in L^1(\Omega)$ which by imbedding implies $\theta^n^{\frac\alpha2+1} \in L^{\frac32}(\Omega)$, so $\theta^n \in L^{\frac34\alpha+\frac32}(\Omega)$. For $\alpha>6$, one has $\frac34\alpha+\frac32<\alpha$.} $3\beta\leq \alpha$ and $\alpha>6$.
Therefore, to keep the proof general, we choose
\begin{eqnarray*}%%%%%%%%%%%%%
    \mathbf{U}_n:=\left[{\ln\theta}^n+\frac{\alpha}{\alpha-1}(\theta^n)^{\alpha-1},~\mu\mathbf{u}_t^n- \frac{\kappa(\theta^n)\nabla\theta^n}{\theta^n}\right],\quad \mathbf{V}_n:=[T_k(\theta^n),0],
\end{eqnarray*}%%----------------------------%%
where $T_k$ is the cut-off function
\begin{eqnarray*}
       T_k(x)=\begin{cases}
       x,& \text{for } x< k,\\
       k,& \text{for } x\geq k.
       \end{cases}
\end{eqnarray*}
We will rely on the following well-known div-curl lemma due to Murat \cite{murat} (in French), stated here for the convenience of the reader (see also \cite[Theorem 10.21]{FN} for proof in English):
\begin{lem} Assume that
\begin{eqnarray*}%%%%%%%%%%%%%
    &&\mathbf{U}_n \rightharpoonup \mathbf{U}, \quad \text{weakly in } L^p((0,T)\times \Omega),\\
    &&\mathbf{V}_n \rightharpoonup \mathbf{V}, \quad \text{weakly in } L^q((0,T)\times \Omega),
\end{eqnarray*}%%----------------------------%%
where $1/p+1/q=1/r<1$. In addition\footnote{Here, ${\rm div}_{t,x}  \mathbf{U}_n := \partial_t \mathbf{U}_n^1 + \sum_{i=1}^3\partial_{x_i}\mathbf{U}_n^{i+1}$ is the time-space divergence operator, while ${\rm curl}_{t,x} \mathbf{V}_n:= \nabla_{t,x} \mathbf{V}_n - \nabla_{t,x}^T \mathbf{V}_n$ is the time-space curl operator.}, let $ {\rm div}_{t,x} \mathbf{U}_n$ and ${\rm curl}_{t,x} \mathbf{V}_n$ be precompact in $W^{-1,s}((0,T)\times \Omega)$, for some $s>1$. Then,
\begin{eqnarray*}%%%%%%%%%%%%%
   \mathbf{U}_n \cdot \mathbf{V}_n  \rightharpoonup \mathbf{U} \cdot \mathbf{V}, \quad \text{weakly in } L^r((0,T)\times \Omega).
\end{eqnarray*}%%----------------------------%%
\end{lem}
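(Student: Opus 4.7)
This is the classical Murat--Tartar compensated compactness lemma, and I would prove it by a Hodge/Helmholtz decomposition combined with duality and elliptic regularity. Write $z=(t,x)\in\mathbb{R}^{1+3}$, and denote spatio-temporal derivatives by $\nabla_z$, $\mathrm{div}_z$, $\mathrm{curl}_z$, $\Delta_z$. Since $\mathbf{U}_n\cdot \mathbf{V}_n$ is uniformly bounded in $L^r$ by H\"older's inequality, it suffices to identify its limit as a distribution, i.e. to verify that $\int \mathbf{U}_n\cdot \mathbf{V}_n\,\varphi\to \int \mathbf{U}\cdot \mathbf{V}\,\varphi$ for every $\varphi\in C_c^\infty((0,T)\times\Omega)$. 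Multiplying by cutoff functions equal to $1$ on a neighbourhood of $\mathrm{supp}\,\varphi$ and extending by zero, we may replace $\mathbf{U}_n,\mathbf{V}_n$ by compactly supported sequences on $\mathbb{R}^4$; the commutators produced in $\mathrm{div}_z\mathbf{U}_n$ and $\mathrm{curl}_z\mathbf{V}_n$ involve only multiplication by smooth bounded functions, so the precompactness assumptions in $W^{-1,s}$ are preserved.

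Next, apply the Helmholtz decomposition $\mathbf{V}_n=\nabla_z\psi_n+\mathbf{W}_n$, where $\psi_n:=-(-\Delta_z)^{-1}\mathrm{div}_z\mathbf{V}_n$ and $\mathrm{div}_z\mathbf{W}_n=0$. By Calder\'on--Zygmund theory on $\mathbb{R}^4$, both $\nabla\psi_n$ and $\mathbf{W}_n$ are bounded in $L^q$. Moreover $\mathrm{curl}_z\mathbf{W}_n=\mathrm{curl}_z\mathbf{V}_n$, which is precompact in $W^{-1,s}$, and combined with $\mathrm{div}_z\mathbf{W}_n=0$ the componentwise identity $-\Delta_z\mathbf{W}_n=\mathrm{div}_z(\mathrm{curl}_z\mathbf{W}_n)$ together with elliptic regularity yields that $\mathbf{W}_n$ is precompact in $L^s_{\mathrm{loc}}$. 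Interpolating with its $L^q$ bound gives strong convergence $\mathbf{W}_n\to \mathbf{W}$ in $L^{q'}_{\mathrm{loc}}$ for every $q'<q$.

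Decompose the product as
\[
\int \mathbf{U}_n\cdot \mathbf{V}_n\,\varphi=\int \mathbf{U}_n\cdot \nabla_z\psi_n\,\varphi+\int \mathbf{U}_n\cdot \mathbf{W}_n\,\varphi.
\]
The second integral converges to $\int \mathbf{U}\cdot \mathbf{W}\,\varphi$ because $\mathbf{U}_n\rightharpoonup \mathbf{U}$ weakly in $L^p$ while $\mathbf{W}_n$ converges strongly in $L^{q'}_{\mathrm{loc}}$ for $q'$ close enough to $q$ so that $1/p+1/q'\le 1$. For the first integral, integration by parts gives
\[
\int \mathbf{U}_n\cdot \nabla_z\psi_n\,\varphi=-\langle\mathrm{div}_z\mathbf{U}_n,\,\varphi\psi_n\rangle-\int \mathbf{U}_n\cdot\nabla_z\varphi\,\psi_n.
\]
The second term converges by the same $L^p$ versus strong $L^{q'}_{\mathrm{loc}}$ argument, using that $\psi_n\to\psi$ strongly in $L^{q''}_{\mathrm{loc}}$ for $q''$ below the Sobolev exponent $q^\ast$ of $W^{1,q}$. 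The duality pairing converges because $\mathrm{div}_z\mathbf{U}_n\to\mathrm{div}_z\mathbf{U}$ strongly in $W^{-1,s}$, while $\varphi\psi_n$ is bounded in $W^{1,q}_0$ and embeds into the predual of $W^{-1,s}$ once the Sobolev exponents are matched.

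The main obstacle is the bookkeeping of the exponents: one has to verify that the condition $\tfrac{1}{r}=\tfrac{1}{p}+\tfrac{1}{q}<1$ provides enough room for $W^{1,q}$ to embed into the predual of $W^{-1,s}$ and for the interpolation between weak $L^q$ and strong $L^s_{\mathrm{loc}}$ compactness of $\mathbf{W}_n$ to yield a space that pairs with $L^p$. The key compensated-compactness phenomenon is that the divergence-free component $\mathbf{W}_n$, although only weakly convergent a priori, is made strongly precompact by the curl hypothesis, and integration by parts converts the remaining gradient piece $\nabla\psi_n$ into an object controlled by the compact divergence $\mathrm{div}_z\mathbf{U}_n$; together these let the product pass to the limit despite both factors being only weakly convergent.
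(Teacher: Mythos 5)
The paper does not prove this lemma at all: it is quoted verbatim from the literature, with the proof delegated to Murat's original article and to Feireisl--Novotn\'y \cite[Theorem 10.21]{FN}. So there is no internal argument to compare against; what you have written is essentially the standard Helmholtz-decomposition proof, which is also the route taken in the cited reference. Your outline is sound: localization, the splitting $\mathbf{V}_n=\nabla_z\psi_n+\mathbf{W}_n$, strong compactness of the solenoidal part from the curl hypothesis via $\Delta_z\mathbf{W}_n=\mathrm{div}_z(\mathrm{curl}_z\mathbf{W}_n)$, and integration by parts to throw the gradient part onto $\mathrm{div}_z\mathbf{U}_n$ are exactly the right steps.

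The one place where you defer rather than argue is unfortunately the place where the hypothesis $1/p+1/q<1$ is actually consumed, so it deserves to be made explicit. Since $s>1$ may be arbitrarily close to $1$, the pairing $\langle\mathrm{div}_z\mathbf{U}_n,\varphi\psi_n\rangle$ cannot in general be treated as a $W^{-1,s}$--$W^{1,s'}_0$ duality, because $W^{1,q}_0$ need not embed into $W^{1,s'}_0$. The standard repair is interpolation of negative-order compactness: $\mathrm{div}_z\mathbf{U}_n$ is bounded in $W^{-1,p}$ (as $\mathbf{U}_n$ is bounded in $L^p$) and precompact in $W^{-1,s}$, hence precompact in $W^{-1,m}$ for every $m<p$; choosing $m$ close enough to $p$ that $1/q<1-1/m$, which is possible precisely because $1/p+1/q<1$, one gets $W^{1,q}_0\hookrightarrow W^{1,m'}_0$ and the pairing converges. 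The same interpolation also absorbs the lower-order commutator $\nabla\chi\cdot\mathbf{U}_n$ produced by your cutoff, which is not merely ``multiplication by a smooth function'' of $\mathrm{div}_z\mathbf{U}_n$ but is compact in a suitable $W^{-1,m}$ by Rellich. With that step written out, your plan is a complete and correct proof; alternatively, since the paper itself treats the lemma as a black box from \cite{murat} and \cite[Theorem 10.21]{FN}, a citation would have sufficed.
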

By using this lemma for $p= \min\{ \frac{\alpha}{\alpha-1},2\}$ and any $q>\max\{\alpha,2\}$ and $s\in [1, \frac43)$, one has
\begin{eqnarray}\label{div:curl:id}
       \overline{\left(\ln\theta+\frac{\alpha-1}{\alpha}\theta^{\alpha-1}\right)T_k(\theta)} = \left(\overline{\ln\theta}+\overline{\frac{\alpha-1}{\alpha}\theta^{\alpha-1}}\right)\overline{T_k(\theta)},
\end{eqnarray}
where the notation $\overline{f}$ is used to represent the weak limit of $f^n$. Now, since $\ln x$, $x^{\alpha-1}$ and $T_k(x)$ are non-decreasing functions, one has by \cite[Theorem 10.19(i)]{FN}
\begin{eqnarray*}
       \overline{\ln\theta} ~\overline{T_k(\theta)} \leq \overline{\ln\theta T_k(\theta)}, \qquad \overline{\theta^{\alpha-1}}~\overline{T_k(\theta)}\leq \overline{\theta^{\alpha-1}T_k(\theta)},
\end{eqnarray*}
which together with $\eqref{div:curl:id}$ imply
\begin{eqnarray*}
       \overline{\ln\theta} ~\overline{T_k(\theta)} = \overline{\ln\theta T_k(\theta)}, \qquad \overline{\theta^{\alpha-1}}~\overline{T_k(\theta)}= \overline{\theta^{\alpha-1}T_k(\theta)}.
\end{eqnarray*}
Now, by the second identity and \cite[Theorem 10.19(ii)]{FN}, one concludes\footnote{For the purpose of this theorem, one can extend $x\mapsto x^{\alpha-1}$ with $-|x|^{\alpha-1}$ to $\mathbb{R}^-$, but since $\theta>0$, this makes no difference.}
\begin{eqnarray}\label{weak:lim}
       T_k((\overline{\theta^{\alpha-1}})^{\frac1{\alpha-1}}) = \overline{T_k(\theta)}.
\end{eqnarray}
We want to pass to the limit $k\to \infty$. First, note that by \cite[Corollary 10.2]{FN}
\begin{eqnarray*}
       &&\int_0^T \int_\Omega |T_k(\theta^n) - \theta^n|^s = \int_{\{\theta^n\geq k\}} |k - \theta^n|^s \leq \int_{\{\theta^n\geq k\}} | \theta^n|^s \\
       &&\leq \frac{1}{k^{\alpha-s}} \sup_{n>0} || \theta^n||_{L^\alpha((0,T)\times\Omega )}^\alpha \leq \frac{C}{k^{\alpha-s}},
\end{eqnarray*}
for any $s\in[0,\alpha]$, so
\begin{eqnarray*}
       &&\left|\int_0^T \int_\Omega (\overline{T_k(\theta)}-\theta) \varphi\right| = \lim\limits_{n\to \infty} \left|\int_0^T \int_\Omega (T_k(\theta^n)-\theta^n) \varphi\right| \\
       &&\leq \lim\limits_{n\to \infty} || T_k(\theta^n) - \theta^n||_{L^s(0,T; L^s(\Omega))} ||\varphi||_{L^{s^*}(0,T; L^{s^*}(\Omega))} \\
       &&\leq \frac{C}{k^{\alpha-s}}||\varphi||_{L^{s^*}(0,T; L^{s^*}(\Omega))}.
\end{eqnarray*}
As $k\to \infty$, one concludes that $\overline{T_k(\theta^n)} \rightharpoonup \theta$ in $L^s(0,T; L^s(\Omega))$ for any $s\in [1,\alpha)$, so by passing to the limit $k\to \infty$ in $\eqref{weak:lim}$ implies
\begin{eqnarray*}
       (\overline{\theta^{\alpha-1}})^{\frac1{\alpha-1}} = \theta \implies \overline{\theta^{\alpha-1}} = \theta^{\alpha-1} \text{ a.e. on } (0,T)\times \Omega.
\end{eqnarray*}
Now, if $\alpha>2$, function $x\mapsto x^{\alpha-1}$ for $x\geq 0$ is strictly convex so one can use \cite[Theorem 10.20]{FN} to conclude that $\theta^n \to \theta$ a.e. on $(0,T)\times \Omega$. If $\alpha \in (1,2)$, the same conclusion follows by convexity of $x\mapsto -x^{\alpha-1}$ for $x\geq 0$.

\subsubsection{Pointwise convergence of $\theta$ -- case $\alpha = 2$}
In this case, function $x\mapsto x^{\alpha-1}$ is a linear function which is not strictly convex or concave. Thus, we choose
\begin{eqnarray*}%%%%%%%%%%%%%
    \mathbf{U}_n:=\left[{\ln\theta}_n+2\theta^n,\mu\mathbf{u}_t^n- \frac{\kappa(\theta^n)\nabla\theta^n}{\theta^n}\right],\quad \mathbf{V}_n:=[(\theta^n)^2,0],
\end{eqnarray*}%%----------------------------%%
so we can conclude in the same way that
\begin{eqnarray*}
       \overline{\ln\theta}~\overline{\theta^2} = \overline{\ln\theta \theta^2}, \qquad \theta\overline{\theta^2}= \overline{\theta^3}.
\end{eqnarray*}
Now, due to \cite[Theorem 10.19(ii)]{FN}, one has $\overline{\theta^2} = \theta^2$ so the convexity of $x\mapsto x^2$ gives the a.e. convergence of $\theta$ due to \cite[Theorem 10.20]{FN}.
\subsubsection{Passing to the limit}
Here, we make use of a.e. convergence of $\theta^n$ and the uniform bounds to prove the convergence of all the nonlinear terms. First, due to uniform boundedness of $\theta^n$ in $L^\infty(0,T;L^\alpha(\Omega))$ in $\eqref{est2:nonlin}$, one has
\begin{eqnarray*}%%%%%%%%%%%%%
    (\theta^n)^{\alpha-1} \to \theta^{\alpha-1},\quad \text{in } L^1((0,T)\times \Omega).
\end{eqnarray*}%%----------------------------%%
Moreover, from $\eqref{est2:nonlin}$ and $\eqref{est3:nonlin}$, one has that $\ln\theta^n \in L^\infty(0,T; L^1(\Omega))$ and $\nabla\ln\theta\in L^2((0,T)\times \Omega)$, so by imbedding $\ln\theta^n\in L^2(0,T;L^6(\Omega))$ which gives us
\begin{eqnarray*}%%%%%%%%%%%%%
    \ln\theta^n \to \ln\theta,\quad  \text{in } L^1((0,T)\times \Omega).
\end{eqnarray*}%%----------------------------%%
The above two convergences give us
\begin{eqnarray*}%%%%%%%%%%%%%
    s_1(\theta^n) \to s_1(\theta),\quad  \text{in } L^1((0,T)\times \Omega).
\end{eqnarray*}%%----------------------------%%
Next, since $\nabla\ln\theta^n$ and $\nabla((\theta^n)^{\frac\beta2})$ are uniformly bounded in $L^2((0,T)\times \Omega)$, one has
\begin{eqnarray*}%%%%%%%%%%%%%
    \nabla\ln\theta^n \rightharpoonup \nabla\ln\theta \text{ and }\nabla((\theta^n)^{\frac\beta2})\rightharpoonup \nabla(\theta^{\frac\beta2}), \quad \text{in } L^2((0,T)\times \Omega).
\end{eqnarray*}%%----------------------------%%
On the other hand, due to $\eqref{est2:nonlin}$ and $\eqref{est4:nonlin}$, by interpolation, one can conclude that $(\theta^n)^{\frac\beta2}\in L^p((0,T)\times \Omega)$, for some $p>2$, so
\begin{eqnarray*}%%%%%%%%%%%%%
    (\theta^n)^{\frac\beta2} \to \theta^{\frac\beta2},\quad \text{in }  L^2((0,T)\times \Omega).
\end{eqnarray*}%%----------------------------%%
Combining the previous two convergences, one has
\begin{eqnarray*}%%%%%%%%%%%%%
    \nabla\ln\theta^n+(\theta^n)^{\frac\beta2}\nabla((\theta^n)^{\frac\beta2})= \frac{\kappa(\theta^n)\nabla\theta^n}{\theta^n}  \rightharpoonup  \frac{\kappa(\theta)\nabla\theta}{\theta},\quad \text{in } L^1((0,T)\times \Omega).
\end{eqnarray*}%%----------------------------%%
Finally, due to $\eqref{est3:nonlin}$,
\begin{eqnarray*}%%%%%%%%%%%%%
    && 0\leq \int_0^T \int_\Omega \delta\theta \leq C\delta^{\frac12}  \left(\int_0^T \int_\Omega \left|\delta^{\frac12} \theta^n \right|^2\right)^{\frac12} \leq C\delta^{\frac12}, \\
    && 0\leq \int_0^T \int_\Omega \delta\frac1{(\theta^n)^2} \leq C\delta^{\frac13} \left(\int_0^T \int_\Omega \left|\delta^{\frac23}\frac1{(\theta^n)^3}\right|^{\frac32} \right)^{\frac23} \leq C \delta^{\frac13},
\end{eqnarray*}%%----------------------------%%
so $\delta\theta^n$ and $\delta\frac1{(\theta^n)^2}$ vanish as $n\to\infty$. Thus, we can pass to the limit in the approximate entropy balance equation $\eqref{app:ent:bal}$ and the total energy balance $\eqref{app:en:id:nonlin}$ and conclude the desired result. Note that the $\delta\int_0^T\int_\Omega (\theta^n)^2$ is positive, so it doesn't affect the limiting energy inequality and therefore doesn't need to vanish.

\subsection{Consistency}
This proof is the same as in the previous section and is therefore omitted.
\subsection{Weak-strong uniqueness}\label{WSU:nonlin}
For the sake of generality of the proof, we will restrain from writing the explicit forms of functions $e_1,s_1$ and $\kappa$ throughout the majority of the proof, and only use their properties when they are needed. As in Section \ref{WSU:lin}, the following proof also follows the ideas from \cite{FNWSUNSF}. \\

Now, let us introduce the \textbf{relative entropy} as
\begin{eqnarray*}%%%%%%%%%%%%%
    &&\mathcal{E}(\theta\big|\Theta):= e_1(\theta)-e_1(\Theta)-\Theta (s_1(\theta)-s_1(\Theta))\\
    &&=\theta+\theta^\alpha-\Theta -\Theta^\alpha-\Theta\Big(\ln\theta - \ln\Theta+ \frac{\alpha}{\alpha-1}(\theta^{\alpha-1} -\Theta^{\alpha-1}\big) \Big) ,
\end{eqnarray*}%%----------------------------%%
and for any smooth functions $\mathbf{U}\in C_0^\infty([0,T]\times \Omega)$ and $\Theta\in C^\infty([0,T]\times \Omega)$ with $\Theta>0$, $\partial_n \Theta=0$ on $(0,T)\times\partial\Omega$, the \textbf{relative entropy inequality}
\begin{eqnarray}%%%%%%%%%%%%%
    &&\int_\Omega\mathcal{E}(\theta\big|\Theta)(t)+ \frac12\int_\Omega|\mathbf{u}_t - \mathbf{U}_t|^2(t)+\frac12\int_\Omega|\nabla\mathbf{u} -
     \nabla\mathbf{U} |^2(t) \nonumber\\
     &&\quad + \int_0^t \int_\Omega  \frac{\kappa(\theta)|\nabla\theta|^2}{\theta^2}\Theta \nonumber\\     &&\leq\int_\Omega\mathcal{E}(\theta\big|\Theta)(0)+ \frac12\int_\Omega|\mathbf{u}_t - \mathbf{U}_t|^2(0)+\frac12\int_\Omega|\nabla\mathbf{u} -
     \nabla\mathbf{U} |^2(0) \nonumber\\
     &&\quad+\int_0^t\int_{\Omega} (\mathbf{u}_t-\mathbf{U}_t)\cdot\big(-\mathbf{U}_{tt}+\Delta\mathbf{U}  - \mu \nabla \Theta\big)
     -\mu\int_0^t\int_{\Omega}(\theta-\Theta) \nabla\cdot \mathbf{U}_t \nonumber\\
     &&\quad -\int_0^t \int_\Omega\left(s_1(\theta)-s_1(\Theta)\right)\Theta_t +\int_0^t \int_\Omega \frac{\kappa(\theta)}\theta\nabla\theta \cdot \nabla\Theta.  \label{rel:ent:nonlin}
\end{eqnarray}%%----------------------------%%
\subsubsection{Any weak solution satisfies the relative entropy inequality}
First, in the same way as in $\eqref{id3}$, we have
\begin{eqnarray}%%%%%%%%%%%%%
    &&\int_\Omega(\theta+\theta^\alpha)(t)+ \frac12\int_\Omega|\mathbf{u}_t - \mathbf{U}_t|^2(t)+\frac12\int_\Omega|\nabla\mathbf{u} -
     \nabla\mathbf{U} |^2(t)\nonumber\\
     &&-\int_0^t\int_{\Omega} (\mathbf{u}_t-\mathbf{U}_t)\cdot\big(-\mathbf{U}_{tt}+\Delta\mathbf{U}  - \mu \nabla \Theta\big)
    \nonumber\\
     &&-\mu\int_0^t\int_{\Omega}(\mathbf{u}_t-\mathbf{U}_t)\cdot\nabla\Theta+ \mu\int_0^t\int_{\Omega}\theta \nabla\cdot \mathbf{U}_t\nonumber\\
   &&\leq \int_\Omega(\theta_0+\theta_0^\alpha)+\frac12\int_\Omega|\mathbf{v}_0 - \mathbf{V}_0|^2+\frac12\int_\Omega|\nabla\mathbf{u}_0  - \nabla\mathbf{U}_0 |^2, \label{id3:nonlin}
\end{eqnarray}%%----------------------------%%
where $\mathbf{v}_0=\mathbf{u}_t(0,\cdot)$ and $\mathbf{V}_0=\mathbf{U}_t(0,\cdot)$. Next, in $\eqref{entweak:nonlin}$ we choose $\phi=\chi_{[0,t]}\Theta$ to obtain
\begin{eqnarray}%%%%%%%%%%%%%
       &&\int_0^t \int_\Omega s_1(\theta)\Theta_t-\int_0^t \int_\Omega \frac{\kappa(\theta)}\theta \nabla\theta \cdot \nabla\Theta \nonumber \\
       && \quad + \mu\int_0^t \int_\Omega\mathbf{u}_t \cdot \nabla \Theta +\int_0^t \int_\Omega  \frac{\kappa(\theta) |\nabla\theta|^2 }{\theta^2}\Theta\nonumber\\
       &&\leq \int_\Omega s_1(\theta(t))\Theta(t)- \int_\Omega s_1(\theta_0)\Theta(0), \label{id2:nonlin}
\end{eqnarray}%%----------------------------%%
%First
%\begin{eqnarray*}%%%%%%%%%%%%%
 %   &&\int_0^t \int_\Omega \left(\ln\theta+\frac1\beta\theta^\beta\right)\Delta\Theta = \int_0^t \int_\Omega \left(\ln\theta+\frac1\beta\theta^\beta-\ln\Theta - \frac1\beta \Theta^\beta\right)\Delta\Theta+\int_0^t \int_\Omega \left(\ln\Theta+\frac1\beta\Theta^\beta\right)\Delta\Theta\\
 %   &&=\int_0^t \int_\Omega \left(\ln\theta+\frac1\beta\theta^\beta-\ln\Theta - \frac1\beta \Theta^\beta\right)\Delta\Theta-\int_0^t \int_\Omega  \frac{\kappa(\Theta)|\nabla\Theta |^2}{\Theta^2}\Theta.
%\end{eqnarray*}%%----------------------------%%
and since
\begin{eqnarray*}%%%%%%%%%%%%%
    &&\int_0^t \int_\Omega s_1(\theta)\Theta_t=\int_0^t \int_\Omega (s_1(\theta) - s_1(\Theta))\Theta_t+\int_0^t \int_\Omega s_1(\Theta)\Theta_t\\
    &&=\int_0^t \int_\Omega (s_1(\theta) - s_1(\Theta))\Theta_t+ \int_0^t\int_\Omega \big((s_1(\Theta)\Theta)_t - e_1(\Theta)_t\big),
\end{eqnarray*}%%----------------------------%%
%and
%\begin{eqnarray*}%%%%%%%%%%%%%
 %   \int_0^t\int_\Omega (|\nabla\ln \theta|^2-|\nabla\ln\Theta|^2 )\Theta =  \int_0^t\int_\Omega (|\nabla\ln \theta-\nabla\ln\Theta|^2 )\Theta - 2 \int_0^t\int_\Omega \Delta\Theta(\ln\theta - \ln\Theta) \\
 %    \int_0^t\int_\Omega (| \nabla\theta^{\frac\beta2}|^2-|\nabla\Theta^{\frac\beta2}|^2 )\Theta= \int_0^t\int_\Omega (| \nabla\theta^{\frac\beta2}-\nabla\Theta^{\frac\beta2}|^2 )\Theta-2\frac{\frac\beta2}{\frac\beta2+1}\int_0^t\int_\Omega \Delta \Theta^{\frac\beta2+1}(\theta^{\frac\beta2}-\Theta^{\frac\beta2}).
%\end{eqnarray*}%%----------------------------%%
we conclude that \eqref{rel:ent:nonlin} holds.
\subsubsection{Proof of weak-strong uniqueness}
Now, if $(\tilde{\mathbf{u}},\tilde\theta)$ is a classical solution to the same problem with $\tilde{\mathbf{v}}_0=\mathbf{v}_0$, $\tilde{\mathbf{u}}_0=\mathbf{u}_0$ and $\tilde{\theta}_0=\theta_0$, one has
\begin{eqnarray*}%%%%%%%%%%%%%
    &&\int_\Omega\mathcal{E}(\theta\big|\tilde{\theta})(t)+ \frac12\int_\Omega|\mathbf{u}_t - \tilde{\mathbf{u}}_t|^2(t)+\frac12\int_\Omega|\nabla\mathbf{u} -
    \nabla\tilde{\mathbf{u}}|^2(t) \nonumber\\
     &&\quad + \int_0^t \int_\Omega  \frac{\kappa(\theta)|\nabla\theta|^2 }{\theta^2}\tilde\theta\\
     &&\leq   -\mu\int_0^t\int_{\Omega}(\theta-\tilde\theta) \nabla\cdot \tilde{\mathbf{u}}_t-\int_0^t \int_\Omega\left(s_1(\theta)-s_1(\tilde\theta)\right)\tilde{\theta}_t  \\
     &&\quad+\int_0^t \int_\Omega \frac{\kappa(\theta)}\theta \nabla\theta\cdot \nabla\tilde{\theta}.
\end{eqnarray*}%%----------------------------%%
First, we want to transform the RHS. We start with
\begin{eqnarray*}%%%%%%%%%%%%%
     &&\int_0^t \int_\Omega\left(s_1(\theta)-s_1(\tilde\theta)\right)\tilde{\theta}_t+\mu\int_0^t\int_{\Omega}(\theta-\tilde\theta) \nabla\cdot \tilde{\mathbf{u}}_t  \nonumber\\
     &&=\int_0^t \int_\Omega\left(s_1(\theta)-s_1(\tilde\theta)_\theta(\theta-\tilde\theta) - s_1(\tilde\theta)\right)\tilde{\theta}_t +\int_0^t \int_\Omega \underbrace{s_1(\tilde\theta)_\theta\tilde\theta_t}_{=s_1(\tilde\theta)_t}(\theta-\tilde\theta) \\
     &&\quad +\mu\int_0^t\int_{\Omega}(\theta-\tilde\theta) \nabla\cdot \tilde{\mathbf{u}}_t \\
     &&=\int_0^t \int_\Omega\left(s_1(\theta)-s_1(\tilde\theta)_\theta(\theta-\tilde\theta) - s_1(\tilde\theta)\right)\tilde{\theta}_t  \\
     &&\quad +  \int_0^t \int_\Omega(\theta-\tilde\theta)(s_1(\tilde\theta)_t+\mu\nabla\cdot \tilde{\mathbf{u}}_t).
\end{eqnarray*}%%----------------------------%%
Moreover, since
\begin{eqnarray*}%%%%%%%%%%%%%
    && \int_0^t \int_\Omega(\theta-\tilde\theta)\left(\nabla\cdot\left(\frac{\kappa(\tilde\theta)\nabla\tilde\theta}{\tilde\theta}\right)+\frac{\kappa(\tilde\theta)|\nabla\tilde\theta|^2}{\tilde\theta^2}\right)\nonumber\\
     &&= -\int_0^t \int_\Omega(\nabla\theta-\nabla\tilde\theta)\cdot\nabla\tilde\theta \frac{\kappa(\tilde\theta)}{\tilde\theta}+\int_0^t \int_\Omega(\theta-\tilde\theta)\frac{\kappa(\tilde\theta) \nabla\theta\cdot\nabla\tilde\theta}{\tilde\theta^2},
\end{eqnarray*}%%----------------------------%%
we arrive at
\begin{eqnarray*}%%%%%%%%%%%%%
    &&\int_\Omega\mathcal{E}(\theta\big|\tilde{\theta})(t)+ \frac12\int_\Omega|\mathbf{u}_t - \tilde{\mathbf{u}}_t|^2(t)+\frac12\int_\Omega|\nabla\mathbf{u} -
     \nabla\tilde{\mathbf{u}}|^2(t)\\
     &&\quad +\int_0^t \int_\Omega  \frac{\kappa(\theta)|\nabla\theta|^2 }{\theta^2}\tilde\theta-\int_0^t \int_\Omega \frac{\kappa(\theta)}\theta \nabla\theta\cdot \nabla\tilde{\theta} \nonumber\\
     &&\quad-\int_0^t \int_\Omega(\nabla\theta-\nabla\tilde\theta)\cdot \nabla\tilde\theta \frac{\kappa(\tilde\theta)}{\tilde\theta}+\int_0^t \int_\Omega(\theta-\tilde\theta)\frac{\kappa(\tilde\theta)|\nabla\tilde\theta|^2}{\tilde\theta^2}\\
     &&\leq -\int_0^t \int_\Omega\left(s_1(\theta)-s_1(\tilde\theta)_\theta(\theta-\tilde\theta) - s_1(\tilde\theta)\right)\tilde{\theta}_t.
\end{eqnarray*}%%----------------------------%%
At this point, we need some specific form for $\kappa(\theta)$ in order to obtain good estimates with the right signs. Therefore, we choose $\kappa(\theta)=1+\theta^2$. Now, we calculate the terms corresponding to the constant part of $\kappa$
\begin{eqnarray*}%%%%%%%%%%%%%
    &&\int_0^t \int_\Omega  |\nabla\ln\theta|^2\tilde\theta-\int_0^t \int_\Omega \frac{\nabla\theta\cdot\nabla\tilde\theta}\theta \nonumber\\
     &&\quad -\int_0^t \int_\Omega(\nabla\theta-\nabla\tilde\theta)\cdot\nabla\ln\tilde\theta+\int_0^t \int_\Omega(\theta-\tilde\theta)|\nabla\ln\tilde\theta|^2\\
    &&= \int_0^t \int_\Omega  |\nabla\ln\theta|^2\tilde\theta-\int_0^t \int_\Omega \tilde\theta\nabla\ln\theta\cdot\nabla\ln\tilde\theta  \nonumber\\
     &&\quad -\int_0^t \int_\Omega(\theta\nabla\ln\theta-\tilde\theta\nabla\ln\tilde\theta)\cdot\nabla\ln\tilde\theta+\int_0^t \int_\Omega(\theta-\tilde\theta)|\nabla\ln\tilde\theta|^2 \\
    &&=\int_0^t \int_\Omega  |\nabla\ln\theta-\nabla\ln\tilde\theta|^2\tilde\theta+\int_0^t \int_\Omega (\tilde\theta-\theta)\nabla\ln\theta\cdot\nabla\ln\tilde\theta \nonumber\\
     &&\quad +\int_0^t \int_\Omega(\theta-\tilde\theta)|\nabla\ln\tilde\theta|^2\\
    &&=\int_0^t \int_\Omega  |\nabla\ln\theta-\nabla\ln\tilde\theta|^2\tilde\theta + \int_0^t \int_\Omega (\tilde\theta-\theta)\nabla\ln\tilde\theta \cdot (\nabla\ln\theta-\nabla\ln\tilde\theta).
\end{eqnarray*}%%----------------------------%%
Similarly, corresponding to the second component $-\theta^2$
\begin{eqnarray*}%%%%%%%%%%%%%
    &&\int_0^t \int_\Omega  |\nabla\theta|^2\tilde\theta-\int_0^t \int_\Omega \theta\nabla\theta\cdot\nabla\tilde\theta \nonumber\\
     &&\quad -\int_0^t \int_\Omega(\nabla\theta-\nabla\tilde\theta)\cdot\nabla\tilde\theta+\int_0^t \int_\Omega(\theta-\tilde\theta)|\nabla\tilde\theta|^2\\
    &&=\int_0^t \int_\Omega  |\nabla\theta-\nabla\tilde\theta|^2\tilde\theta + \int_0^t \int_\Omega (\tilde\theta-\theta)\nabla\tilde\theta \cdot (\nabla\theta-\nabla\tilde\theta).
\end{eqnarray*}%%----------------------------%%
Thus, we finally obtain
\begin{eqnarray}%%%%%%%%%%%%%
     &&\int_\Omega\mathcal{E}(\theta\big|\tilde{\theta})(t)+ \frac12\int_\Omega|\mathbf{u}_t - \tilde{\mathbf{u}}_t|^2(t)+\frac12\int_\Omega|\nabla\mathbf{u} -
     \nabla\tilde{\mathbf{U}}|^2(t)\nonumber\\
     &&\quad+\int_0^t \int_\Omega  |\nabla\theta-\nabla\tilde\theta|^2\tilde\theta
     +\int_0^t \int_\Omega  |\nabla\ln\theta-\nabla\ln\tilde\theta|^2\tilde\theta
     \nonumber\\
     &&\leq -\int_0^t \int_\Omega\left(s_1(\theta)-s_1(\tilde\theta)_\theta(\theta-\tilde\theta) - s_1(\tilde\theta)\right)\tilde{\theta}_t  \nonumber\\
     &&\quad-\int_0^t \int_\Omega (\tilde\theta-\theta)\nabla\tilde\theta \cdot (\nabla\theta-\nabla\tilde\theta)-\int_0^t \int_\Omega (\tilde\theta-\theta)\nabla\ln\tilde\theta \cdot (\nabla\ln\theta-\nabla\ln\tilde\theta)\nonumber\\
     &&\leq \int_0^t  || \tilde\theta_t||_{L^\infty(\Omega)}\int_\Omega |(s_1(\theta)-s_1(\tilde\theta)_\theta(\theta-\tilde\theta) - s_1(\tilde\theta)| \nonumber\\
     &&\quad+ \int_0^t C(\underline{\theta})\big(|| \nabla\tilde\theta||_{L^\infty(\Omega)}+ || \nabla\ln\tilde\theta||_{L^\infty(\Omega)} \big) \int_\Omega   |\theta-\tilde\theta|^2\nonumber\\
     &&\quad+\frac{\underline{\theta}}2\int_0^t \int_\Omega  |\nabla\theta-\nabla\tilde\theta|^2
     +\frac{\underline{\theta}}2\int_0^t \int_\Omega  |\nabla\ln\theta-\nabla\ln\tilde\theta|^2 .  \label{almost:there}
\end{eqnarray}%%----------------------------%%
Note that it is enough to prove
\begin{eqnarray*}%%%%%%%%%%%%%
    &&|(s_1(\theta)-s_1(\tilde\theta)_\theta(\theta-\tilde\theta) - s_1(\tilde\theta)| \leq C\mathcal{E}(\theta\big|\tilde{\theta}) \quad \text{ and } \quad |\theta-\tilde\theta|^2 \leq C\mathcal{E}(\theta\big|\tilde{\theta}),
\end{eqnarray*}%%----------------------------%%
for $C>0$ that doesn't depend on $t$, since this implies
\begin{eqnarray*}%%%%%%%%%%%%%
    \mathcal{E}(\theta\big|\tilde{\theta})(t) \leq C\int_0^t \mathcal{E}(\theta\big|\tilde{\theta})(s)~ds,
\end{eqnarray*}%%----------------------------%%
and the conclusion follows as in section \ref{endgame:lin}. The following result will be useful:
\begin{lem}\label{lemma:fg}
Let $f,g:[a,b]\to \mathbb{R}$ be two $C^2$ functions and $s\in[a,b]$. If:
\begin{enumerate}
    \item $f(s)=g(s)=f'(s)=g'(s)=0$,
    \item $f''(s)>0$,
    \item $f(x)>0$, on $[a,b]\setminus\{s\}$,
\end{enumerate}
then there is a constant $c>0$ such that
\begin{eqnarray*}%%%%%%%%%%%%%
    f(x)\geq c|g(x)|, \quad\text{on } [a,b].
\end{eqnarray*}%%----------------------------%%
\end{lem}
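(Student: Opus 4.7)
The plan is to split $[a,b]$ into a small neighborhood $I_\delta = [a,b]\cap(s-\delta,s+\delta)$ of $s$ and its complement $K_\delta = [a,b]\setminus I_\delta$, and bound the ratio $|g(x)|/f(x)$ uniformly on each piece. Away from $s$, the argument is compactness; near $s$, it is a Taylor expansion using condition (1) to kill the zeroth and first order terms.

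First, on $K_\delta$, hypothesis (3) together with continuity of $f$ and compactness of $K_\delta$ gives $\min_{K_\delta} f =: m_\delta > 0$, while $|g|$ attains a finite maximum $M_\delta$ on $K_\delta$. Hence $|g(x)|\leq (M_\delta/m_\delta)\,f(x)$ for $x\in K_\delta$.

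Next, on $I_\delta$, Taylor's theorem combined with $f(s)=f'(s)=0$ and $g(s)=g'(s)=0$ yields
\begin{equation*}
f(x) = \tfrac{1}{2}f''(s)(x-s)^2 + o((x-s)^2), \qquad g(x) = \tfrac{1}{2}g''(s)(x-s)^2 + o((x-s)^2),
\end{equation*}
so for $\delta$ small enough (using $f''(s)>0$) one has $f(x) \geq \tfrac{1}{4}f''(s)(x-s)^2$ and $|g(x)|\leq (|g''(s)|+1)(x-s)^2$ on $I_\delta\setminus\{s\}$. Dividing gives $|g(x)|\leq C_\delta\, f(x)$ for a constant $C_\delta$ depending only on $f''(s)$ and $g''(s)$; at $x=s$ both sides vanish and the inequality is trivial.

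Choosing $c = 1/\max\{C_\delta, M_\delta/m_\delta\}$ then yields $f(x)\geq c|g(x)|$ on all of $[a,b]$. The only mild subtlety is making sure $\delta$ is fixed small enough so that the Taylor remainder estimate in $I_\delta$ really does absorb into the leading quadratic term; this is where hypothesis (2) is essential and cannot be weakened to $f''(s)\geq 0$. No other step presents a genuine difficulty.
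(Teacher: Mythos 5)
Your proof is correct and rests on the same two ingredients as the paper's: a Taylor expansion at $s$ exploiting hypotheses (1)--(2), and positivity of $f$ away from $s$ via (3) plus compactness. The only difference is organizational --- you split $[a,b]$ explicitly into a $\delta$-neighborhood of $s$ and its complement, whereas the paper factors $f(x)=\bigl(\tfrac{f''(s)}{2}+r_f(x)\bigr)(x-s)^2$ globally with a continuous remainder $r_f$ vanishing at $s$ and takes a single minimum of the coefficient over all of $[a,b]$ --- so the two arguments are essentially identical in substance.
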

\begin{proof}
By Taylor's theorem, we have
\begin{eqnarray*}%%%%%%%%%%%%%
    &&f(x)=\underbrace{f(s)+f'(s)(x-s)}_{=0}+\frac{f''(s)}2(x-s)^2+r_f(x)(x-s)^2\\
     &&=\left(\frac{f''(s)}2+r_f(x) \right)(x-s)^2, \quad x\in[a,b],
\end{eqnarray*}%%----------------------------%%
where $r_f$ is a continuous function such that $r_f(s)=0$. Now, since $f>0$ on $[a,b]\setminus\{s\}$, one has $\frac{f''(s)}2+r_f(x)>0$ on $[a,b]\setminus\{s\}$, while the continuity of $r_f$ and $r_f(s)=0$ implies that $\frac{f''(s)}2+r_f(x) >0$ on entire $[a,b]$. Denote $m=\min\limits_{x\in[a,b]}\frac{f''(s)}2+r_f(x)$. Applying Taylor's theorem to $g$, in the same way we have
\begin{eqnarray*}%%%%%%%%%%%%%
    g(x)=\left(\frac{g''(s)}2+r_g(x) \right)(x-s)^2,\quad x\in[a,b],
\end{eqnarray*}%%----------------------------%%
so denoting $M=\max\limits_{x\in[a,b]}\left|\frac{g''(s)}2+r_g(x)\right|$ and choosing $c=\frac{m}M$, we conclude
\begin{eqnarray*}%%%%%%%%%%%%%
    c|g(x)| \leq cM(x-s)^2 = m(x-s)^2 \leq \left(\frac{f''(s)}2+r_f(x) \right)(x-s)^2=f(x),
\end{eqnarray*}%%----------------------------%%
and the proof is finished.
\end{proof}
Now, applying this lemma
\begin{eqnarray*}%%%%%%%%%%%%%
    |s_1(\theta)-s_1(\tilde\theta)_\theta(\theta-\tilde\theta)-s_1(\tilde\theta)| \leq C \mathcal{E}(\theta\big|\tilde{\theta}), \quad \text{for } \theta\in \left[\underline{\theta},\overline{\theta}\right].
\end{eqnarray*}%%----------------------------%%
Note that here for each fixed $\tilde{\theta}$ there a constant $c_{\tilde\theta}$ for which this inequality holds, so our $C>0$ is chosen as $\max\limits_{\tilde\theta\in [\underline{\theta},\overline{\theta}]}c_{\tilde\theta}$. Next, we aim to control the function $|s_1(\theta)-s_1(\tilde\theta)_\theta(\theta-\tilde\theta)-s_1(\tilde\theta)|$ outside of $\left[\underline{\theta}, \overline{\theta}\right]$. First,\\
\begin{eqnarray*}%%%%%%%%%%%%%
    |s_1(\theta)-s_1(\tilde\theta)_\theta(\theta-\tilde\theta)-s_1(\tilde\theta)| \leq C(1+|\ln(\theta)|)\leq C\mathcal{E}(\theta\big|\tilde{\theta}), \quad\text{for } \theta \in \left[0,\underline{\theta}\right], \\
\end{eqnarray*}%%----------------------------%%
since the term $\ln(\theta)$ has a negative sign in $\mathcal{E}(\theta\big|\tilde{\theta})$ and the other terms are bounded, while for $\theta \geq \overline{\theta}$ the function $\theta^\alpha$ dominates $\theta^{\alpha-1}$ and $\ln\theta$ so\\
\begin{eqnarray*}%%%%%%%%%%%%%
       |s_1(\theta)-s_1(\tilde\theta)_\theta(\theta-\tilde\theta)-s_1(\tilde\theta)| \leq C(1+e_1(\theta))\leq  C\mathcal{E}(\theta\big|\tilde{\theta}), \quad \text{for } \theta \geq \overline{\theta}, \\
\end{eqnarray*}%%----------------------------%%
which together imply
\begin{eqnarray*}%%%%%%%%%%%%%
     |s_1(\theta)-s_1(\tilde\theta)_\theta(\theta-\tilde\theta)-s_1(\tilde\theta)| \leq C \mathcal{E}(\theta\big|\tilde{\theta}).
\end{eqnarray*}%%----------------------------%%
Next, to control $|\theta-\tilde\theta|^2$, we start with
\begin{eqnarray}\label{quadtratic:control}%%%%%%%%%%%%%
    |\theta-\tilde\theta|^2 \leq  C\mathcal{E}(\theta\big|\tilde{\theta}),\quad \text{for } \theta \in [0,\underline{\theta}] \cup [\underline{\theta}, \overline{\theta}],
\end{eqnarray}%%----------------------------%%
by Lemma \ref{lemma:fg}
and
\begin{eqnarray*}%%%%%%%%%%%%%
     |\theta-\tilde\theta|^2 \leq C(1+\theta^2) \leq C(1+ e_1(\theta)) \leq C\mathcal{E}(\theta\big|\tilde{\theta}),\quad \text{for } \theta \geq \overline{\theta},
\end{eqnarray*}%%----------------------------%%
where we have used the growth condition $\alpha\geq 2$, so combining the above inequalities implies
\begin{eqnarray*}%%%%%%%%%%%%%
     |\theta-\tilde\theta|^2 \leq C\mathcal{E}(\theta\big|\tilde{\theta}).
\end{eqnarray*}%%----------------------------%%
Thus, the proof is finished.

\section*{Appendix}
\begin{lem*}\label{weak:time:reg}
Let $\Omega$ be a Lipschitz domain and let $p\in (1,\infty]$. Assume that $\theta \in L^p(0,T;\mathcal{M}(\Omega))$, and that $\mathbf{u}\in L^p(0,T;H^1(\Omega))$ with $\mathbf{u}_t \in L^\infty(0,T;L^2(\Omega))$ satisfies the following equation
\begin{eqnarray*}%%%%%%%%%%%%%
     \int_0^T\int_\Omega \mathbf{u}_t \cdot \boldsymbol\varphi_t - \int_0^T\int_\Omega \mathbb{D}(\mathbf{u}): \nabla \boldsymbol\varphi + \mu\int_0^T{}_{\mathcal{M}(\Omega)}\langle\theta, \nabla\cdot\boldsymbol\varphi\rangle_{C_0(\Omega)} =-\int_\Omega \mathbf{v}_0\cdot \boldsymbol\varphi,
\end{eqnarray*}%%----------------------------%%
for all test function $\boldsymbol\varphi \in H^1(0,T;C^{\infty}_0(\Omega))$, $\vphi(T)=0$, where $\mathbf{v}_0\in L^2(\Omega)$ is given. Then, $\mathbf{u}_t \in C_w(0,T;L^2(\Omega))$.
\end{lem*}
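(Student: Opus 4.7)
The plan is to extract from the variational identity a quantitative bound on the distributional second time derivative $\mathbf{u}_{tt}$ in a certain dual space, which will give temporal continuity of $\mathbf{u}_t$ in a topology strictly weaker than the $L^2$ topology, and then to upgrade this to weak continuity into $L^2(\Omega)$ via the uniform $L^\infty(0,T;L^2(\Omega))$ bound combined with density of $C_0^\infty(\Omega)$ in $L^2(\Omega)$. This is a standard scheme for evolution equations whose second time derivative lies in a large dual space, adapted here to accommodate the measure-valued character of $\theta$.

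For the first step, I would test the variational equation against $\boldsymbol\varphi\in C_c^\infty((0,T)\times\Omega)$ (which kills the $\mathbf{v}_0$ boundary term) and reinterpret the left-hand side as the action of the distribution $-\mathbf{u}_{tt}$ on $\boldsymbol\varphi$. Using the hypotheses $\mathbf{u}\in L^p(0,T;H^1(\Omega))$ and $\theta\in L^p(0,T;\mathcal{M}(\Omega))$, the right-hand side is estimated by
\begin{equation*}
\bigl(\|\mathbf{u}\|_{L^p(0,T;H^1(\Omega))}+\mu\|\theta\|_{L^p(0,T;\mathcal{M}(\Omega))}\bigr)\|\boldsymbol\varphi\|_{L^{p^*}(0,T;W^{1,\infty}_0(\Omega))}.
\end{equation*}
This shows that $\mathbf{u}_{tt}\in L^{p^*}(0,T;Y)$ where $Y$ is the dual of the closure of $C_0^\infty(\Omega)$ in the $W^{1,\infty}$-norm (so $L^2(\Omega)\hookrightarrow Y$). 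Since $p^*\ge 1$, we get $\mathbf{u}_{tt}\in L^1(0,T;Y)$ and hence, after identification with a continuous representative, $\mathbf{u}_t\in C([0,T];Y)$. Equivalently, for every $\boldsymbol\psi\in C_0^\infty(\Omega)$, the scalar function $t\mapsto \int_\Omega \mathbf{u}_t(t,\cdot)\cdot\boldsymbol\psi$ is continuous on $[0,T]$.

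For the second step, fix any $\boldsymbol\psi\in L^2(\Omega)$, and given $\varepsilon>0$ choose $\boldsymbol\psi_\varepsilon\in C_0^\infty(\Omega)$ with $\|\boldsymbol\psi-\boldsymbol\psi_\varepsilon\|_{L^2(\Omega)}<\varepsilon$. For $t_1,t_2$ at which $\mathbf{u}_t$ is defined in $L^2(\Omega)$, split
\begin{equation*}
\int_\Omega(\mathbf{u}_t(t_1)-\mathbf{u}_t(t_2))\cdot\boldsymbol\psi
=\int_\Omega(\mathbf{u}_t(t_1)-\mathbf{u}_t(t_2))\cdot(\boldsymbol\psi-\boldsymbol\psi_\varepsilon)
+\int_\Omega(\mathbf{u}_t(t_1)-\mathbf{u}_t(t_2))\cdot\boldsymbol\psi_\varepsilon.
\end{equation*}
The first term is bounded by $2\|\mathbf{u}_t\|_{L^\infty(0,T;L^2(\Omega))}\,\varepsilon$, while the second tends to $0$ as $|t_1-t_2|\to 0$ by the continuity established in the first step. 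Hence $t\mapsto \int_\Omega \mathbf{u}_t(t)\cdot\boldsymbol\psi$ is Cauchy as $t_2\to t_1$, and extends uniquely to a continuous function on $[0,T]$. The extended functional $\boldsymbol\psi\mapsto \lim_{t_2\to t}\int_\Omega \mathbf{u}_t(t_2)\cdot\boldsymbol\psi$ is linear and satisfies the uniform bound $\|\cdot\|\leq \|\mathbf{u}_t\|_{L^\infty(0,T;L^2(\Omega))}\|\boldsymbol\psi\|_{L^2(\Omega)}$, so by the Riesz representation theorem it is realized by a unique element $\mathbf{u}_t(t)\in L^2(\Omega)$ defined at every $t\in[0,T]$. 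The resulting map $t\mapsto \mathbf{u}_t(t)$ is then weakly continuous into $L^2(\Omega)$, as required.

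The main technical obstacle is choosing the test function space correctly: since $\theta$ is merely a measure, one cannot test against $H_0^1(\Omega)$ functions, but must use compactly supported $C^1$ (or $C^\infty$) functions so that $\nabla\cdot\boldsymbol\varphi$ lies in $C_0(\Omega)$ and the duality pairing with $\mathcal{M}(\Omega)$ is well-defined. Once this is set up, the dual-space continuity together with the density of $C_0^\infty(\Omega)$ in $L^2(\Omega)$ and the uniform $L^\infty(0,T;L^2(\Omega))$ bound combine routinely to yield the weak continuity statement.
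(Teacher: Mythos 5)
Your proposal is correct and follows essentially the same route as the paper's proof: bound the distributional $\mathbf{u}_{tt}$ in a dual space of smooth compactly supported functions to get continuity of $t\mapsto\int_\Omega\mathbf{u}_t(t)\cdot\boldsymbol\psi$ for smooth $\boldsymbol\psi$, then use the $L^\infty(0,T;L^2(\Omega))$ bound, density, and Riesz representation to upgrade to weak continuity in $L^2(\Omega)$. The only (immaterial) difference is that you control the test functions in the $W^{1,\infty}$ norm where the paper uses $W^{2,\infty}$.
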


\begin{proof}
First, let us prove that . Since $\theta\in L^p(0,T;\mathcal{M}(\Omega))$, one has from the equation for every $\boldsymbol\varphi\in C_0^\infty((0,T)\times \Omega)$ that
\begin{eqnarray*}%%%%%%%%%%%%%
    \left|\int_0^T\int_\Omega\mathbf{u}_{tt}\cdot\boldsymbol\varphi \right|\leq \big(||\mathbf{u}||_{L^p(0,T;H^1(\Omega))}+||\theta||_{L^p(0,T;\mathcal{M}^+(\Omega))}\big)||\boldsymbol\varphi||_{L^{p^*}(0,T; W^{2,\infty}(\Omega))},
\end{eqnarray*}%%----------------------------%%
so $\mathbf{u}_{tt} \in L^{p^*}(0,T; [C_0^\infty(\Omega)]')$, and consequently $\mathbf{u}_t\in C(0,T; [C_0^\infty(\Omega)]')$.

%$\int_\Omega\mathbf{u}_t\cdot\boldsymbol\varphi \in AC(0,T)$, which stands for the space of absolutely continuous functions in time. 
Now, for an arbitrary $\boldsymbol\varphi\in L^{2}(\Omega)$, let $\boldsymbol\varphi_n\in C_0^\infty(\Omega)$ be such that $||\boldsymbol\varphi_n-\boldsymbol\varphi||_{L^{2}(\Omega)}\to 0$, as $n\to \infty$. Then, for almost all $t_1, t_2$, one has
\begin{eqnarray*}%%%%%%%%%%%%%
    &&\int_\Omega(\mathbf{u}_t(t_1)-\mathbf{u}_t(t_2))\cdot\boldsymbol\varphi\\
    &&=\int_\Omega(\mathbf{u}_t(t_1)-\mathbf{u}_t(t_2))\cdot(\boldsymbol\varphi-\boldsymbol\varphi_n)+\int_\Omega(\mathbf{u}_t(t_1)-\mathbf{u}_t(t_2))\cdot\boldsymbol\varphi_n,
\end{eqnarray*}%%----------------------------%%
so
\begin{eqnarray*}%%%%%%%%%%%%%
     &&\lim\limits_{t_2\to t_1}\left|\int_\Omega(\mathbf{u}_t(t_1)-\mathbf{u}_t(t_2))\cdot\boldsymbol\varphi\right|\nonumber\\
     &&\leq \lim\limits_{t_2\to t_1}\left|\int_\Omega(\mathbf{u}_t(t_1)-\mathbf{u}_t(t_2))\cdot(\boldsymbol\varphi-\boldsymbol\varphi_n)\right|+ \underbrace{\lim\limits_{t_2\to t_1}\left|\int_\Omega(\mathbf{u}_t(t_1)-\mathbf{u}_t(t_2))\cdot\boldsymbol\varphi_n\right|}_{=0}\nonumber\\ 
     &&\leq 2||\mathbf{u}_t||_{L^\infty(0,T;L^2(\Omega))}||\boldsymbol\varphi-\boldsymbol\varphi_n||_{L^2(\Omega)} \to 0, \quad \text{as }n\to\infty. 
\end{eqnarray*}%%----------------------------%%
Now, we can give meaning to $ \int_\Omega\mathbf{u}_t(t)\cdot\boldsymbol\varphi$ for all $t$ by constructing a sequence of $t_n$ such that $\int_\Omega\mathbf{u}_t(t_n)\cdot\boldsymbol\varphi$ is finite for all $n$ and $t_n\to t$ as $n\to\infty$. From the above inequality, $\int_\Omega\mathbf{u}_t(t_n)\cdot\boldsymbol\varphi$ forms a Cauchy sequence which converges to a unique limit $a_{\boldsymbol\varphi}\in \mathbb{R}$ for any $\boldsymbol\varphi$, independent of the choice of the sequence $\{t_n\}_{n\in\mathbb{N}}$. Now, since $\left|\int_\Omega\mathbf{u}_t(t_n)\cdot\boldsymbol\varphi \right|\leq C||\varphi||_{L^2(\Omega)}$, one has that $|a_{\boldsymbol\varphi}|\leq C||\varphi||_{L^2(\Omega)}$. This combined with linearity of $\boldsymbol\varphi \mapsto a_{\boldsymbol\varphi}$ allows us to identify $a_{\boldsymbol\varphi}$ with a function $\mathbf{u}_t(t)\in L^2(\Omega)$, by the Riesz theorem. The proof is now finished.\\
\end{proof}

{\bf Acknowledgements.} We thank an anonymous Referee for several useful suggestions and comments.
\\
T.C. was supported by the National Science Center of Poland grant SONATA BIS 7 number UMO-2017/26/E/ST1/00989. B.M. was partially supported by the Croatian Science Foundation (Hrvatska Zaklada za Znanost) grant number IP-2018-01-3706. S.T. was supported by Provincial Secretariat for Higher Education and Scientific Research of Vojvodina, Serbia, grant no 142-451-2593/2021-01/2.


\begin{thebibliography}{99}

\bibitem{AV}
R. Alexandre, C. Villani, On the Boltzmann equation for long-range interactions. Comm. Pure Appl. Math. \textbf{55} (2002), 30-70.

\bibitem{solidstate}
N.W. Ashcroft, N.D. Mermin, Solid State Physics, Harcourt, 1976.


\bibitem{Aubin}
J.-P. Aubin, Un th\'{e}or\`eme de compacit\'{e}, Comptes Rendus Hebdomadaires des S\'{e}ances de l'Acad\'{e}mie des Sciences 256 (1963), 5042-5044.

\bibitem{BC-CA}
H. Bauschke and P. Combettes, Convex analysis and monotone operator theory in Hilbert spaces, Springer, 2011.
\bibitem{CD}
G.-Q. Chen, C. M. Dafermos, The vanishing viscosity method in one-dimensional thermoelasticity, Trans. Amer. Math. Soc. \textbf{347} (1995), 531-541.
\bibitem{chipot}
M. Chipot (ed.), Handbook of Differential Equations: Stationary Partial Differential Equations, Volume 6, 2008, Elsevier.
\bibitem{chrtzav}
C. Christoforou, A. Tzavaras,  Relative entropy for hyperbolic-parabolic systems and application to the constitutive theory of thermoviscoelasticity.
Arch. Ration. Mech. Anal. \textbf{229} (2018), 1-52.
\bibitem{chrmytz}
C. Christoforou, M. Galanopoulou,  A. Tzavaras, Measure-valued solutions for the equations of polyconvex adiabatic thermoelasticity. Discr. Contin. Dyn. Syst. \textbf{39} (2019),  6175-6206.
\bibitem{chrmytz2}
C. Christoforou, M. Galanopoulou,  A. Tzavaras, A discrete variational scheme for isentropic processes in polyconvex thermoelasticity. Calc. Var. Partial Differential Equations. \textbf{59} (2020), Paper No. 122, 34 pp.
\bibitem{CGM}
T. Cie\'slak, M. Gali\'c, B, Muha, A model in one-dimensional thermoelasticity. Nonlin. Anal. \textbf{216} (2022), Paper No. 112703, 21 pp.
\bibitem{D}
C. M. Dafermos, On the existence and the asymptotic stability of solutions to the equations of linear thermoelasticity. Arch. Ration. Mech. Anal. \textbf{29} (1968), 241-271.
\bibitem{DafHsiao1} C. Dafermos and L. Hsiao, Global smooth thermomechanical processes in one-dimensional nonlinear thermoviscoelasticity. Nonlin. Anal. {\bf 6} (1982), 435-454.
\bibitem{DafHsiao}
C. Dafermos and L. Hsiao, Development of singularities in solutions of the equations of nonlinear thermoelasticity. Quarterly Appl. Math. {\bf 44} (1986), 463-474.
\bibitem{DL}
R.J. DiPerna, P.-L. Lions. On the Fokker-Planck-Boltzmann equation. Commun. Math. Phys. \textbf{120} (1988), 1–23.
\bibitem{FNWSUNSF}
E. Feireisl, A. Novotny, Weak–Strong Uniqueness Property for the Full Navier–Stokes–Fourier System. Arch. Ration. Mech. Anal. \textbf{204} (2012), 683–706.
\bibitem{FN}
E. Feireisl, A. Novotny, Singular limits in thermodynamics of viscous fluids. Advances in Mathematical Fluid Mechanics, 2009, Birkh\"{a}user.
\bibitem{gala}
M. Galanopoulou, A. Vikelis, K. Koumatos, Weak-strong uniqueness for measure-valued solutions to the equations of quasiconvex adiabatic thermoelasticity.  Commun. Partial. Differ. Equ. {\bf 47} (2022), 1133-1175.
\bibitem{jiang}
S. Jiang, Global existence and asymptotic behavior of smooth solutions in one-dimensional thermoelasticity. Proc. Roy. Soc. Edinburgh {\bf 115 A} (1990), 257-274.
\bibitem{racke2000evolution}
S. Jiang, R.Racke, Evolution equations in thermoelasticity, Chapman and Hall/CRC, 2000.
\bibitem{jr}
S. Jiang, R.Racke, On some quasilinear hyperbolic-parabolic initital boundary value problems, Math. Methods Appl. Sci. \textbf{12} (1990), 315-339.
\bibitem{lady}
O.A. Ladyzhenskaya, V.A. Solonnikov, and N.N. Uralceva, Linear and quasilinear equations of parabolic type. Trans. Math. Monograph 23, Amer. Math. Soc., Providence, 1968.
\bibitem{Lagnese}
J. E. Lagnese, Boundary stabilization of thin plates, SIAM Studies in Applied Mathematics, 10, Society for Industrial and Applied Mathematics (SIAM), Philadelphia, PA, 1989.
\bibitem{LT00}
I. Lasiecka, R. Triggiani, Control theory for partial differential equations: continuous and approximation theories. {I}, Encyclopedia of Mathematics and its Applications 74, Cambridge University Press, Cambridge, 2000.
\bibitem{LZ98}
G. Lebeau, E. Zuazua, Null-controllability of a system of linear thermoelasticity, Arch. Rational Mech. Anal. {\bf 141} (1998), 297-329.
\bibitem{murat}
F. Murat, Compacit\'{e} par compensation, {\em Ann. Sc. Norm. Sup. Pisa, Cl. Sci.}, {\bf 4(V)} (1978), 489–507.
\bibitem{necas}
J. Ne\v{c}as, About domains of the type $\mathfrak{R}$ (Russian), Czechoslovak Math. J. {\bf 87} (1962), 274-287.
\bibitem{racke}
R. Racke, Initial boundary value problems in one-dimensional non-linear thermoelasticity. Math. Meth. Appl. Sci. {\bf 10} (1988), 517-529.
\bibitem{rashi}
R. Racke and Y. Shibata, Global smooth solutions and asymptotic stability in one-dimensional thermoelasticity. Arch. Ration. Mech. Anal. {\bf 116} (1991), 1-34.
\bibitem{RackeZheng}
R. Racke, S. Zheng, Global existence and asymptotic behavior in nonlinear thermoviscoelasticity, J. Differential Equations {\bf 134} (1997), 46-67.






\bibitem{ross} R. G. Ross. Thermal Conductivity and Disorder in Nonmetallic Materials, Physics and Chemistry of Liquids, \textbf{23} (1991), 189-210.

\bibitem{Roub} 
T. Roub\'{\i}\v{c}ek, Nonlinear partial differential equations with applications, International Series of Numerical Mathematics Vol. {\bf 153}, Birkh\"{a}user Verlag, Basel, 2005.

\bibitem{oxword}
S. H. Simon, The Oxford Solid State Basics, Oxford University Press, 2011.


\bibitem{serrin96}
J. Serrin, The equations of continuum mechanics and the laws of thermodynamics, Meccanica \textbf{31} (1996), 547--563.


\bibitem{slemrod}
M. Slemrod, Global existence, uniqueness and asymptotic stability of classical smooth solutions in one-dimensional nonlinear thermoelasticity. Arch. Ration. Mech. Anal. {\bf 76} (1981), 97-133.





\end{thebibliography}
\end{document}